\documentclass{article}

\usepackage{arxiv}

\usepackage[utf8]{inputenc} 
\usepackage[T1]{fontenc}    
\usepackage{hyperref}       
\usepackage{url}            
\usepackage{booktabs}       
\usepackage{amsfonts}       
\usepackage{nicefrac}       
\usepackage{microtype}      
\usepackage{graphicx}


\usepackage{enumerate}
\usepackage{amsopn, amstext, amscd, amsfonts, amssymb, amsthm}
\usepackage{url, hyperref, verbatim}
\usepackage{etex}

\usepackage{afterpage}
\usepackage{pdflscape}
\usepackage{multirow}
\usepackage[short]{optidef}
\usepackage{pifont}

\usepackage{amssymb, amsmath}

\newtheorem{theorem}{Theorem}[section]

\newtheorem{lemma}[theorem]{Lemma}

\newtheorem{rule-thm}[theorem]{Rule}

\title{A revisited branch-and-cut algorithm for large-scale orienteering problems}

\author{ \href{https://orcid.org/0000-0002-8669-4482}{\includegraphics[scale=0.06]{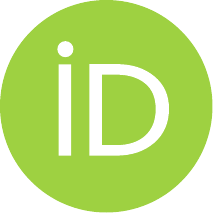}\hspace{1mm}Gorka Kobeaga} \\
  Basque Center for Applied Mathematics BCAM\\
  \texttt{gkobeaga@bcamath.org} \\
  \And
  \href{https://orcid.org/0000-0002-4947-2784}{\includegraphics[scale=0.06]{orcid.pdf}\hspace{1mm}Mar\'ia Merino} \\
  University of the Basque Country UPV/EHU\\
  \texttt{maria.merino@ehu.eus} \\
  \And
  \href{https://orcid.org/0000-0002-4683-8111}{\includegraphics[scale=0.06]{orcid.pdf}\hspace{1mm}Jose A. Lozano} \\
  Basque Center for Applied Mathematics BCAM\\
  University of the Basque Country UPV/EHU\\
  \texttt{jlozano@bcamath.org} \\
}

\hypersetup{
  pdftitle={A revisited branch-and-cut algorithm for large-scale orienteering problems},
  pdfsubject={math.OC},
  pdfauthor={G. Kobeaga, M. Merino, J.A. Lozano},
  pdfkeywords={orienteering problem, branch-and-cut, large problems}
}

\begin{document}
\maketitle

\begin{abstract}
  The orienteering problem is a route optimization problem which consists in finding a simple cycle that maximizes the total collected profit subject to a maximum distance limitation. In the last few decades, the occurrence of this problem in real-life applications has boosted the development of many heuristic algorithms to solve it. However, during the same period, not much research has been devoted to the field of exact algorithms for the orienteering problem. The aim of this work is to develop an exact method which is able to obtain optimality certification in a wider set of instances than with previous methods, or to improve the lower and upper bounds in its disability.

  We propose a revisited version of the branch-and-cut algorithm for the orienteering problem which includes new contributions in the separation algorithms of inequalities stemming from the cycle problem, in the separation loop, in the variables pricing, and in the calculation of the lower and upper bounds of the problem. Our proposal is compared to three state-of-the-art algorithms on 258 benchmark instances with up to 7397 nodes. The computational experiments show the relevance of the designed components where 18 new optima, 76 new best-known solutions and 85 new upper-bound values were obtained.
\end{abstract}

\keywords{orienteering problem \and branch-and-cut \and large problems}

\begin{section}{Introduction}
  The Orienteering Problem (OP) is a well-known routing problem proposed in the 80s, see~\cite{tsiligirides1984} and~\cite{GoLeVo87}. Given a weighted complete graph with vertex profits
  and a constant $d_0$, the goal is to find the cycle which, with a length lower than or equal to $d_0$,  maximizes the sum of profit of the visited vertices. In addition to the length constraint, every feasible cycle solution must visit a given  depot vertex.

  The OP can be seen as a combination of the Knapsack Problem (KP) and the Travelling Salesperson Problem (TSP). Given a set of items with an assigned weight and profit and a constant $w_0$, the goal in KP is to find the subset of items which, with a total weight  lower than or equal to $w_0$, maximizes the sum of the profit of subset items. In the KP, the feasibility of a subset is checked in linear time. In the OP, however, the feasibility of a solution is checked by solving a TSP-decision problem. A subset of vertices is feasible if there exists a cycle (Hamiltonian in the subgraph obtained by the vertices) whose length does not exceed $d_0$, finding such a cycle is an NP-complete problem. This simple but non-trivial combination of two NP-hard problems makes the OP an interesting problem to study.

  The occurrence of the OP in many real-life applications, such as logistics and tourism, has boosted the emergence of many variants and algorithms to solve the problem over the last decades. A survey of OP variants, approaches, and applications can be seen in~\cite{Vansteenwegen2019}. In this paper, we have focused on solving the classical problem through an exact algorithm.

  Some exact algorithms have been proposed for the OP, see \cite{LAPORTE1990193}, \cite{ramesh1992}, \cite{leifer1994}, and \cite{Gendreau1998ABA}. The most competitive approach thus far was proposed by~\cite{fischetti5} two decades ago. To our knowledge, no exact algorithm for the classical OP has been published after this work. The first exact algorithm, a Branch-and-Bound (B\&B) algorithm, was published in~\cite{LAPORTE1990193} where bounds for the problem were provided based on the Knapsack relaxation of the OP\@. In~\cite{ramesh1992}, new bounds for the B\&B were obtained by Lagrangian relaxation. In~\cite{leifer1994} a Branch-and-Cut (B\&C) algorithm was proposed, which included logical, connectivity, and cover cuts for the first time. In~\cite{Gendreau1998ABA}~a B\&C was proposed for a variant of the OP which considers multiple depot nodes.
  The B\&C approach in~\cite{fischetti5} outperformed the previous ones in middle-sized OP instances by considering column generation, new valid inequalities (cycle cover and path inequalities), problem-specific separation algorithms, and an efficient primal heuristic.
  In the last two decades, authors dealing with exact approaches have focused on solving variants of the problem, such as Team OP~(\cite{boussier2007}, \cite{poggi2010}, \cite{dang2013}, \cite{keshtkaran2015}, \cite{bianchessi2018}), Arc OP~(\cite{archetti2016}), Team Arc OP~(\cite{archetti2014}, \cite{rieraledesma2017}) and Probabilistic OP~(\cite{ANGELELLI2017269}).

  Recent results for large-sized instances of the OP, presented in~\cite{kobeaga2018} and~\cite{santini2019}, have shown that the state-of-the-art B\&C algorithm in~\cite{fischetti5} does not obtain satisfactory results when the number of nodes is larger than 400. In half of the large-sized instances, the named B\&C algorithm does not produce any output. In many of the other half of instances, the returned solution value is far from the values obtained by the heuristic algorithms.
  The motivation of this work is to develop a B\&C algorithm which is able to improve the values of the best-known lower and upper bounds in the literature, and if possible, to obtain optimality certifications in a wider set of instances than the state-of-the-art B\&C algorithm.
  In our view, there is room for improvement for B\&C algorithms in the case of OP, mainly if we consider that some of the successful techniques developed for the TSP, such as shrinking and efficient pricing, have not yet been adapted for the OP\@. This paper is an attempt to combine the adaptation of some of those techniques with our OP specific contributions.

  In this work, we have developed and adapted techniques to scale the B\&C algorithm to large OP problems. Our main contributions in this paper are the following:

  \begin{itemize}
    \item Develop a new joint separation algorithm for Subcycle Elimination Constraints and Connectivity Constraints, which efficiently uses the shrinking techniques in \cite{kobeaga2020} to speed up the algorithm by reducing the adverse effects of the shrinking for Connectivity Constraints (Section ~\ref{sec:seccc}).
    \item Design blossom separation algorithms for Cycle Problems, which generalize the \cite{Padberg1980} and \cite{Grotschel1991} heuristics, providing a considerable improvement in the solution quality and running time (Section \ref{sec:blossom}).
    \item Design an efficient variable pricing procedure for the OP inspired by the one developed in \cite{concorde} for the TSP. It enables repetitive calculations to be avoided and the exact calculation of the reduced cost of some variables to be skipped (Section \ref{sec:pricing}).

    \item Conceive a three-level separation loop for the OP, which treats the cuts according to their relevance
      while giving the same chance to the cuts with complementary incidence. This significantly reduces the running time of the B\&C algorithm (Section \ref{sec:sep}).

    \item Devise a combination of two alternative primal heuristics, a greedy one (\cite{fischetti5}) in the separation loop and a metaheuristic based one (\cite{kobeaga2018}) at the beginning of branch nodes. The new combination boosts the quality of the obtained solutions in large-sized problems (Section \ref{sec:heur}).

    \item Formulate the computation of the global upper bound in the branching phase for the OP, which enables the upper bound obtained in the branching root node to be updated (Section \ref{sec:branching}).
  \end{itemize}

  The computational experiments presented in this paper show the importance of the proposed techniques and components for the B\&C algorithm for the OP. We compare our B\&C algorithm with three state-of-the-art algorithms, two heuristics (\cite{kobeaga2018} and~\cite{santini2019}) and one exact (\cite{fischetti5}), on 258 benchmark instances with up to 7397 nodes.
  In comparison to the literature, the revisited B\&C obtains:

  \begin{itemize}
    \item 180 optimum values, from which 18 are new.
    \item 245 best-known solution values, from which 76 are new.
    \item 249 best-known upper-bound values, from which 85 are new.
  \end{itemize}

  The rest of the paper is organized as follows. In Section~\ref{sec:modelling} the 0{-}1 Integer Linear model of the OP is introduced. In Section~\ref{sec:valid} we present the valid inequalities for the OP\@. In Section~\ref{sec:bac} we detail the proposed B\&C algorithm for the problem. In Section~\ref{sec:exp} the results of the computational experiments are shown. The detailed experimental results can be found in the appendices.

  \section{OP Modelling and Polyhedral Considerations}\label{sec:modelling}
  The OP can be defined by a 5-tuple $\langle G, d, s, 1, d_0 \rangle$, where $G=K_n= ( V, E )$ is a complete graph with vertex set $V$ and edge set $E$; $d=(d_e)$ where $d_e$ is the positive distance value (time or weight) associated to each $e \in E$; $s=(s_v)$, where $s_v$ is a positive value that represents the score (profit) of vertex $v \in V$; $1 \in V$ is a vertex selected as the depot; and $d_0$ is a positive value that limits the cycle length.

  Let us define the following sets:
  \begin{subequations}
    \begin{align}
      (Q:W) & : = \{[u, v] \in E:  u \in Q, v \in W\} & Q, W \subset V \\
      \delta(Q) & : = (Q:V-Q) & Q \subset V \\
      E(Q) & : = (Q:Q) & Q \subset V  \\
      V(T) & : = \{v \in V:  T \cap (v : V) \neq \emptyset\} & T \subset E
    \end{align}
  \end{subequations}
  \noindent where  $(Q:W)$ are the edges connecting $Q$ and $W$, $\delta(Q)$ is the set of edges in the coboundary of $Q$ also known as the star-set of $Q$, $E(Q)$ is the set of edges between the vertices of $Q$, and $V(T)$ is the set of vertices incident with an edge set $T$. For simplicity, we sometimes denote $\{e\}$ and $\{v\}$ by $e$ and $v$, respectively, e.g., $\delta(v)$ and $V(e)$.

  We denote by $\mathbb{R}^{V}$, and $\mathbb{R}^{E}$, the space of real vectors whose components are indexed by elements of $V$, and $E$ respectively.
  In the model of OP, two types of decision variables are used, $y=(y_v) \in \mathbb{R}^{V}$ and $x=(x_e) \in \mathbb{R}^{E}$, associated with the nodes and edges of G, respectively, where:

  \begin{equation}
    y_v = \left\{ \begin{aligned}
        1 & \quad \; \mbox{if node $v$ is visited} \\
        0 & \quad \; \mbox{otherwise}
    \end{aligned} \right. \notag
    \quad
    x_{e}= \left\{ \begin{aligned}
        1 & \quad \;\mbox{if edge $e$ is traversed } \\
        0 & \quad \; \mbox{otherwise}
    \end{aligned} \right.
  \end{equation}

  \noindent For $(y,x) \in \mathbb{R}^{V \times E} $, $S\subset V$ and $T\subset E$, we define $y(S) = \sum_{v \in S} y_v$ and $x(T) = \sum_{e \in T} x_e$.

  The OP goal is to determine a simple cycle that maximizes the sum of the scores of the visited vertices, such that it contains the depot node $1 \in V$ and whose length is equal to or lower than the distance limitation, $d_0$. Then, the OP can be formulated as the following 0-1 Integer Linear model:

  \begin{maxi!}
    {}{\sum_{v \in V} s_v y_v\label{op:f}}
    {\label{op}}{}
    \addConstraint{\sum_{e \in E} d_e x_e}{\leq d_0 \label{op:d0}}
    \addConstraint{x(\delta(v)) - 2y_v}{=0\label{op:deg},}{ \qquad v \in V}
    \addConstraint{x(\delta(H)) - 2y_l-2y_r}{\geq -2\label{op:sec},}{ \qquad l \in H \subset V, r \in V-H }
    \addConstraint{}{\notag}{ \qquad  3 \leq |H| \leq |V|-3\; }
    \addConstraint{y_v - x_e}{\geq 0\label{op:log},}{ \qquad v \in V, e \in \delta(v)}
    \addConstraint{ 0 \leq y_v}{ \leq 1,\label{op:vbnd}}{ \qquad v\in V}
    \addConstraint{ 0 \leq x_e}{ \leq 1,\label{op:ebnd}}{ \qquad e\in E}
    \addConstraint{y_1}{= 1\label{op:depot}}{}
    \addConstraint{  x_e}{ \in \mathbb{Z}\label{op:eint}}{ \qquad e\in E}
  \end{maxi!}

  \noindent where the objective function~\eqref{op:f} is to maximize the total collected profit. The constraint~\eqref{op:d0} limits the total cycle length.
  The degree equations~\eqref{op:deg}, together with the logical constraints~\eqref{op:log} and the integrality constraints~\eqref{op:eint}, ensure that the visited vertices have exactly two incident edges and the unvisited vertices none.
  The Subcycle Elimination Constraints (SEC)~\eqref{op:sec} ensure that only one connected cycle exists. Throughout the paper, we use the notation $\langle H, l, r\rangle$ for the SEC defined by the set $H \subset V$ and the vertices $l\in H$ and $r\notin H$.  The constraints \eqref{op:ebnd} and~\eqref{op:eint} impose that the edge variables are 0-1, consequently, considering these together with the Logical Constraints  \eqref{op:log} and the bounds \eqref{op:ebnd}, the vertex variables are also 0-1. The constraint~\eqref{op:depot} defines the depot condition.

  As mentioned in the introduction, the OP can be seen as a combination of the TSP-decision and the KP problems. Particularly, the OP is a Cycle Problem (CP) where the solutions, which are cycles, need to satisfy a certain length constraint. This relation with the two classical optimization problems is useful when identifying the valid inequalities and their respective separation algorithms for OP. Let us show how the solution space of OP is related to those well-known problems. The OP Polytope ($P_{OP}$) of the complete graph $K_n$ is defined by:
  \begin{equation}
    P_{{OP}} := conv\{(y,x) \in \mathbb{R}^{V\times E}: (y,x)\text{~satisfies~\eqref{op:d0},~\eqref{op:deg},~\eqref{op:sec},~\eqref{op:log},~\eqref{op:vbnd},~\eqref{op:ebnd},~\eqref{op:depot},~\eqref{op:eint}}\}
  \end{equation}

  The Knapsack Polytope ($P_{{KP}}$), see \cite{balas1975}, is a well-studied polytope closely related to the $P_{OP}$:
  \begin{equation}
    P_{KP} := conv\{x \in \mathbb{R}^{E}: x\text{~satisfies~\eqref{op:d0},~\eqref{op:ebnd},~\eqref{op:eint}}\}
  \end{equation}

  Since the solutions of the OP are cycles, the Cycle Polytope ($P_{CP}$), plays a crucial role when solving the OP with B\&C. Based on~\cite{bauer1997}, the $P_{CP}$ can be characterized as:
  \begin{equation}
    P_{{CP}}  :=  conv \{(y,x) \in \mathbb{R}^{V \times E}: (y,x) \text{~satisfies~\eqref{op:deg},~\eqref{op:sec}, $x(E) \geq 3$,~\eqref{op:vbnd},~\eqref{op:ebnd},~\eqref{op:eint}}\}
  \end{equation}

  We have the following relationship:
  \begin{equation}
    P_{OP} \subset P_{{CP}} \cap ( \mathbb{R}^{V} \times P_{{KP}}) \cap \{(y,x) \in \mathbb{R}^{V \times E}: y_1 = 1 \}  \label{cp:incl}
  \end{equation}

  Consequently, the potential valid inequalities for the OP are those which are valid for $P_{{CP}}$ and the $P_{{KP}}$.
  However, the $P_{OP}$ and the intersected polytopes in the relationship~\eqref{cp:incl} are not equal and alternative valid inequalities
  are needed to deal with the OP. An example of a point in $P_{CP} \cap ( \mathbb{R}^{V} \times P_{{KP}} ) \cap \{(y,x): y_1 = 1 \}$ but not in $P_{OP}$ is given in Figure 2 of~\cite{fischetti5}.

\end{section}

\begin{section}{Valid Inequalities}\label{sec:valid}
  In this section, we present valid inequalities for the $OP$. The straightforward inequalities, as motivated in Section $\ref{sec:modelling}$, are based on the $P_{KP}$ (Edge Cover inequalities) and $P_{CP}$ (Comb inequalities) relaxations of the $P_{OP}$ and they were mainly proposed in~\cite{fischetti5} and~\cite{Gendreau1998ABA}.
  Additional valid inequalities to those based on $P_{KP}$ and $P_{CP}$ have also been proposed in the literature: the Connectivity Constraints in~\cite{leifer1994}, the Vertex Cover inequalities in~\cite{Gendreau1998ABA}, and the Cycle Cover and the Path inequalities in~\cite{fischetti5}. The novelty of this section is an alternative representation of comb inequalities, which is then used for the efficient pricing in Section~\ref{sec:pricing}.

  \begin{subsection}{Connectivity Constraints}
    The Connectivity Constraints (CC) are well-known inequalities for the OP, e.g.~\cite{Gendreau1998ABA} and \cite{leifer1994}, and are a particular case of the conditional cuts proposed in~\cite{fischetti5}. The CCs exploit the depot constraint~\eqref{op:depot}.
    Given a lower bound, LB, of the OP, let $T$ be a subset of nodes such that $1 \in T$, $|T| \geq 2$ and  $\sum_{v \in T} s_v \leq LB$. The inequality defined by $T$
    \begin{equation}
      x(\delta(T)) \geq 2 \label{valid:cc}
    \end{equation}
    \noindent is valid for the OP\@. Since $x(\delta(T)) = x(\delta(V-T))$, the inequality can also be defined for $T\subset V$ such that $1 \notin T$ and  $\sum_{v \notin T} s_v \leq LB$. So, it is always possible to assume that $|T| \leq |V|/2$.

    \begin{subsection}{Comb Inequalities}
      The comb inequalities
      were generalized from the TSP to cycle problems in~\cite{bauer1997}.
      A comb is a tuple $\langle H, \{ T_1, \ldots, T_t \}, L, R \rangle$ of three vertex subsets and a family $\mathcal{T}=\{ T_1, \ldots, T_t \}$ of vertex subsets such that satisfies the following properties:
      \begin{enumerate}[i)]
        \item $t\geq 3$ and an odd integer
        \item $T_i \cap T_j = \emptyset$ for $1 \leq i < j \leq t$
        \item $T_i \cap H \neq \emptyset$ and $T_i  - H \neq \emptyset$ for $i =1 ,\ldots, t$
        \item  $L=\{l_i\}$ such that $l_i \in T_i \cap H$ for $i =1 ,\ldots, t$
        \item  $R=\{r_i\}$ such that $r_i \in T_i - H$ for $i =1 ,\ldots, t$
      \end{enumerate}

      The set $H$ is called the handle, the sets in $\mathcal{T}$ are called the teeth, the set $R$ is called the {\em Root\/} set, and $L$ is called the {\em Link\/} set. Then, the inequality

      \begin{equation}
        x(\delta(H)) + \sum_{j=1}^t x(\delta(T_j)) - 2y(R) - 2y(L) \geq 1 - t \label{valid:comb}
      \end{equation}

      \noindent is facet-defining for $P_{CP}$, as was shown in~\cite{bauer1997}, and therefore, a valid inequality for $OP$.
      When all the teeth consist of exactly two vertices, the comb inequalities are known as blossom inequalities.

    \end{subsection}

    \begin{subsection}{Edge Cover Inequalities}
      The maximum length constraint \eqref{op:d0}, which is a capacity constraint for the edge variables, defines a $KP$ polytope, as explained in Section~\ref{sec:modelling}. For every feasible $(y, x)$, the edge variable, $x$, belongs to $P_{KP}$. For the OP, the Edge Cover inequalities are the cover inequalities of the associated $P_{KP}$ (\cite{balas1975}). These inequalities were first introduced for the OP in~\cite{leifer1994} and also used in~\cite{fischetti5} and~\cite{Gendreau1998ABA}. Let $F \subset E$ be a subset with $\sum_{e \in F} d_e > d_0$, then:
      \begin{equation}
        x(F) \leq |F|-1 \label{valid:ecover}
      \end{equation}
      defines an Edge Cover inequality for the OP\@. We assume that $F$ is a minimal cover, i.e.~for every $F_0 \subsetneq F$, we have $\sum_{e \in F_0} d_e \leq d_0$.

    \end{subsection}

    \begin{subsection}{Cycle Cover Inequalities}
      Every feasible cycle $F \subset E$ satisfies the equation $x(F) = y(V(F))$. Let $F \subset E$ be a subset that defines a cycle with $\sum_{e \in F} d_e > d_0$, then the inequality
      \begin{equation}
        x(F) \leq y(V(F))-1 \label{valid:ccover}
      \end{equation}
      is valid for the OP\@. These cuts were used in~\cite{fischetti5} and~\cite{Gendreau1998ABA}.

    \end{subsection}

    \begin{subsection}{Vertex Cover Inequalities}
      Let UB be an upper bound of the OP and $Q \subset V$ be a subset with $\sum_{v \in Q} s_v > UB$, then:
      \begin{equation}
        y(Q) \leq |Q|-1 \label{valid:vcover}
      \end{equation}
      defines a Vertex Cover inequality for the OP\@. We assume that $S$ is a minimal cover. These inequalities were first used for the OP in~\cite{Gendreau1998ABA}.

    \end{subsection}

    \begin{subsection}{Path Inequalities}
      The goal of these cuts is to exclude the paths that due to the length constraint~\eqref{op:d0} cannot be part of a feasible solution.
      Let $P=\{[i_1, i_2], [i_2, i_3], \ldots, [i_{k-1}, i_k]\}$ be any simple path through $V(P)= \{i_1, \ldots, i_k\} \subset V - \{1\}$, and define the vertex set:

      \begin{equation}
        W(P) := \{v \in V  - V(P) : d_{1, i_1} +\sum_{e \in P} d_e + d_{i_k, v} + d_{v,1} \leq d_0\}
      \end{equation}
      Then the following Path inequality
      \begin{equation}
        x(P) - y(V(P)) + y_1 + y_k  - \sum_{v \in W(P)} x_{i_k, v} \leq 0 \label{valid:path}
      \end{equation}

      \noindent is valid for the OP, see~\cite{fischetti5}.
    \end{subsection}

  \end{subsection}

\end{section}

\section{Branch-and-Cut Algorithm}\label{sec:bac}
  In this section, we present the principal contributions of this paper.  These contributions deal with the separation algorithms of inequalities stemming from the cycle problem (SECs and comb inequalities), the design of the separation loop, the pricing of variables for the column generation and the calculation of the lower and upper bounds of the problem.
  In Figure~\ref{fig:scheme} a flowchart representing a simplified B\&C algorithm can be consulted.
\begin{figure}[htb!]
  \centering
  \includegraphics[width=\columnwidth]{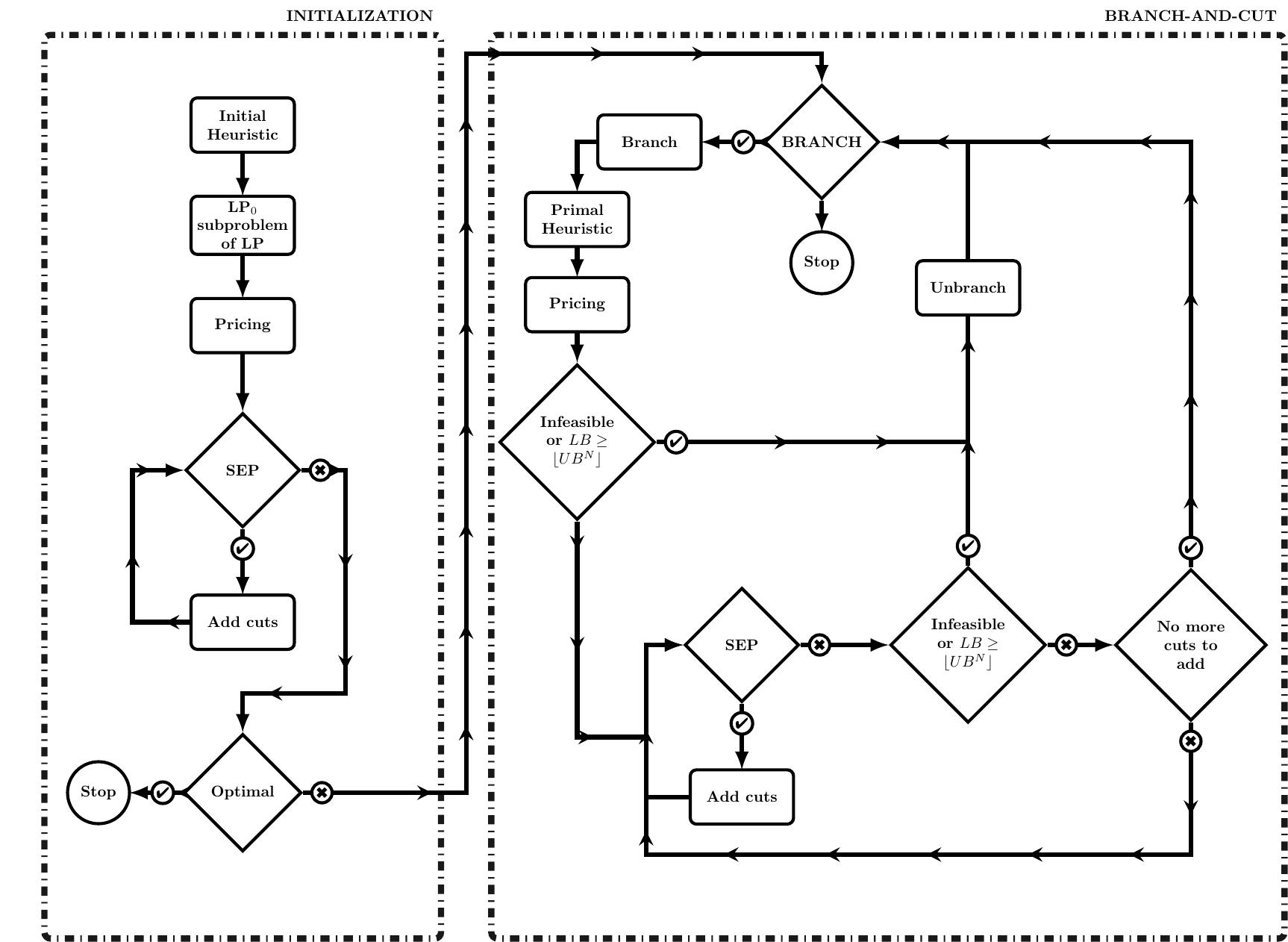}
  \caption{Flowchart of the Branch-and-Cut algorithm considered in this work. BRANCH is an oracle which returns an unevaluated node in the branching tree. SEP refers to the separation algorithms. At each action box of the flowchart the subproblem $LP_0$ is updated and solved.}\label{fig:scheme}
\end{figure}

  \begin{subsection}{Initialization}\label{bac:init}

    First of all, we obtain an initial heuristic solution. To that aim, we make use of the EA4OP metaheuristic in \cite{kobeaga2018} considering a small size population.

    Next, we build the initial subproblem, LP$_0$. Given the computational requirements of considering all the variables and constraints that define the OP, an initial subproblem LP$_0$ is built. The LP$_0$
    is initialized considering the following subset of constraints and variables:
    \begin{enumerate}[i)]
      \item All the vertex variables.
      \item Edges in the 10 nearest neighborhood graph.
      \item Maximum length constraint~\eqref{op:d0}, degree constraints~\eqref{op:deg}, and depot constraint~\eqref{op:depot}.
      \item Variable bounds,~\eqref{op:vbnd} and~\eqref{op:ebnd}.
    \end{enumerate}

    Immediately after the initialization, the edge variables are priced, see Section~\ref{sec:pricing}.
    In the rest of the paper, we use the LP$_0$ symbol to refer to any subproblem of the OP, regardless of whether it is the initial one or not.

  \end{subsection}

  \begin{subsection}{Separation Algorithms}\label{separation}

    In this section, we present the heuristic and exact separation algorithms used to find the violated inequalities.
    Our contributions are concentrated in the separation algorithms for SECs, CCs and blossom inequalities. Hence, we only give details of these separation algorithms in the section. The details of separation algorithms for the rest of the inequalities (Logical Constraints, Edge Cover, Vertex Cover, Cycle Cover, and Path inequalities) can be found in~\cite{fischetti5}.

    Let  $(y^{*}, x^{*})$ be a solution of a particular LP$_0$ problem and define $V^{*}=\{v \in V : y^{*}_v >0\}$ and  $E^{*}=\{e \in E : x^{*}_e >0\}$. Then, $G^{*}=(V^{*}, E^{*})$ is called the support graph associated with the solution $(y^{*}, x^{*})$.

    \begin{subsubsection}{SECs and CCs}\label{sec:seccc}
      Violated SECs \eqref{op:sec} and CCs \eqref{valid:cc} are found using a common separation algorithm. This is natural since, in both constraint families, the incidence vector of the arcs, $x$ in the inequality can be written as the star-set value, $x(\delta(Q))$ of a subset $Q$ of vertices. Since $\delta(Q)$ is the cut associated with $Q$, the separations of both inequalities are closely related to the minimum cut problem.
      In \cite{kobeaga2020} it was shown that the shrinking techniques substantially speed up the SEC separation algorithms. However, as explained below, the shrinking might also have a negative impact on the finding of violated CCs. In this section, we study how to efficiently use the shrinking to speed up the joint separation algorithm by reducing the adverse effects for CCs.

      Given a solution $(y^{*},x^{*})$ and a subset $Q$, the subset $Q$ could generate at the same time a violated SEC and a violated CC for $(y^{*}, x^{*})$. Since the CCs do not depend on the value of the vertices, while the SECs do, the CCs tend to be more violated and more stable, i.e., remain active in subsequent updates of the LP$_0$, than the SECs. Therefore, we treat the CCs with a higher priority.

      Although SECs are part of the OP model, in order to control the size of the working LP$_0$, they are included only when required. This strategy is reasonable since there exist polynomial exact separation algorithms for SECs.
      In contrast, the separation problem for CCs is not known to be polynomial, and it can be modeled as follows:

      \begin{subequations}
        \begin{align}
          \min & \quad 2\sum_{v\in V^{*}} y^{*}_v z_v - 2\sum_{v\in V^{*}} x^{*}_{(v,u)} z_v z_u \\
          s.t: & \quad \sum_{v \in S} s_v z_v\leq LB \\
               & \quad z_1=1 \\
               & \quad z_v \in \{0,1\} \qquad  \forall v\in V
        \end{align}\label{qkp}
      \end{subequations}

      \noindent where $z=(z_v)$ are binary variables whose values are $z_v=1$ if the node $v$ is selected and 0 otherwise. The problem~\eqref{qkp} is a Quadratic Knapsack Problem (QKP) with a fixed variable. Consequently, there exists a violated CC for $(y^{*},x^{*})$ if and only if the optimal solution of Problem~\eqref{qkp} has a value lower than 2.
      Taking into consideration that repeatedly solving QKPs during the B\&C is not viable, the CCs are not separated exactly, but in a heuristic manner take advantage of the SEC separation algorithm. The well-known approaches for the separation of SECs in the TSP, the connected component heuristic and Hong's approach can be extended to jointly separate the SECs and CCs:

      \textit {Connected components heuristic}.
      The straightforward heuristic to find violated SECs and CCs is to search for the connected components of $G^{*}$ using the depth-first-search algorithm. When a connected component contains the depot vertex $1$ and the sum of the vertices scores in the component is lower than $LB$, we record the associated CC of the component, otherwise, we record the associated SECs. 

      \textit {Extended Hong's approach}.
      There are two main strategies to exactly separate SEC inequalities in cycle problems, which are extensions of Hong's approach and the Padberg-Gr{\"o}tschel approach (also known as the Gomory-Hu tree-based approach) for the TSP, see \cite{kobeaga2020}.
      In both approaches, the separation is carried out by solving a sequence of $|V^{*}|-1$ $(s,t)$-minimum cut problems. On the one hand, in the extended Hong's approach, the vertex with a higher $y^{*}$ value (the depot vertex $1$) is fixed to be the source, $s$, and the sink vertices, $t$, are chosen from the set $V^{*}-\{1\}$. On the other hand, the extended Padberg-Gr{\"o}tschel approach is based on the so-called Gomory-Hu tree (directed and rooted in $1$), which is constructed by solving $|V^{*}|-1$ $(s,t)$-minimum cut problems.

      As mentioned above, and as already proposed in the literature, the SEC separation strategies are leveraged to find violated CCs as well.
      Although the extended Padberg-Gr{\"o}tschel approach obtains a larger number of violated SECs, it is not appropriate to find violated CCs, since the obtained sets do not contain the depot vertex $1$.
      Contrarily, the extended Hong's approach for SECs can be easily adapted to additionally find violated CCs.
      It can be achieved, by solving at each step of the separation algorithm the $(1, v)$-minimum cut (useful to find violated SECs) and $(v, 1)$-minimum cut (useful to find violated CCs) problems.
      For these reasons, we use the extended Hong's approach as the base strategy for the joint separation algorithm.

      The running time of these SEC separation algorithms can be improved using the shrinking techniques for cycle problems, as was seen in~\cite{kobeaga2020}. In this publication, three general shrinking rules (C1, C2, and C3) and three SEC specific shrinking rules (S1, S2, and S3) for cycle problems were presented. However, although the shrinking is a key strategy for efficiently separating the SECs, it might be unfavorable for the separation of CCs. The point is that when the vertices are contracted and grouped, the chance to obtain the subset of vertices with a score sum lower than $LB$ decreases, consequently, some violated CCs might vanish. Note that, the mentioned shrinking techniques are safe for valid inequalities of the cycle polytope and CCs are not. Therefore, since CCs are important cuts for OP, shrinking might have a negative impact on the performance of the overall B\&C algorithm for the OP. One contribution in this paper is to propose strategies to minimize the possible disadvantages of the shrinking (which is important to speed up the separation) in the joint separation algorithm for SECs and CCs.

        Following this, not all the shrinking strategies for cycle problems described in~\cite{kobeaga2020} are adequate for the OP problem. Particularly, we exclude the S2 shrinking rule (which leads to excessively aggressive shrinking strategies and hence to vanish violated CCs in some cases) and only consider the shrinking strategies C1C2 and S1 in the preprocess of the joint separation algorithm. Once entered in the separation algorithm, the shrinking rule S3, which contracts the sink and target of the solved minimum cut,
        contributes positively to separating both families of constraints since it enables a wider family of subset candidates to be obtained. Hence, the S3 rule is used in combination with the C1C2 and S1 shrinking strategies in the separation algorithm. After the S3 rule is applied, we search for new shrinkable sets using the selected shrinking strategy.

      Classically, the candidate subsets for SECs and CCs are obtained by the minimum cut algorithm. However, considering the importance of CCs, we intensify the search for extra candidate subsets for CCs, which is made more efficient by taking advantage of the vertex clustering obtained by the shrinking. We propose new strategies based on the following lemma:

    \begin{lemma}\label{thm:setdiff}
      Let $(y,x)$ be a vector that satisfies the degree constraints. If $U$ and $W$ are subsets of $V$ such that $W \subset U$, the following inequality is satisfied:
      \begin{equation}
        x(\delta(U-W)) \leq x(\delta(U)) + x(\delta(W)) \label{ineq:cliqmincut}
      \end{equation}
    \end{lemma}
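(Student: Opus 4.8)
The plan is to prove~\eqref{ineq:cliqmincut} by a direct edge-counting argument based on partitioning the vertex set, observing that the only property of $(y,x)$ actually used is the nonnegativity of $x$ (which holds for every LP solution arising in the B\&C, and in particular whenever the degree equations~\eqref{op:deg} hold together with the bounds~\eqref{op:ebnd}).

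First I would partition $V$ into the three pairwise disjoint blocks $A := S-Q$, $Q$, and $R := V-S$. Every edge of $G$ then lies in exactly one of the six classes $E(A), E(Q), E(R), (A:Q), (A:R), (Q:R)$, and only the last three can contribute to the coboundaries involved. Concretely, $\delta(S-Q)=\delta(A)=(A:Q)\cup(A:R)$, $\delta(Q)=(Q:A)\cup(Q:R)$, and $\delta(S)=\delta(A\cup Q)=(A:R)\cup(Q:R)$, where in each case the displayed union is disjoint because $A$, $Q$, $R$ are disjoint.

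Next I would substitute these descriptions into $x(\delta(S)) + x(\delta(Q)) - x(\delta(S-Q))$ and simplify: the contributions $x((A:R))$ cancel, the contributions $x((A:Q))=x((Q:A))$ cancel, and $x((Q:R))$ survives twice with a positive sign, giving the clean identity
\[
x(\delta(S)) + x(\delta(Q)) - x(\delta(S-Q)) = 2\,x\big((Q : V-S)\big).
\]
Since every $x_e \ge 0$, the right-hand side is nonnegative, which is exactly~\eqref{ineq:cliqmincut}.

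The argument has no genuine obstacle; the only care needed is the bookkeeping of the edge classes, so that nothing is double-counted and the cancellations are exact. The degenerate case $Q=S$ (where $S-Q=\emptyset$, $\delta(\emptyset)=\emptyset$, and the inequality reads $0 \le 2x(\delta(S))$) is covered automatically. If one prefers to invoke the degree equations explicitly, as the hypothesis suggests, the same identity can instead be obtained by writing $x(\delta(U)) = 2y(U) - 2x(E(U))$ for $U \in \{S,\,Q,\,S-Q\}$ and combining $y(S) = y(S-Q) + y(Q)$ with the disjoint decomposition $E(S) = E(S-Q)\,\sqcup\,E(Q)\,\sqcup\,(Q:S-Q)$; both routes collapse to the nonnegative quantity $2x((Q:V-S))$.
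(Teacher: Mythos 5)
Your argument is correct, and it takes a genuinely more elementary route than the paper. The paper proves the lemma by invoking the degree-constraint identity $x(\delta(T)) = 2y(T) - 2x(E(T))$ for $T \in \{S, Q, S-Q\}$, cancelling the $y$ terms via $y(S-Q) = y(S) - y(Q)$ and the edge-set decomposition $E(S-Q) = E(S) - E(Q) - \delta(Q)\cap E(S)$, and finally reducing the claim to $x(\delta(Q)\cap E(S)) \leq x(\delta(Q))$, which holds because $\delta(Q)\cap E(S) \subset \delta(Q)$ (this last step implicitly uses $x \geq 0$, just as your proof does). Your direct partition of $V$ into $S-Q$, $Q$, $V-S$ bypasses the degree constraints entirely and yields the exact identity $x(\delta(S)) + x(\delta(Q)) - x(\delta(S-Q)) = 2\,x((Q:V-S))$, which is the same slack the paper arrives at (up to the factor of $2$ it divides out mid-proof), so the two arguments are reconciled; your closing remark about rerunning the computation through the degree equations is essentially the paper's proof verbatim. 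What your route buys is generality and transparency: the inequality is seen to hold for any nonnegative $x$, with the degree-constraint hypothesis in the statement being superfluous, whereas the paper's route stays within the algebraic identities it reuses elsewhere (e.g., in Lemma~\ref{lema:cover}). The only point worth making explicit in your write-up is that nonnegativity of $x$ is an assumption beyond the literal hypothesis of the lemma, though it is automatic for the LP solutions to which the lemma is applied.
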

    \begin{proof}
      When $(y, x)$ satisfies the degree constraints, the identity $x(\delta(T)) = 2y(T) - 2x(E(T))$ is valid for every $T \subset V$. Replacing the respective expressions in the inequality~\eqref{ineq:cliqmincut} we obtain:
      \begin{subequations}
        \begin{align}
          2y(U-W) - 2x(E(U-W)) & \leq  2y(U) - 2x(E(U)) + 2y(W) - 2x(E(W)) \notag \\
          \intertext{Considering the hypothesis $W \subset U$, we have $y(U-W)=y(U)-y(W)$.}
          x(E(U)) - x(E(U-W)) & \leq 2y(W) - x(E(W)) \notag \\
          \intertext{Also, if $Q \subset S$, the equality $E(S-Q)=E(S)-E(Q) - \delta(Q)\cap E(S)$ holds.}
          x(E(U)) - x(E(U)) + x(E(W)) + x(\delta(W) \cap E(U)) & \leq 2y(W) - x(E(W)) \notag \\
          x(\delta(W) \cap E(U)) & \leq 2y(W) - 2x(E(W)) \notag \\
          x(\delta(W) \cap E(U)) & \leq x(\delta(W)) \notag
        \end{align}
      \end{subequations}
        This last inequality is satisfied due to $\delta(Q) \cap E(S) \subset \delta(Q)$, which proves the lemma.
    \end{proof}

      We use the following notation for shrinking. Let $\bar{G}=(\bar{V}, \bar{E})$ be the graph and $(\bar{x}, \bar{y})$ the vector obtained by applying a shrinking strategy to $G^{*}$ and $(y^{*}, x^{*})$, respectively, and $\pi: \mathcal{P}(\bar{V}) \rightarrow \mathcal{P}(V)$ the unshrinking function. Let $\bar{S}$ be the subset obtained by the $(\bar{v},\bar{1})$-minimum cut (where $\bar{1}$ is the contracted vertex that contains the depot vertex $1$), so $1 \in \pi(\bar{S})$, and suppose that $x(\delta(\bar{S})) < 2$. Note that, $x(\delta(S)) = \bar{x}(\delta(\bar{S}))$, where $S=\pi(\bar{S})$. If $\sum_{v \in S} us_v \leq LB$, the subset $S$ defines a violated CC. Otherwise, after each $(\bar{v},\bar{1})$-minimum cut problem is solved, and in the case that $x(\delta(\bar{S})) < 2$, we test the following strategies to find candidate subsets for CCs:
      \begin{enumerate}[i)]
        \item First, when $|\pi(\bar{1})| > 2$, we check if $y_{\bar{1}} <1$ and $\sum_{v \in \pi(\bar{1})} s_v \leq LB$. If this is the case, the subset $Q=\pi(\bar{1})$ defines a violated CC.
        \item Then, we check if there exists $\bar{v} \in \bar{V} - \bar{1}$, such that $ x(\delta(\bar{S})) + 2 y_{\bar{v}} <2$ and $\sum_{v \in \pi(\bar{S} - \bar{v})} s_v \leq LB$. If both inequalities are satisfied for $\bar{v}$, the subset $ \pi(\bar{S}-\bar{v})$ defines a violated CC.
        \item Finally, we sort the vertices in $\bar{S}-\bar{1}$ in non-decreasing order of $\bar{y}$, and check greedily for the greatest subset $S^{'}=\{\bar{v_1}, \ldots, \bar{v_k}\}$ of $\bar{S}$ such that $\bar{x}(\delta(\bar{S})) + 2\sum_{v \in S^{'}} y_{\bar{v}} < 2$. If $\sum_{\bar{v} \in \pi(\bar{S}-S^{'})} s_v \leq LB$, the subset $\pi(\bar{S}-S^{'})$ defines a violated CC.
      \end{enumerate}

  \end{subsubsection}

  \begin{subsubsection}{Comb Inequalities (blossoms)}\label{sec:blossom}

    For the B\&C presented in this work, we only use the blossom subfamily of comb inequalities.
    In this section, we present two heuristics to search for violated blossom inequalities in cycle problems, and in particular, for the OP. The heuristics are extensions of the~\cite{Padberg1980} and~\cite{Grotschel1991} separation algorithms, developed in the context of the TSP.

    The key point of the heuristics for blossom inequalities is to identify a subset of candidate handles to restrict the search of violated blossoms.
    In the literature of OP, a heuristic to find handle candidates is detailed in~\cite{fischetti5}. In this heuristic, the search is guided by the greedy algorithm of Kruskal for the Minimum Spanning Tree. At each iteration of the Kruskal algorithm, a new edge is inserted into the tree, and the connected component containing the edge is chosen as a candidate handle.
    In this work, we consider two alternative approaches to finding candidate handles: the Extended Padberg-Hong heuristic and the Extended Gr{\"o}tschel-Holland heuristic.

    \textit {Extended Padberg-Hong heuristic (EPH)}.
    \cite{Padberg1980} proposed a blossom separation heuristic for the TSP, which is known as the odd-component heuristic. In this heuristic for the TSP, the violated blossoms are found by restricting the set of candidate handles to the connected components of the fractional graph $G^{*}_{1} = (V^{*}_{1}, E^{*}_{1})$, where $E^{*}_{1} = \{ e \in E^{*}: 0 < x^{*}_e < 1\}$ and $V^{*}_{1}=V(E^{*}_{1})$. 

    We generalize this heuristic for the general cycle problems by applying the Padberg-Hong algorithm by levels. A level, $\lambda$, is defined by each different value of the set ${\{y^*_v\}}_v$. We call $L$ the set of distinct levels. Note that, the number of levels, $|L|$, is bounded by $|V|$. Associated with a level we have the level graph $G^{*}_{\lambda} = (V^{*}_{\lambda}, E^{*}_{\lambda})$, where $E^{*}_{\lambda} = \{ e \in E^{*}: 0 < x^{*}_e < \lambda\}$ and $V^{*}_{\lambda}=V(E^{*}_{\lambda})$.

    A faster heuristic to find the handle candidates can be designed by omitting some connected components of $G^{*}_{\lambda}$.  At every level, $\lambda$, we discard the connected components, $C_i^{\lambda}$, such that $y_v \neq \lambda$ for all $v \in C_i^{\lambda}$. Now, we identify the connected component of vertices with $y_v=\lambda$. So, in total, we search for $|V^{*}|$ different connected components of, in the worst case, $G_1^{*}$.

    Once we have identified an initial list of candidate handles, the next step is to find the associated teeth for these handles. Let $H$ be a candidate handle, and define the set of teeth as $\mathcal{T}_H= \{ e \in \delta(H): x^{*}_e \geq \lambda\}$. Recall that the teeth of blossoms are edges.
    Not all the teeth families obtained using this strategy satisfy the comb (blossom) definition. If two teeth overlap in $v \notin H$, then these two teeth are removed from the family of teeth $\mathcal{T}_H$ and the handle is updated as $H=H \cup \{v\}$. If, eventually, the list of teeth $\mathcal{T}_H$ consists of an odd number of at least three disjoint teeth, $\langle H, \mathcal{T}_H, L, R \rangle$ forms a blossom inequality where $L_i=T^j_i \cap H$ and  $R_i=T^j_i - H$. If there is just one tooth i.e., $\mathcal{T}_H=\{T\}$, we test if $H$ defines a violated CC. In the case that it does not, then $H$ alone defines a violated SEC.

    \textit {Extended Gr\"{o}tschel-Holland heuristic (EGH)}.
    Another fast heuristic for the TSP was proposed in~\cite{Grotschel1991} whose aim was to minimize the influence of small perturbations of $x^{*}$ in the separation algorithm. We have adapted this heuristic for the OP using the strategy of levels mentioned above. In this approach, the handles are considered as the vertex sets of the connected components of the graph $G^{*}_{\lambda, \epsilon}= (V^{*}, E^{*}_{\lambda, \epsilon})$ where
    \begin{equation}
      E^{*}_{\lambda, \epsilon} = \{ e \in E^{*}_{\lambda}: \epsilon \leq x^{*}_e \leq (1- \epsilon) \lambda\} \notag
    \end{equation}
    for a small $\epsilon$, $0<\epsilon<1$. Let $H$ denote the vertex set of such a component, a candidate handle, and let $e_1, \ldots, e_t$ be the edges in the set
    \begin{equation}
      \mathcal{T}_H=\{e \in \delta(H) \subset E^{*}: x^{*}_e >  (1 - \epsilon) \lambda \} \notag
    \end{equation}
    in the non-increasing order of $x^{*}_e$. If $t$ is even, then append to $\mathcal{T}_H$ the edge with the highest $x^{*}_e$ in
    \begin{equation}
      \{e \in \delta(H) \subset E^{*}: x^{*}_e < \epsilon \} \notag
    \end{equation}
    If the edges intersect, the strategy outlined above is followed to obtain a handle $H$ and a teeth family $\mathcal{T}_H$ that satisfies the blossom definition.

\begin{figure}
  \begin{center}
    \includegraphics[width=\columnwidth]{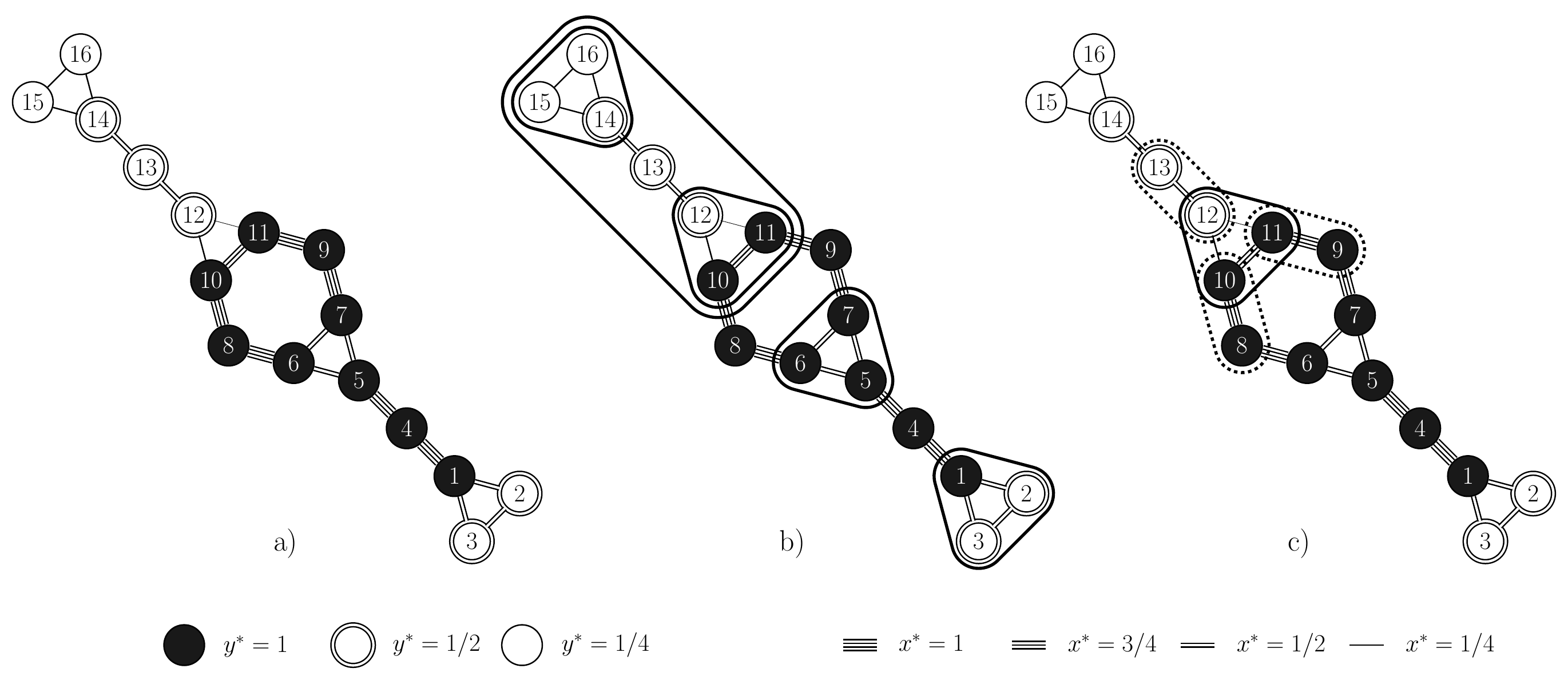}
    \caption{ Illustration for the Extended Padberg-Hong blossom heuristic. Figure a) represents the support graph, with the vertex and edge values detailed in the bottom legend. Figure b) shows all the handle candidates obtained by the heuristic. Figure c) a violated blossom found by the heuristic involving vertices with different $y$ values.}\label{fig:comb}
  \end{center}
\end{figure}

    In Figure~\ref{fig:comb} we illustrate the EPH blossom heuristic for cycle problems. In Figure \ref{fig:comb}.a) the given support graph is presented, where there are three distinct levels, $L=\{1, 1/2, 1/4\}$. In Figure \ref{fig:comb}.b) the candidate handles are presented. Three candidate handles are obtained in level 1: $\{1,2,3\}$, $\{5,6,7\}$ and $\{10, 11, 12, 13, 14, 15, 16 \}$. Two candidate handles are obtained in level $1/2$: $\{10, 11, 12\}$ and $\{14, 15, 16\}$. There are no candidate handles obtained in level $1/4$. Next, we check for violated cuts. The star-set of $\{10, 11, 12, 13, 14, 15, 16 \}$ is formed by two non-overlapping edges, so it is excluded.
    The candidates $\{5, 6, 7\}$ and $\{10, 11, 12\}$ define violated blossoms, e.g., $\langle \{10, 11, 12\}, \{\{8, 10\},\{9,11\},\{12,13\}\}, L, R \rangle$ where $L=\{10, 11, 12\}$ and $R=\{8, 9, 13\}$ shown in Figure \ref{fig:comb}.c).
    The candidates $\{1, 2, 3\}$ and $\{14, 15, 16\}$ define violated SECs, e.g. $\langle \{1, 2, 3\}, 1, 4 \rangle$ and  $\langle \{14, 15, 16\}, 14, 1 \rangle$, but first for $\{1, 2, 3\}$ it should be checked whether it defines a violated CC.
  \end{subsubsection}

\end{subsection}

\begin{subsection}{Column Generation}\label{sec:pricing}

  During the B\&C algorithm, only a subset of edges is included in the working LP$_0$\@. At certain points of the algorithm, we need to price the excluded edge variables, and add to the LP$_0$: 1) to guarantee that the working relaxation is an upper bound of the problem or branched subproblem and 2) to recover, whenever it is possible, a feasible LP$_0$ after feasibility breaking cuts have been added to the LP$_0$. Taking into account that usually only a small subset of variables is included in the LP$_0$, and that the excluded variables could participate in multiple cuts of the LP$_0$, the pricing phase could constitute a bottleneck of the B\&C algorithm. In this section, we develop a technique, inspired by that used in~\cite{concorde}, which enables us to avoid repetitive calculations and to skip the exact calculation of the reduced cost of some variables.

  Let us call $\mathcal{L}^V$ the family of SECs \eqref{op:sec}, CC \eqref{valid:cc}, and comb \eqref{valid:comb} cuts. In these cuts, the edge variables with non-negative coefficients can be represented as the sum star-set of subsets of vertices. Complementarily, let us call $\mathcal{L}^E$ the family of Logical \eqref{op:log}, Edge Cover \eqref{valid:ecover}, Cycle Cover \eqref{valid:ccover} and Path \eqref{valid:path} cuts. Note that the Vertex Cover \eqref{valid:vcover} inequalities do not contribute to the reduced cost of the edge variables.
  So, in the OP, the reduced cost of an edge variable, $e=[v,w]$, can be calculated by:
  \begin{equation}
    rc_e =  - d_e \pi_{d_0} - \pi_v -\pi_w + rc^V_e + rc^E_e
  \end{equation}
  \noindent where $\pi_{d_0}$ is the dual variable of the maximum length constraint \eqref{op:d0}, $\pi_v$ and $\pi_w$ are the dual variables of the degree constraints \eqref{op:deg} of $v$ and $w$ respectively, and $rc^V_e$ and $rc^E_e$ are the contributions made by the cuts in $\mathcal{L}^V$ and $\mathcal{L}^E$, respectively. We will see that the $rc^E_e$ values can be obtained in linear time in terms of $|V|$ and $|\mathcal{L}^E|$, and we will reproduce the pricing strategy used in~\cite{concorde} to calculate the $rc^V_e$ values.

  It can be seen that the cost of the calculation of all the $rc^E_e$ is $O(|\mathcal{L}^E| |V|)$. To that aim, it is sufficient to check that the number of edges with a non-negative coefficient in each cut of $\mathcal{L}^E$ is bounded by $|V|$. In the case of Logical, Cycle Cover, and Path inequalities, it is derived from the definition of the valid inequality. For Edge Cover inequalities, this bound is obtained in Lemma~\ref{lema:cover}.

  \begin{lemma}\label{lema:cover}
    Let $T\subset E$ denote a subset defining a violated cover inequality. If the degree equations~\eqref{op:deg} are satisfied by $(y, x) \in \mathbb{R}^{V \times E}$ then $|T| \leq |V|$.
  \end{lemma}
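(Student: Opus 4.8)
The idea is to play the violation of the cover inequality against the simple identity, forced by the degree equations, that the total $x$-weight of all edges equals the total $y$-weight of all vertices. Throughout I read the hypothesis in context: $(y,x)$ is the point of a working LP$_0$ at which the Edge Cover inequality is violated, so besides the degree equations~\eqref{op:deg} the variable bounds~\eqref{op:vbnd} and~\eqref{op:ebnd} also hold; in particular $x_e\ge 0$ for every $e\in E$ and $y_v\le 1$ for every $v\in V$.

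First I would record the counting identity. Summing~\eqref{op:deg} over all $v\in V$, and using that each edge of $E$ belongs to exactly two of the star-sets $\delta(v)$, gives $2\,x(E)=\sum_{v\in V}x(\delta(v))=2\sum_{v\in V}y_v=2\,y(V)$, hence $x(E)=y(V)$. Combining this with $y_v\le 1$ for all $v$ yields $x(E)=y(V)\le|V|$.

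Next I would invoke the violation. By definition of~\eqref{valid:ecover}, saying that $T$ defines a \emph{violated} cover inequality means $x(T)>|T|-1$. Since $T\subseteq E$ and $x_e\ge 0$ on $E$, we have $x(T)\le x(E)$, so $|T|-1<x(T)\le x(E)\le|V|$. As $|T|$ and $|V|$ are integers, $|T|-1<|V|$ forces $|T|\le|V|$, which is the claim.

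I do not expect any real obstacle here; the only point requiring care is being explicit about which LP$_0$ constraints are used, since the bare statement ``$(y,x)$ satisfies the degree equations'' must be understood together with the standing presence of the variable bounds in every LP$_0$ — otherwise $y(V)$, and hence $x(E)$, could not be bounded by $|V|$. It is also worth remarking that minimality of the cover $T$ is irrelevant to the argument: only its violation is used, and a non-violated minimal cover may well have many more than $|V|$ edges (for instance with unit distances and $d_0\gg|V|$).
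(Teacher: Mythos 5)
Your proof is correct and rests on the same core idea as the paper's: use the degree equations to convert the $x$-mass into $y$-mass, bound that by $|V|$ via $y_v\le 1$, and combine with the violation $|T|-1<x(T)$ and integrality. The only difference is cosmetic: you sum the degree equations over all of $V$ to get the global identity $x(E)=y(V)$ and then use $x(T)\le x(E)$, whereas the paper localizes to $S=V(T)$ via the identity $x(\delta(S))=2y(S)-2x(E(S))$, obtaining the chain $|T|-1<x(T)\le x(E(V(T)))\le y(V(T))\le |V|$; both chains need the standing LP bounds $x\ge 0$ and $y\le 1$, which you rightly make explicit while the paper leaves them implicit. Your remark that minimality of the cover is irrelevant is also accurate.
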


  \begin{proof}
    When the degree constraints are satisfied by $(y, x)$, as a consequence of the well-known equality $x(\delta(S)) = 2 y(S) - 2 x(E(S))$, the inequality $x(E(V(T))) \leq y(V(T))$ is always satisfied.
    Suppose that $T$ violates the cover inequality~\eqref{valid:ecover} then
    \begin{equation}
      |T|-1 < x(T) \leq x(E(V(T))) \leq y(V(T)) \leq |V|
    \end{equation}
  \end{proof}

  Calculating all the $rc^V_e$ values has a $O(|\mathcal{L}^V||V|^2)$ complexity when the cuts are stored externally as edge variable coefficient arrays.
  The strategy used in~\cite{concorde} speeds up the pricing by obtaining a fast lower bound of the reduced cost $rc^V_e$ (TSP is a minimization problem) and excluding for exact pricing the edges that have a negative lower bound.
  In order to use this strategy for the OP, first, the edge variables of the cuts in $\mathcal{L}^V$ must be represented and stored as a family of subsets of vertices, as we have done in Section~\ref{sec:valid}. Let $ \mathcal{S} = \mathcal{F}_1  \cup \ldots \cup \mathcal{F}_r$ be the family of all the subsets involved in the cuts of $\mathcal{L}^V$ where $\mathcal{F}_i=\{H_i\} \cup \mathcal{T}_i$.
  For combs, $H_i$ and $\mathcal{T}_i$ represent the handle and teeth set, respectively. For SECs and CCs we can assume that $\mathcal{T}_i=\emptyset$ and $H_i=\emptyset$, respectively.

  Based on the representation of the cuts in $\mathcal{L}^V$ by means of subsets of vertices, the cuts are stored in an efficient data structure by pointing to the subsets involved in the cut. This way each subset is saved once at most for all the cuts. Moreover, it allows us to speed up the evaluation of $rc^V_e$ values as explained below.

  Since the OP is a maximization problem, during the pricing, we need to identify the edge with positive reduced cost. We aim to define upper bounds, $\hat{rc}_e$, of the reduced costs $rc_e$, to exclude for exactly pricing the edges that have a non-positive upper bound $\hat{rc}^V_e$.

  For each subset, $S \in \mathcal{S}$, let us call $\pi_S$ the dual of the subset $S$  defined as:

  \begin{equation}
    \pi_S = \sum_{j=1}^r\chi_j(S) \pi_{j}
  \end{equation}

  \noindent where $\chi_j(S)=1$ if $S \in \mathcal{F}_j$ and 0 otherwise,  and $\pi_j$ is the dual variable associated with the cut $j$. Then, the contribution of the cuts in $\mathcal{L}^V$ in the reduced cost of an edge $e$ can be written as:
  \begin{equation}
    rc^V_e =  \sum_{\substack{S \in \mathcal{F}\\ V(e)\cap S \neq \emptyset \\ V(e) - S \neq \emptyset}}{\pi_S }
  \end{equation}
  where $\pi_S$ is the dual of a subset $S$. Since, for the edge $e=[v, w]$, each $S$ must contain either $v$ or $w$, an upper bound, $\hat{rc}^V_e$, of $rc^V_e$ can be obtained by:
  \begin{equation}
    \hat{rc}^V_e = \sum_{\substack{S \in \mathcal{F} \\ v \in S }}{\pi_S} + \sum_{\substack{S \in \mathcal{F} \\ w \in S}}{\pi_S } \notag
  \end{equation}

  \noindent which satisfies $rc^V_e \leq \hat{rc}^V_e$. Therefore, we have the desired upper bound:
  \begin{equation}
    \hat{rc}_e = - d_e \pi_{n+1} - \pi_v -\pi_w + rc^E_e + \hat{rc}^V_e
  \end{equation}

  Note that, each edge appears at most twice in a comb inequality, so the calculation of all the $\hat{rc}^V_e$ has a $O(M |\mathcal{L}^V| |V|)$ time complexity where $M$ is the maximum number of subsets involved in a cut. Therefore, the calculation of all the $\hat{rc}_e$ has a $O(M |\mathcal{L}^V| |V|)$ time complexity. In our B\&C, the value of $M$ is related to the number of teeth in the combs. To ensure a true linear time complexity procedure, one could limit the number of teeth in the combs. However, in practice, the number of teeth tends to be small and it can be assumed that $M << |V|$.

  We can exclude for exactly pricing the edges that $\hat{rc_e} \leq 0$. For those edges that $\hat{rc_e} > 0$, the exact reduced cost, $rc_e$, can be calculated by using the upper bound value:
  \begin{equation}
    rc_e = \hat{rc}_e - 2 \sum_{\substack{S \in \mathcal{F} \\ V(e) \subset S}}{\pi_S }
  \end{equation}

  The pricing loop is done in batches. In the first step, a fixed number of $\hat{rc}_e$ are calculated, the first batch of variables and those with positive values are preselected. In the next step, for those preselected variables, we calculate the exact reduced cost, $rc_e$, and add to the LP$_0$ the edges whose value is positive. Then, the LP$_0$ is updated. Next, we select the second batch of variables and we repeat the procedure. When the pricing aims to obtain the upper bound of the branched subproblem, we exit the pricing loop when a whole round of evaluation is performed without introducing a variable to the LP$_0$. When the pricing aims to recover a feasible LP$_0$, we exit the pricing loop once a feasible LP$_0$ is obtained without the need to price all the excluded variables.
\end{subsection}

\begin{subsection}{Separation Loop}\label{sec:sep}

  The separation loop to find the violated cuts is accomplished in three subloops. In the inner loop, we consider the separation of logical constraints~\eqref{op:log} and the connected components heuristic for SECs and CCs. In the middle loop, we consider the separations of cuts which are related to the cycle essence of the OP, i.e., SECs, CCs, blossoms, and Cycle Cover cuts. In the outer loop, we consider the rest of the cuts, i.e., the Edge Cover, Vertex Cover, and the Path inequalities. The separation loop is illustrated in Figure~\ref{fig:loop}.

\begin{figure}[htb!]
  \centering
  \includegraphics[width=0.9\columnwidth]{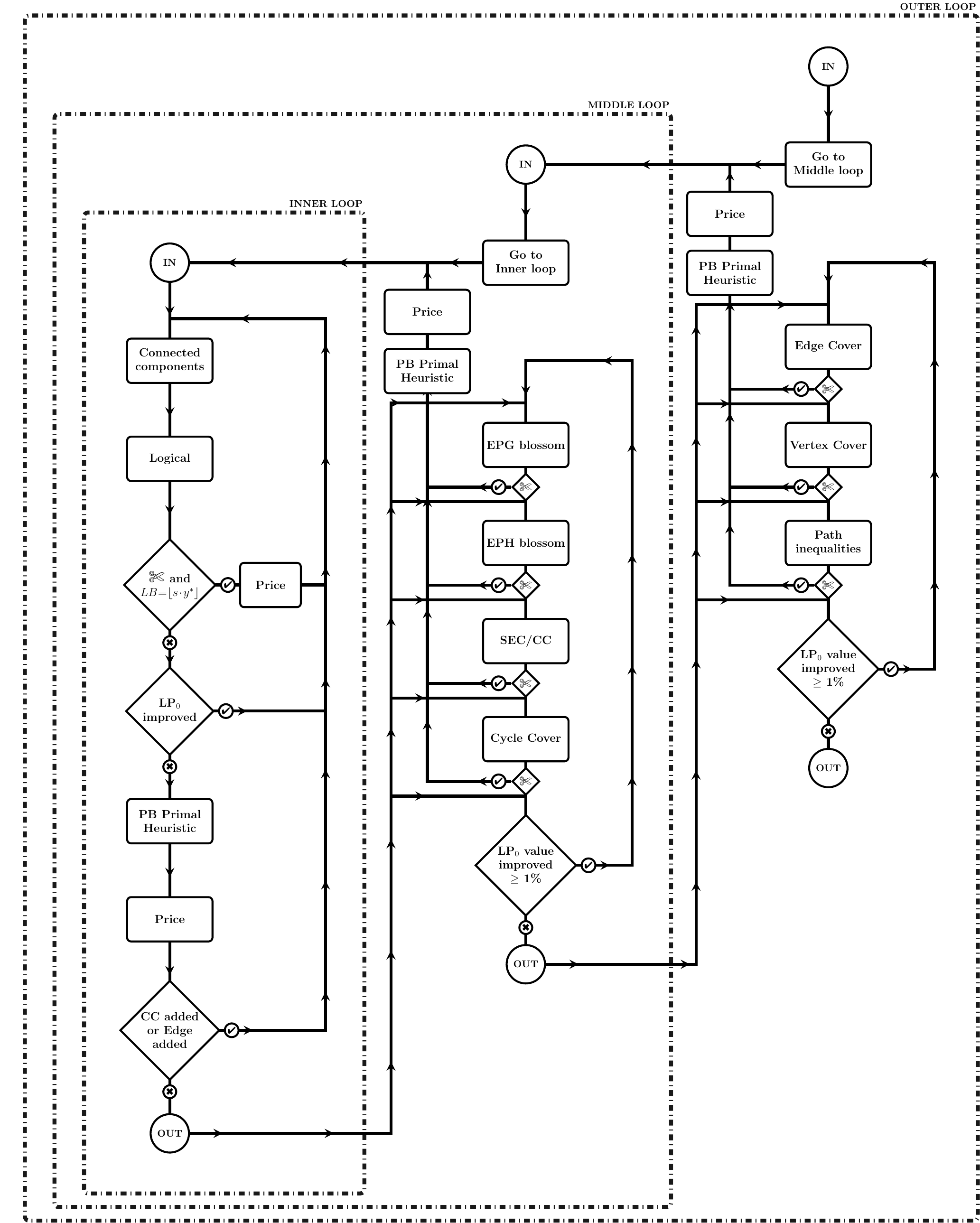}
  \caption{Illustration of the separation loop. The symbol \ding{36} represents that some cuts have been added to the LP$_0$.}\label{fig:loop}
\end{figure}

  At each subloop, the separation of the considered cuts is performed sequentially, instead of restarting from the beginning of the list.
  This is, we always carry out the next separation in the subloop list, regardless of whether or not we are coming from an interior subloop.
  This way, we give the same chance to all separations in a subloop and decrease the probability of bounding in the same separation algorithm in consecutive iterations of the subloop.

  The separation algorithms of the inner loop are fast since both have a $O(|E^{*}|)$ time complexity. First, we carry out the connected components heuristic and then the separation of logical constraints. In the inner loop, intending to keep it as a fast loop, we price the edge variables only when the floor part of the objective value is equal to the lower bound of the OP, i.e., if $\lfloor s \cdot y^{*} \rfloor = LB$.
  When both separations fail and no new edges have been added, we find a feasible solution using the PB primal heuristic (see Section~\ref{sec:heur}) and update the LB if needed. We add the associated CC of the heuristic solution if it is violated and then we price the variables. When a new CC cut or a priced edge has been added to the LP$_0$, the inner loop is repeated. Otherwise, we return to the middle loop.

  The middle and outer loops only differ in the considered constraint families. In the middle loop, we consider the separation algorithms in the following order: the extended Padberg-Hong algorithm for blossom, the extended Gr{\"o}tschel-Holland algorithm for blossom, the joint SEC/CC separation algorithm, and Cycle Cover separation algorithm. In the outer loop, we consider the Edge Cover algorithm, the Vertex Cover algorithm, the Path algorithm. When we enter in any of the loops, the first step is to execute the lower level subloops. Then, we start with the first algorithm on the list. If no violated cuts are found we move on to the next algorithm. If violated cuts are found, we first add the cuts and optimize the LP$_0$. Then, we search for a feasible solution using a primal heuristic and update the LB if needed. We add the associated CC of the heuristic solution in case it is violated and then we price the variables. At this point, we move to the lower level loop and continue with the next separation in the list.

  In the separation loop, after adding the violated cuts found in a separation algorithm, we check if any edge variable or constraint can be removed from the LP$_0$\@. We remove an edge variable from the LP$_0$ if, during a number of consecutive evaluations, its associated value, $x^{*}_e$, has been zero. We remove a constraint from the LP$_0$ if during a number of consecutive evaluations its slack has been higher than zero.

\end{subsection}

\begin{subsection}{Primal Heuristics and Lower Bounds}\label{sec:heur}

  We use two primal heuristics to obtain feasible solutions from a fractional solution $(y^{*}, x^{*})$. In the first heuristic, we obtain a single solution, by using the $x^{*}$ values related to edges, inspired by the heuristic proposed in~\cite{fischetti5}. In the second heuristic, first, we build a population of cycles and then evolve it using the EA4OP metaheuristic, see \cite{kobeaga2018}. The cycles in the population are constructed by selecting first the subset of vertices in each cycle using the $y^{*}$ values.

  \textit{Path Building primal heuristic (PB)}. The PB heuristic was presented in~\cite{fischetti5}. First, the edges $e \in E^{*}$ are sorted in decreasing order of $x^{*}_e$, and the ties are randomly broken. The procedure starts with an empty path $T=\emptyset$. At each step we select an edge $e \in E^{*}$ whose $x^{*}_e$ has the largest value from the set of edges which have not been considered yet. If the inclusion of $e$ in $T$ does not lead to a vertex with a degree larger than $2$, then $T=T\cup \{e\}$ otherwise we exclude $e$ and repeat the process. The path building heuristic finishes when the inclusion of $e$ leads to $T$ being a cycle or when there are no edges left to check. If the depot vertex is not in one of the paths in $T$, it is included as a single point path. If $T$ consists of multiple paths, we extend it to a cycle by randomly connecting the extreme vertices (in the original paper the paths were joined using the nearest neighbor heuristic). Since this primal heuristic is fast, it is used in the separation loop.

  \textit{Vertex Picking primal heuristic (VP) with the EA4OP metaheuristic}. In the VP heuristic, we first select a collection of vertices in $V^{*}$ and then build a random cycle through the selected vertices. Each vertex $v$ is selected according to a Bernouilli distribution with parameter $y^{*}_v$. By applying multiple times the VP strategy to obtain feasible solutions from $(y^{*}, x^{*})$, we build a small population. Then, as explained below, we ensure that the solutions in the population are feasible and improve when it is possible. Once we have a population with feasible solutions, it is evolved using the EA4OP metaheuristic proposed in~\cite{kobeaga2018}. The EA4OP with VP heuristic is used to find feasible solutions after an edge is branched, as shown in Figure~\ref{fig:scheme}.

  For solutions obtained by PB and VP heuristics, we improve the route lengths using the Lin-Kernighan heuristic for the TSP, and then first check if it satisfies the constraint~\eqref{op:d0}.  If it does not, we apply the drop operator which consists in deleting vertices from the solution until the cycle satisfies the length constraint. Then we try to improve the solution by the k-d tree based vertex inclusion procedure as explained in~\cite{kobeaga2018}.

\end{subsection}

\begin{subsection}{Branching and Upper Bounds}\label{sec:branching}

  The branching is carried out in a classical way following a depth-first-search, where the edges are branched first to 1 and then to 0. In order to select the edge variable to branch, we use the classical branching strategy: the edge $e$, with the fractional value closest to $0.5$ is selected, i.e., the edge that minimizes $|x^{*}_e - 0.5|$.

  The global upper bound and branch node upper bound are calculated just before pruning a branch. The branch node upper bound, $UB^N$, is used to verify the pruning, i.e, that $LB \geq \lfloor UB^N \rfloor$. The global upper bound is calculated with two aims: firstly, to use it in Vertex Cover separation, and secondly, to compute the optimality gap when the algorithm finishes due to time limitations.

  The global upper bound, $UB^G$ of OP, is obtained using the dual solution $\pi^{*}$ of the solution $(y^{*}, x^{*})$ of the LP$_0$:

  \begin{equation}
    UB^G = \sum_{i=1}^c \pi^{*}_i b_i+ rc^{*}_1 + \sum_{\substack{v \in V - \{1\}\\ rc^{*}_v > 0}} rc^{*}_v + \sum_{\substack{e \in E\\ rc^{*}_e > 0}} rc^{*}_e
  \end{equation}
  \noindent where the reduced costs $rc_v^{*}$ and $rc_e^{*}$ are calculated using the dual variables $\pi^{*}$ and $c$ is the number of constraints.

  The upper bound of a branch node, $UB^N$, can be calculated by subtracting the contributions of the branched edges to $UB^G$. Let $B_0, B_1 \subset E$ be the subset of edges branched to 0 and 1, respectively. Then, we obtain $UB^N$ by:
  \begin{equation}
    UB^N = \sum_{i=1}^c \pi^{*}_i b_i+ rc^{*}_1 + \sum_{\substack{v \in V - \{1\}\\ rc^{*}_v > 0}} rc^{*}_v + \sum_{\substack{e \in E\\ rc^{*}_e > 0}} rc^{*}_e - \sum_{\substack{e \in B_0\\ rc^{*}_e > 0}} rc^{*}_e + \sum_{\substack{e \in B_1\\ rc^{*}_e < 0}} rc^{*}_e
  \end{equation}

\end{subsection}

\section{Computational Experiments}\label{sec:exp}

In this section, we present the results of the computational experiments. Firstly, we evaluate the new designed components for the revisited B\&C algorithm (RB\&C); and secondly, we compare the performance of RB\&C with state-of-the-art B\&C and heuristic algorithms. The software used for the experiments is publicly available on \url{github.com/gkobeaga/op-solver}.

The experiments are carried out using well-known instances in the literature. These instances, which are based on the TSPLIB library, were first proposed in~\cite{fischetti5} and then extended to larger problems in~\cite{kobeaga2018}. The instances are split into two groups: medium-sized instances (up to 400 nodes) and large-sized instances (up to 7397 nodes). In total, we consider 258 benchmark instances. They are also classified into three generations (Gen1, Gen2 and Gen3) according to the definition of scores, see~\cite{fischetti5}. For all of these three generations, the distance limitation is set as half of the TSP solution value. These instances are publicly available at \url{github.com/bcamath-ds/OPLib}.

In order to measure the performance of the algorithms, we compare the quality of the returned best solutions (LB) and the mean running time (in seconds) of the algorithms. In addition, in the case of the B\&C algorithms, we also compare the obtained upper bounds (UB). All the experiments for the compared algorithms have been carried out using a 5-hour time limit.

In Table~\ref{tab:common-param}, we detail the values of the common parameters for all the simulations of the RB\&C algorithm. They were chosen inspired by the parameters used in~\cite{concorde} and our preliminary experiments for the OP\@.

\begin{table}[htb!]
  \begin{center}
    \caption{Common parameters.}\label{tab:common-param}
    \centering
    \begin{tabular}{lrl}
      \toprule
      Parameter & Value & Description \\
      \midrule
      ZERO & $10^{-7}$ & Sensibility of fractional numbers\\
      \cmidrule(lr){1-1} \cmidrule(lr){2-2} \cmidrule(lr){3-3}
      ADD\_CUT\_BATCH & 250 & Maximum number of cuts added to the LP$_0$ at once\\
      ADD\_MIN\_VIOL & $10^{-6}$ & Minimum violation of a cut to include it in the LP$_0$\\
      SUBLOOP\_IMPR & $1\%$ & Minimum improvement to repeat the subloops \\
      ADD\_SEC\_PER\_SET & 50 & Amount of SECs considered for each subset\\
      ADD\_PATH\_MAX & 500 & Maximum cuts for Path inequalities separation\\
      ADD\_EGH\_EPSILON & 0.3 & Epsilon value for the EGH blossom heuristic\\
      \cmidrule(lr){1-1} \cmidrule(lr){2-2} \cmidrule(lr){3-3}
      PRICE\_MAX\_ADD & 200 & Maximum number of variables added to the LP$_0$\\
      PRICE\_RC\_THRESH & $10^{-5}$ & Minimum penalty of a variable to add to the LP$_0$\\
      DEL\_DUST\_VAR & $10^{-3}$ & Minimum $y$ value to consider an edge as active\\
      DEL\_DUST\_CUT & $10^{-3}$ &  Maximum slack value to consider a cut as active\\
      DEL\_MAX\_AGE\_CUT & 5 & Consecutive inactivity to delete a cut from the LP$_0$\\
      DEL\_MAX\_AGE\_VAR & 100 &  Consecutive inactivity to delete an edge from the LP$_0$\\
      \cmidrule(lr){1-1} \cmidrule(lr){2-2} \cmidrule(lr){3-3}
      XHEUR\_GREEDY\_XMIN & 0.3 & Use arcs larger than this value in PB primal heuristic\\
      XHEUR\_EA4OP\_POP\_SIZE & 10 & Population size for EA4OP\\
      XHEUR\_EA4OP\_D2D & 5 & Iterations before checking feasibility in EA4OP\\
      XHEUR\_EA4OP\_NPAR & 3 & Number of parents preselected in EA4OP\\
      \bottomrule
    \end{tabular}
  \end{center}
\end{table}

\subsection{Evaluation of Components}\label{sec:eval}
In this section, we evaluate the designed components for the RB\&C algorithm in Section~\ref{sec:bac}.
We have carried out experiments with several alternative configurations of the components.
To that aim, a subset of 15 OP instances were selected: 5 TSP instances (pr76, att532, vm1084, rl1323 and vm1748, inspired by the subset selected in~\cite{goldberg2001}) with their respective score generations proposed in~\cite{fischetti5}. Then, for each instance and generation, we have executed the different B\&C configurations 5 times.

In order to evaluate our contributions, we have chosen a reference configuration, REFERENCE, that  incorporates the components proposed in Section~\ref{sec:bac} and compared it with its alternative configurations.
The reference RB\&C algorithm considers the following components:

\begin{itemize}
  \item SEC/CC separation algorithm (Section \ref{sec:seccc}):
    \begin{enumerate}[i)]
      \item SRK=S1S3: Uses shrinking rules S1 and S3.
      \item CC STRATS: Uses strategies to find extra violated CCs.
    \end{enumerate}
  \item Blossom separation algorithms (Section \ref{sec:blossom}):
    \begin{enumerate}[i)]
      \item EPH BLOSSOM: Uses Extended Padberg-Hong blossom heuristic.
      \item EGH BLOSSOM: Uses Extended Gr{\"o}tschel-Holland blossom heuristic.
    \end{enumerate}
  \item Separation algorithms from the literature:
    \begin{enumerate}[i)]
      \item CYCLE: Uses Cycle Cover inequalities.
      \item EDGE: Uses Edge Cover inequalities.
      \item PATH: Uses Path inequalities.
    \end{enumerate}
  \item Separation Loop strategy:
    \begin{enumerate}[i)]
      \item SEP=THREE SUBLOOPS: Uses the separation loop strategy presented in Section~\ref{sec:sep}.
    \end{enumerate}
  \item Primal heuristics (Section \ref{sec:heur}):
    \begin{enumerate}[i)]
      \item BRANCH XHEUR=VP + EA4OP: Constructs a small population using VP heuristic and evolves it with EA4OP.
      \item SEP XHEUR=PB: Constructs a single solution using PB in the separation loop.
    \end{enumerate}
\end{itemize}

The alternative configurations are obtained by modifying a single component in REFERENCE, while the rest of the components remain untouched. These changes to REFERENCE are made by deleting a component(-), adding a new component(+) or replacing a component (COMP=).
The tested alternative strategies are the following:
\begin{itemize}
  \item SEC/CC separation algorithm:
    \begin{enumerate}[i)]
      \item -SRK: Does not use any shrinking technique. As a consequence, CC STRATS are not used either.
      \item SRK=C1C2S3: The shrinking rule S1 is replaced with the rules C1C2.
      \item -CC STRATS: Does not use strategies to find extra violated CCs.
    \end{enumerate}
  \item Blossom separation algorithms:
    \begin{enumerate}[i)]
      \item -EPH BLOSSOM: Does not use the Extended Padberg-Hong blossom heuristic
      \item -EGH BLOSSOM: Does not use the Extended Gr{\"o}tschel-Holland blossom heuristic
      \item +FST BLOSSOM: Uses the blossom separation heuristic in~\cite{fischetti5}
    \end{enumerate}
  \item Separation algorithms from the literature:
    \begin{enumerate}[i)]
      \item -CYCLE COVER: Does not use Cycle Cover inequalities
      \item -EDGE COVER: Does not use Edge Cover inequalities
      \item +VERTEX COVER: Uses Vertex Cover inequalities
      \item -PATH: Does not use Path inequalities
    \end{enumerate}
  \item Separation Loop strategy:
    \begin{enumerate}[i)]
      \item SEP=TWO SUBLOOPS: The separations algorithms in the outer subloop are appended to the middle subloop.
    \end{enumerate}
  \item Primal heuristic in the branch node:
    \begin{enumerate}[i)]
      \item BRANCH XHEUR=PB: Constructs a single solution using PB heuristic.
      \item BRANCH XHEUR=VP - EA4OP: Constructs a single solution using VP heuristic.
  \end{enumerate}
\end{itemize}

In Table~\ref{tab:valid} we summarize the mean relative difference to the best achieved LB and UB, as well as the mean relative difference to the best performing configuration in terms of running time. The results grouped by instances are presented in Appendix \ref{appendix:valid}.

\begin{table}[htb!]
  \centering
  \scriptsize
  \caption{Results of the alternative configurations for RB\&C.  In bold, the values of the alternatives that are worse than those obtained by the REFERENCE configuration.}\label{tab:valid}
  \begin{tabular}{lrrrrrrrrr}
    \toprule
 & \multicolumn{9}{c}{Gap} \\ \cmidrule(lr){2-10}
 & \multicolumn{3}{c}{Gen1} & \multicolumn{3}{c}{Gen2} & \multicolumn{3}{c}{Gen3} \\ \cmidrule(lr){2-4}\cmidrule(lr){5-7}\cmidrule(lr){8-10}
    Strategy  & LB & UB & Time & LB & UB & Time & LB & UB & \multicolumn{1}{c}{Time} \\
    \midrule
    REFERENCE  & $0.05$ & $0.00$ & $262.06$ & $0.05$ & $0.04$ & $\phantom{0}23.11$ & $0.02$ & $0.01$ & $\phantom{0}44.02$ \\
    \midrule
    - SRK  & $\mathbf{0.13}$ & $0.00$ & $\mathbf{532.37}$ & $\mathbf{0.10}$ & $0.04$ & $\phantom{0}\mathbf{25.86}$ & $0.02$ & $\mathbf{0.02}$ & $\mathbf{134.74}$ \\
    SRK=C1C2S3  & $0.02$ & $0.00$ & $\phantom{0}88.32$ & $\mathbf{0.09}$ & $0.04$ & $\phantom{0}\mathbf{31.72}$ & $0.01$ & $0.01$ & $\phantom{0}\mathbf{79.81}$ \\
    - CC STRATS  & $0.02$ & $0.00$ & $115.91$ & $0.04$ & $0.01$ & $\phantom{0}21.85$ & $0.01$ & $0.01$ & $\mathbf{449.90}$ \\
    \midrule
    - EPH BLOSSOM  & $\mathbf{0.09}$ & $\mathbf{0.15}$ & $208.65$ & $\mathbf{0.12}$ & $\mathbf{0.15}$ & $\phantom{0}\mathbf{33.64}$ & $\mathbf{0.10}$ & $\mathbf{0.22}$ & $\mathbf{199.79}$ \\
    - EGH BLOSSOM  & $0.02$ & $0.00$ & $\mathbf{296.71}$ & $0.04$ & $0.04$ & $\phantom{0}\mathbf{26.18}$ & $\textbf{0.03}$ & $0.01$ & $\phantom{0}\mathbf{91.83}$ \\
    + FST BLOSSOM  & $0.00$ & $0.00$ & $\mathbf{345.32}$ & $0.04$ & $0.00$ & $\phantom{0}\mathbf{26.43}$ & $\mathbf{0.04}$ & $0.00$ & $\phantom{0}\mathbf{66.54}$ \\
    \midrule
    - EDGE COVER  & $\mathbf{0.11}$ & $0.00$ & $137.73$ & $\mathbf{0.13}$ & $0.04$ & $\phantom{0}\mathbf{30.04}$ & $\mathbf{0.05}$ & $0.01$ & $\phantom{0}35.50$ \\
    - CYCLE COVER  & $\mathbf{0.06}$ & $0.00$ & $124.79$ & $0.02$ & $0.04$ & $\phantom{0}\mathbf{25.60}$ & $\mathbf{0.03}$ & $0.01$ & $\phantom{0}\mathbf{48.18}$ \\
    - PATH  & $\mathbf{0.08}$ & $0.00$ & $183.86$ & $\mathbf{0.10}$ & $0.04$ & $\phantom{0}\mathbf{32.00}$ & $\mathbf{0.03}$ & $0.01$ & $\phantom{0}\mathbf{69.01}$ \\
    + VERTEX COVER  & $0.05$ & $0.00$ & $\phantom{0}61.10$ & $0.03$ & $0.04$ & $\phantom{0}22.33$ & $\mathbf{0.03}$ & $0.01$ & $\mathbf{104.82}$ \\
    \midrule
    SEP: TWO SUBLOOPS  & $0.05$ & $0.00$ & $\mathbf{315.34}$ & $\mathbf{0.06}$ & $0.04$ & $\phantom{0}17.05$ & $\mathbf{0.03}$ & $0.01$ & $\mathbf{164.44}$ \\
    \midrule
    BRANCH XHEUR=PB  & $\mathbf{0.08}$ & $0.00$ & $179.14$ & $\mathbf{0.12}$ & $0.01$ & $\phantom{00}2.37$ & $\mathbf{0.04}$ & $0.01$ & $\phantom{0}62.74$ \\
    BRANCH XHEUR=VP - EA4OP  & $0.02$ & $0.00$ & $222.46$ & $\mathbf{0.07}$ & $0.04$ & $\phantom{00}7.17$ & $0.01$ & $0.01$ & $\mathbf{168.63}$ \\
    \bottomrule
  \end{tabular}
\end{table}

The results show that the alternatives decrease the performance of the REFERENCE configuration for the RB\&C algorithm either in terms of solution quality, upper bound value, or running time.
The experiments restate the importance of the shrinking techniques for the SEC/CC separation algorithm, as can be seen in the results for -SRK. It is not only worse not using the shrinking in terms of time, but indeed, the obtained LB values are also worse. In addition, the results suggest that the S1 shrinking technique, which is considered in REFERENCE, might be preferable to the C1C2 technique. Regarding the CC STRATS, the results for Gen3 suggest that not considering the strategies to find extra violated CCs might have a negative impact on the running time of the algorithm.

  Next, looking at the separation algorithms for blossoms, the results show that the EPH heuristic is crucial in the RB\&C, particularly, if we focus on the obtained LB and UB values. From the table, we can also extract that the EGH heuristic improves the running time of the B\&C algorithm. Alternatively, although the FST blossom heuristic might improve the quality of the solutions, it reports worse running times.

  With respect to the rest of the separation algorithms proposed in the literature for the OP, we include in REFERECE all but Vertex Cover inequalities. This way, the RB\&C uses the same families of cuts as in~\cite{fischetti5}, which enables us to evaluate the contributions in this paper in a better way.

  Finally, the experiments show that the VP primal heuristic plays an important role in obtaining better LB values, particularly for large problems, as can be seen in the detailed results in Appendix \ref{appendix:valid}.  However, solving the VP primal heuristic in the branch node is more costly than PB primal heuristic, hence the running time of the RB\&C is worsened in the smallest instances. Similarly, by using the EA4OP to improve the results by VP heuristic, the obtained LB values are improved in large problems at the expense of worsening the running time in the smallest instances.

\subsection{Comparison with state-of-the-art Algorithms}\label{sec:res}

The proposed reference RB\&C has been compared with the state-of-the-art B\&C algorithm in~\cite{fischetti5} (FST) and two state-of-the-art heuristics,~\cite{kobeaga2018} (EA4OP) and~\cite{santini2019} (ALNS). The detailed results can be found in Appendix \ref{appendix:comp}.

 Three notes before moving on to the discussion. First, the FST code reports the running times using one trailing digit while the rest of the algorithms report the times using two trailing digits. In order to make use of the reported times in the literature of the FST, we round the obtained times by the RB\&C to one trailing digit when we compare it with the FST algorithm. Secondly, the FST returns a false optimum for pa561 in Gen1. We assume that this is a consequence of the rounding sensibility and we accept as valid the rest of the reported optima by FST. Thirdly, eight instances (rat99, rat195, tsp225, pa561, rat575, rat783, nrw1379, and fnl4461) of Gen3 have been excluded for the comparison of the RB\&C with the EA4OP and the ALNS, due to an issue in the generation of scores of the instances used by those algorithms. Since the results of the current comparison are clear enough, we have discarded rerunning the experiments with the updated scores.

First, we compare the RB\&C algorithm with the B\&C by~\cite{fischetti5}. The results of the FST algorithm were updated using CPLEX12.5 in~\cite{kobeaga2018}, which is the same version of CPLEX used for the experiments of RB\&C. Moreover, the new experiments are run on the same machine with the same amount of reserved memory (4GB).
In Table~\ref{tab:new_comp} we summarize, by size and generation, the number of instances returning a feasible solution, \#, the obtained optimality certifications, OPT, the number of best-known solution (LB), and upper bound (UB) values.

\begin{table}[!ht]
  \centering
  \caption{Comparison of the  number of instances in which a feasible solution (\#), an optimal (OPT), a best-known solution (LB) or a best upper bound value (UB) were obtained.}\label{tab:new_comp}
  \begin{tabular}{llcccccccc}
    \toprule
& & \multicolumn{2}{c}{\#} & \multicolumn{2}{c}{OPT} & \multicolumn{2}{c}{LB} & \multicolumn{2}{c}{UB} \\ \cmidrule(lr){3-4}\cmidrule(lr){5-6}\cmidrule(lr){7-8}\cmidrule(lr){9-10}
    Size & Gen & FST & RB\&C & FST & RB\&C & FST & RB\&C & FST & \multicolumn{1}{c}{RB\&C} \\
    \midrule
    Medium & Gen1  & $\phantom{0}45$ & $\phantom{0}45$ & $\phantom{0}\mathbf{45}$ & $\phantom{0}44$ & $\phantom{0}45$ & $\phantom{0}45$ & $\phantom{0}\mathbf{45}$ & $\phantom{0}44$ \\
           & Gen2  & $\phantom{0}45$ & $\phantom{0}45$ & $\phantom{0}45$ & $\phantom{0}45$ & $\phantom{0}45$ & $\phantom{0}45$ & $\phantom{0}45$ & $\phantom{0}45$ \\
           & Gen3  & $\phantom{0}45$ & $\phantom{0}45$ & $\phantom{0}45$ & $\phantom{0}45$ & $\phantom{0}45$ & $\phantom{0}45$ & $\phantom{0}45$ & $\phantom{0}45$ \\
           \midrule
    Large & Gen1  & $\phantom{0}21$ & $\phantom{0}\mathbf{41}$ & $\phantom{0}12$ & $\phantom{0}\mathbf{24}$ & $\phantom{0}13$ & $\phantom{0}\mathbf{39}$ & $\phantom{0}13$ & $\phantom{0}\mathbf{40}$ \\
          & Gen2  & $\phantom{0}22$ & $\phantom{0}\mathbf{41}$ & $\phantom{00}9$ & $\phantom{0}\mathbf{10}$ & $\phantom{00}9$ & $\phantom{0}\mathbf{36}$ & $\phantom{0}13$ & $\phantom{0}\mathbf{38}$ \\
          & Gen3  & $\phantom{0}29$ & $\phantom{0}\mathbf{41}$ & $\phantom{00}9$ & $\phantom{0}\mathbf{12}$ & $\phantom{0}13$ & $\phantom{0}\mathbf{35}$ & $\phantom{0}12$ & $\phantom{0}\mathbf{37}$ \\
          \midrule
          & All  & $207$ & $\mathbf{258}$ & $165$ & $\mathbf{180}$ & $170$ & $\mathbf{245}$ & $173$ & $\mathbf{249}$ \\
          \bottomrule
  \end{tabular}
\end{table}

In Table~\ref{tab:new_comp} it can be seen that the RB\&C algorithm is able to obtain the best-known solutions value in all the medium-sized instances.
Moving on to large-sized instances, the superiority of the RB\&C algorithm compared to the FST approach becomes evident. While the FST algorithm fails to output a solution in almost half of the instances (mainly because of running out of memory), the  RB\&C algorithm returns a solution for every instance. Moreover, it obtains the best-known solution in significantly more instances than FST (245 against 170) and UB (249 against 173) values. Even more, it obtains more optimality certifications (180 against 165).

\begin{table}[!ht]
  \caption{ Comparison of the number of obtained optimal solutions (OPT), number of best-known solutions (LB) and number of best upper bounds (UB) in the instances that FST does return a solution.}\label{tab:new_finishes}
  \centering
  \begin{tabular}{lccccccccc}
    \toprule
 &  & \multicolumn{2}{c}{OPT} & \multicolumn{2}{c}{LB} & \multicolumn{2}{c}{UB} & \multicolumn{2}{c}{Time} \\ \cmidrule(lr){3-4}\cmidrule(lr){5-6}\cmidrule(lr){7-8}\cmidrule(lr){9-10}
 & \# & FST & RB\&C & FST & RB\&C & FST & RB\&C & FST & \multicolumn{1}{c}{RB\&C} \\
 \midrule
    Gen1  & $\phantom{0}66$ & $1$ & $\mathbf{4}$ & $0$ & $\phantom{0}\mathbf{6}$ & $2$ & $\phantom{0}\mathbf{8}$ & $15$ & $\phantom{0}\mathbf{40}$ \\
    Gen2  & $\phantom{0}67$ & $\mathbf{1}$ & $0$ & $0$ & $\mathbf{11}$ & $3$ & $\phantom{0}\mathbf{9}$ & $25$ & $\phantom{0}\mathbf{27}$ \\
    Gen3  & $\phantom{0}74$ & $1$ & $\mathbf{3}$ & $1$ & $\mathbf{14}$ & $4$ & $\mathbf{17}$ & $23$ & $\phantom{0}\mathbf{33}$ \\
    \midrule
    All  & $207$ & $3$ & $\mathbf{7}$ & $1$ & $\mathbf{31}$ & $9$ & $\mathbf{34}$ & $63$ & $\mathbf{100}$ \\
    \bottomrule
  \end{tabular}
\end{table}

  In Table~\ref{tab:new_finishes} we compare the quality of the solutions and running times, restricted to those instances in which FST actually returns a solution. We particularly focus on the number of solutions (optimality certifications, best-known solutions and upper bounds) that are new in the literature, i.e., values not obtained by the rest of the algorithms. Thus, for the lower-bound values, we also take into account the results obtained by the EA4OP and ALNS heuristics. Additionally, we show the number of instances in which the considered B\&C algorithms are faster than the competitor. When we restrict the considered instances to the instances where the FST obtains a feasible solution, the RB\&C outperforms the results of the FST. While the FST obtains 1 new best-known solution (not obtained by any other algorithm) and 9 new UB values, the RB\&C obtains 31 LB and 34 UB new values. In the same set of instances, the FST obtains 3 optimality certifications that the RB\&C is not able to obtain, while the RB\&C obtains 7 optimality certifications that the FST is unable to obtain. Moreover, it turns out that the RB\&C is faster than the FST in 100 instances while the FST is faster than the RB\&C in 63 instances.

Next, we compare the RB\&C algorithm against state-of-the-art algorithms in terms of solution quality, running time, and Pareto efficiency. In Table~\ref{tab:pareto_medium} and Table~\ref{tab:pareto_large} the algorithms are compared pairwise and instance-by-instance for medium-sized and large-sized instances respectively. The aim is to measure the number of instances where an algorithm is simultaneously as least as fast as the opponent and obtains a better quality solution.

\begin{table}[!ht]
  \centering
  \caption{Comparison in medium-sized instances against state-of-the-art algorithms in terms of quality, time and Pareto efficiency.}\label{tab:pareto_medium}
  \begin{tabular}{lccccccccc}
    \toprule
 & \multicolumn{3}{c}{Gen1} & \multicolumn{3}{c}{Gen2} & \multicolumn{3}{c}{Gen3} \\ \cmidrule(lr){2-4}\cmidrule(lr){5-7}\cmidrule(lr){8-10}
 & EA4OP & tie & RB\&C & EA4OP & tie & RB\&C & EA4OP & tie & \multicolumn{1}{c}{RB\&C} \\
 \midrule
    Quality  & $\phantom{0}0$ & $30$ & $\mathbf{15}$ & $\phantom{0}0$ & $14$ & $\mathbf{31}$ & $\phantom{0}0$ & $15$ & $\mathbf{27}$ \\
    Time  & $15$ & $\phantom{0}0$ & $\mathbf{30}$ & $\mathbf{37}$ & $\phantom{0}0$ & $\phantom{0}8$ & $\mathbf{39}$ & $\phantom{0}0$ & $\phantom{0}3$ \\
    Pareto  & $\phantom{0}7$ & $\phantom{0}0$ & $\mathbf{30}$ & $\mathbf{10}$ & $\phantom{0}0$ & $\phantom{0}8$ & $\mathbf{13}$ & $\phantom{0}0$ & $\phantom{0}3$ \\

            & \multicolumn{3}{c}{ } & \multicolumn{3}{c}{ } & \multicolumn{3}{c}{ } \\ \cmidrule(lr){2-4}\cmidrule(lr){5-7}\cmidrule(lr){8-10}
            & ALNS & tie & RB\&C & ALNS & tie & RB\&C & ALNS & tie & \multicolumn{1}{c}{RB\&C} \\
            \midrule
    Quality  & $0$ & $40$ & $\phantom{0}\mathbf{5}$ & $0$ & $29$ & $\mathbf{16}$ & $0$ & $29$ & $\mathbf{13}$ \\
    Time  & $1$ & $\phantom{0}0$ & $\mathbf{44}$ & $4$ & $\phantom{0}0$ & $\mathbf{41}$ & $8$ & $\phantom{0}0$ & $\mathbf{34}$ \\
    Pareto  & $1$ & $\phantom{0}0$ & $\mathbf{44}$ & $1$ & $\phantom{0}0$ & $\mathbf{41}$ & $5$ & $\phantom{0}0$ & $\mathbf{34}$ \\

            & \multicolumn{3}{c}{ } & \multicolumn{3}{c}{ } & \multicolumn{3}{c}{ } \\ \cmidrule(lr){2-4}\cmidrule(lr){5-7}\cmidrule(lr){8-10}
            & FST & tie & RB\&C & FST & tie & RB\&C & FST & tie & \multicolumn{1}{c}{RB\&C} \\
            \midrule
    Quality  & $\phantom{0}0$ & $45$ & $\phantom{0}0$ & $\phantom{0}0$ & $45$ & $\phantom{0}0$ & $\phantom{0}0$ & $45$ & $\phantom{0}0$ \\
    Time  & $14$ & $\phantom{0}6$ & $\mathbf{25}$ & $17$ & $\phantom{0}2$ & $\mathbf{26}$ & $18$ & $\phantom{0}1$ & $\mathbf{26}$ \\
    Pareto  & $14$ & $\phantom{0}6$ & $\mathbf{25}$ & $17$ & $\phantom{0}2$ & $\mathbf{26}$ & $18$ & $\phantom{0}1$ & $\mathbf{26}$ \\
    \bottomrule
  \end{tabular}
\end{table}

\begin{table}[!ht]
  \centering
  \caption{Comparison in large-sized instances against state-of-the-art algorithms in terms of quality, time and Pareto efficiency.}\label{tab:pareto_large}
  \begin{tabular}{lccccccccc}
    \toprule
 & \multicolumn{3}{c}{Gen1} & \multicolumn{3}{c}{Gen2} & \multicolumn{3}{c}{Gen3} \\ \cmidrule(lr){2-4}\cmidrule(lr){5-7}\cmidrule(lr){8-10}
 & EA4OP & tie & RB\&C & EA4OP & tie & RB\&C & EA4OP & tie & \multicolumn{1}{c}{RB\&C} \\
 \midrule
    Quality  & $\phantom{0}1$ & $0$ & $\mathbf{40}$ & $\phantom{0}5$ & $0$ & $\mathbf{36}$ & $\phantom{0}3$ & $0$ & $\mathbf{33}$ \\
    Time  & $\mathbf{39}$ & $0$ & $\phantom{0}2$ & $\mathbf{40}$ & $1$ & $\phantom{0}0$ & $\mathbf{35}$ & $1$ & $\phantom{0}0$ \\
    Pareto  & $\phantom{0}1$ & $0$ & $\phantom{0}\mathbf{2}$ & $\phantom{0}\mathbf{5}$ & $0$ & $\phantom{0}1$ & $\phantom{0}\mathbf{3}$ & $0$ & $\phantom{0}1$ \\

            & \multicolumn{3}{c}{ } & \multicolumn{3}{c}{ } & \multicolumn{3}{c}{ } \\ \cmidrule(lr){2-4}\cmidrule(lr){5-7}\cmidrule(lr){8-10}
            & ALNS & tie & RB\&C & ALNS & tie & RB\&C & ALNS & tie & \multicolumn{1}{c}{RB\&C} \\
            \midrule
    Quality  & $2$ & $\phantom{0}2$ & $\mathbf{37}$ & $\phantom{0}4$ & $\phantom{0}1$ & $\mathbf{36}$ & $\phantom{0}4$ & $\phantom{0}0$ & $\mathbf{32}$ \\
    Time  & $6$ & $11$ & $\mathbf{24}$ & $\mathbf{13}$ & $25$ & $\phantom{0}3$ & $\mathbf{13}$ & $19$ & $\phantom{0}4$ \\
    Pareto  & $4$ & $\phantom{0}0$ & $\mathbf{34}$ & $\phantom{0}5$ & $\phantom{0}0$ & $\mathbf{24}$ & $\phantom{0}4$ & $\phantom{0}0$ & $\mathbf{20}$ \\

            & \multicolumn{3}{c}{ } & \multicolumn{3}{c}{ } & \multicolumn{3}{c}{ } \\ \cmidrule(lr){2-4}\cmidrule(lr){5-7}\cmidrule(lr){8-10}
            & FST & tie & RB\&C & FST & tie & RB\&C & FST & tie & \multicolumn{1}{c}{RB\&C} \\
            \midrule
    Quality  & $0$ & $13$ & $\mathbf{28}$ & $0$ & $\phantom{0}9$ & $\mathbf{32}$ & $3$ & $11$ & $\mathbf{27}$ \\
    Time  & $1$ & $\phantom{0}5$ & $\mathbf{35}$ & $8$ & $13$ & $\mathbf{20}$ & $5$ & $17$ & $\mathbf{19}$ \\
    Pareto  & $1$ & $\phantom{0}1$ & $\mathbf{39}$ & $8$ & $\phantom{0}0$ & $\mathbf{33}$ & $7$ & $\phantom{0}2$ & $\mathbf{32}$ \\
    \bottomrule
  \end{tabular}
\end{table}

Table~\ref{tab:pareto_medium} shows that the RB\&C algorithm is competitive in medium-sized instances. Compared to the ALNS heuristic and FST algorithm, it obtains better Pareto efficiency results in the three generations.
Comparing it to EA4OP, the Pareto efficiency is lower because the heuristic is a faster algorithm. Nevertheless, the RB\&C obtains much better quality solutions.

Table~\ref{tab:pareto_large} shows that RB\&C is the best performing algorithm in large-sized instances. Particularly, it behaves better than the FST algorithm, obtaining the best quality and time solutions in most of the instances, hence obtaining better Pareto results. The ALNS algorithm is able to return some solutions with better quality or running time, however, overall, the RB\&C performs better in large-sized instances.
The EA4OP metaheuristic is faster than the B\&C but, in general, obtains worse quality solutions.

\begin{table}[!ht]
  \caption{New best-known optimum, lower bound and upper bound values.}\label{tab:new_all}
  \centering
  \begin{tabular}{lccc}
    \toprule
  & OPT & LB & \multicolumn{1}{c}{UB} \\
  \midrule
    Gen1  & $12$ & $25$ & $28$ \\
    Gen2  & $\phantom{0}2$ & $27$ & $28$ \\
    Gen3  & $\phantom{0}4$ & $24$ & $29$ \\
    \midrule
    All  & $18$ & $76$ & $85$ \\
    \bottomrule
  \end{tabular}
\end{table}

Finally, in Table~\ref{tab:new_all}, we summarize the new best-known results obtained in the experiments. The RB\&C algorithm obtains 18 new optimality certifications, 76 new best-known solution values and 85 new upper-bound values.

\section{Conclusions and Future Work}\label{sec:concl}

We have presented a revisited version of the B\&C algorithm for the OP that brings multiple contributions together. We have proposed a joint separation algorithm for SECs and CCs, which efficiently uses the shrinking technique for cycle problems by reducing the adverse effects of the shrinking for CCs.
We have developed two blossom heuristics for cycle problems which generalize the well-known approaches in the literature of the TSP. We have designed an efficient variable pricing procedure for the OP which enables us to avoid repetitive calculations and to skip the exact calculation of the reduced cost of some variables. We have proposed a separation loop for the OP that takes into consideration the different contributions and separation costs of the valid inequalities. We have used alternative primal heuristics, one of which is based on a metaheuristic, and a mechanism to update the global upper bound during the branching phase to tighten the lower and upper bounds for the cases when the algorithm finishes without an optimality certification.

The experiments have shown that the RB\&C algorithm for OP is a more efficient approach than the state-of-the-art B\&C algorithm.
The introduced algorithm has increased the number of solved problems, obtained better running times in more instances, succeeded in returning new optimality certifications, new best known solutions, and new upper-bound values for large problems. Additionally, it has been shown that the RB\&C algorithm obtains better quality solutions than the state-of-the-art heuristics for the OP within the 5-hour running time limit.

Nevertheless, there are many research lines that remain open after this work. One of the most demanding aspects to improve in the presented approach is the implementation of advanced branching techniques.  The use of more general cuts, such as combs and clique trees, and the development of their respective separation algorithms for cycle problems might help to improve the performance of the RB\&C algorithm.
All these future contributions might help to solve the remaining instances until optimality, but we can anticipate it will not be an easy challenge. Implementing the contributions in this paper to other cycle problems which are different from the OP will definitely help to comprehend their importance in the context of cycle problems with a more general view.

        {\textbf{Acknowledgements}}  \ \
{\small
  The first and second authors are partially supported by the project PID2019-104933GB-I00 (Spanish Ministry of Science and Innovation).
  The first and third authors are partially supported by the projects BERC 2018{-}2021 (Basque Government) and by SEV{-}2017{-}0718 (Spanish Ministry of Economy and Competitiveness).
  The first author is also supported by the grant BES{-}2015{-}072036 (Spanish Ministry of Economy and Competitiveness) and project ELKARTEK (Basque Government).
  The second author is supported by IT{-}1252{-}19 (Basque Government) and GIU17/011 (University of the Basque Country).
  The third author is also supported by  IT{-}1244{-}19 (Basque Government) and TIN2016{-}78365R (Spanish Ministry of Science and Innovation).
  We gratefully acknowledge the authors of the TSP solver Concorde for making their code available to the public, since it has been the working basis of our implementations.  We also thank Prof. J.J. Salazar-Gonzalez who provided us with the codes used in~\cite{fischetti5}.
}


\begin{thebibliography}{}

\bibitem[Angelelli et~al., 2017]{ANGELELLI2017269}
Angelelli, E., Archetti, C., Filippi, C., and Vindigni, M. (2017).
\newblock The probabilistic orienteering problem.
\newblock {\em Computers \& Operations Research}, 81:269--281.

\bibitem[Applegate et~al., 2007]{concorde}
Applegate, D.~L., Bixby, R.~E., Chvatal, V., and Cook, W.~J. (2007).
\newblock {\em The Traveling Salesman Problem: A Computational Study (Princeton
  Series in Applied Mathematics)}.
\newblock Princeton University Press, Princeton, NJ, USA.

\bibitem[Archetti et~al., 2016]{archetti2016}
Archetti, C., Corber\'{a}n, A., Plana, I., Sanchis, J.~M., and Speranza, M.~G.
  (2016).
\newblock A branch-and-cut algorithm for the orienteering arc routing problem.
\newblock {\em Computers \& Operations Research}, 66:95 -- 104.

\bibitem[Archetti et~al., 2014]{archetti2014}
Archetti, C., Speranza, M.~G., Corber\'an, A., Sanchis, J.~M., and Plana, I.
  (2014).
\newblock The team orienteering arc routing problem.
\newblock {\em Transportation Science}, 48(3):442--457.

\bibitem[Balas, 1975]{balas1975}
Balas, E. (1975).
\newblock Facets of the knapsack polytope.
\newblock {\em Mathematical Programming}, 8(1):146--164.

\bibitem[Bauer, 1997]{bauer1997}
Bauer, P. (1997).
\newblock The circuit polytope: Facets.
\newblock {\em Mathematics of Operations Research}, 22(1):110--145.

\bibitem[Bianchessi et~al., 2018]{bianchessi2018}
Bianchessi, N., Mansini, R., and Speranza, M.~G. (2018).
\newblock A branch-and-cut algorithm for the team orienteering problem.
\newblock {\em International Transactions in Operational Research},
  25(2):627--635.

\bibitem[Boussier et~al., 2007]{boussier2007}
Boussier, S., Feillet, D., and Gendreau, M. (2007).
\newblock An exact algorithm for team orienteering problems.
\newblock {\em 4OR quarterly journal of the Belgian, French and Italian
  Operations Research Societies}, 5:211--230.

\bibitem[Dang et~al., 2013]{dang2013}
Dang, D.-C., El-Hajj, R., and Moukrim, A. (2013).
\newblock A branch-and-cut algorithm for solving the team orienteering problem.
\newblock In Gomes, C. and Sellmann, M., editors, {\em Integration of AI and OR
  Techniques in Constraint Programming for Combinatorial Optimization
  Problems}, pages 332--339, Berlin, Heidelberg. Springer Berlin Heidelberg.

\bibitem[Fischetti et~al., 1998]{fischetti5}
Fischetti, M., Salazar-Gonz{\'a}lez, J.~J., and Toth, P. (1998).
\newblock Solving the orienteering problem through branch-and-cut.
\newblock {\em INFORMS Journal on Computing}, 10:133--148.

\bibitem[Gendreau et~al., 1998]{Gendreau1998ABA}
Gendreau, M., Laporte, G., and Semet, F. (1998).
\newblock A branch-and-cut algorithm for the undirected selective traveling
  salesman problem.
\newblock {\em Networks}, 32:263--273.

\bibitem[Goldberg and Tsioutsiouliklis, 2001]{goldberg2001}
Goldberg, A.~V. and Tsioutsiouliklis, K. (2001).
\newblock Cut tree algorithms: An experimental study.
\newblock {\em Journal of Algorithms}, 38(1):51 -- 83.

\bibitem[Golden et~al., 1987]{GoLeVo87}
Golden, B.~L., Levy, L., and Vohra, R. (1987).
\newblock The orienteering problem.
\newblock {\em Naval Research Logistics}, 34:307--318.

\bibitem[Gr{\"o}tschel and Holland, 1991]{Grotschel1991}
Gr{\"o}tschel, M. and Holland, O. (1991).
\newblock Solution of large-scale symmetric travelling salesman problems.
\newblock {\em Mathematical Programming}, 51(1):141--202.

\bibitem[Keshtkaran et~al., 2015]{keshtkaran2015}
Keshtkaran, M., Ziarati, K., Bettinelli, A., and Vigo, D. (2015).
\newblock Enhanced exact solution methods for the team orienteering problem.
\newblock {\em International Journal of Production Research},
  ahead-of-print:1--11.

\bibitem[Kobeaga et~al., 2018]{kobeaga2018}
Kobeaga, G., Merino, M., and Lozano, J.~A. (2018).
\newblock An efficient evolutionary algorithm for the orienteering problem.
\newblock {\em Computers \& Operations Research}, 90:42 -- 59.

\bibitem[Kobeaga et~al., 2020]{kobeaga2020}
Kobeaga, G., Merino, M., and Lozano, J.~A. (2020).
\newblock On solving cycle problems with branch-and-cut: Extending shrinking
  and exact subcycle elimination separation algorithms.
\newblock arXiv:2004.14574.

\bibitem[Laporte and Martello, 1990]{LAPORTE1990193}
Laporte, G. and Martello, S. (1990).
\newblock The selective travelling salesman problem.
\newblock {\em Discrete Applied Mathematics}, 26(2):193 -- 207.

\bibitem[Leifer and Rosenwein, 1994]{leifer1994}
Leifer, A.~C. and Rosenwein, M.~B. (1994).
\newblock Strong linear programming relaxations for the orienteering problem.
\newblock {\em European Journal of Operational Research}, 73(3):517--523.

\bibitem[Padberg and Hong, 1980]{Padberg1980}
Padberg, M. and Hong, S. (1980).
\newblock {\em On the symmetric travelling salesman problem: A computational
  study}, pages 78--107.
\newblock Springer Berlin Heidelberg, Berlin, Heidelberg.

\bibitem[Poggi et~al., 2010]{poggi2010}
Poggi, M., Viana, H., and Uchoa, E. (2010).
\newblock {The Team Orienteering Problem: Formulations and Branch-Cut and
  Price}.
\newblock In Erlebach, T. and L{\"u}bbecke, M., editors, {\em 10th Workshop on
  Algorithmic Approaches for Transportation Modelling, Optimization, and
  Systems (ATMOS'10)}, volume~14 of {\em OpenAccess Series in Informatics
  (OASIcs)}, pages 142--155, Dagstuhl, Germany. Schloss
  Dagstuhl--Leibniz-Zentrum fuer Informatik.

\bibitem[Ramesh et~al., 1992]{ramesh1992}
Ramesh, R., Yoon, Y.-S., and Karwan, M.~H. (1992).
\newblock An optimal algorithm for the orienteering tour problem.
\newblock {\em ORSA Journal on Computing}, 4(2):155--165.

\bibitem[Riera-Ledesma and Salazar-Gonz\'alez, 2017]{rieraledesma2017}
Riera-Ledesma, J. and Salazar-Gonz\'alez, J.~J. (2017).
\newblock Solving the team orienteering arc routing problem with a column
  generation approach.
\newblock {\em European Journal of Operational Research}, 262(1):14 -- 27.

\bibitem[Santini, 2019]{santini2019}
Santini, A. (2019).
\newblock An adaptive large neighbourhood search algorithm for the orienteering
  problem.
\newblock {\em Expert Systems with Applications}, 123:154 -- 167.

\bibitem[Tsiligirides, 1984]{tsiligirides1984}
Tsiligirides, T. (1984).
\newblock Heuristic methods applied to orienteering.
\newblock {\em Journal of the Operational Research Society}, 35:797--809.

\bibitem[Vansteenwegen and Gunawan, 2019]{Vansteenwegen2019}
Vansteenwegen, P. and Gunawan, A. (2019).
\newblock {\em State-of-the-Art Solution Techniques for OP and TOP}, pages
  41--66.
\newblock Springer International Publishing, Cham.

\end{thebibliography}

\clearpage

  \clearpage
  \appendix
  \renewcommand*{\thesection}{\Alph{section}}

  \section*{Appendices}\label{appendix}
  \section{Configuration of Components: Detailed Results}\label{appendix:valid}

  In this section, we show the detailed results of the alternative RB\&C configurations by instances and generations. Each configuration has been executed five times with a 5-hour execution time limit. We show the obtained results of the configuration in terms of lower-bound values, LB, upper-bound values, UB, and time (in seconds) performance, Time. For the LB and UB, the obtained best value for each configuration (the maximum for LB and the minimum for the UB) is presented in the Best column. Regarding the Time, the Mean column shows the meantime of the five executions. The Gap column represents the relative distance to best-known value (higher Best value in the case of LB, and lower Best in the case of UB and Mean in the case of Time, respectively).

\begin{landscape}
  \begin{table}[htb!]
    \centering
    \tiny
    \begin{tabular}{lcccccccccccccccccc}
      \toprule
 & \multicolumn{18}{c}{Gen} \\ \cmidrule(lr){2-19}
 & \multicolumn{6}{c}{Gen1} & \multicolumn{6}{c}{Gen2} & \multicolumn{6}{c}{Gen3} \\ \cmidrule(lr){2-7}\cmidrule(lr){8-13}\cmidrule(lr){14-19}
 & \multicolumn{2}{c}{LB} & \multicolumn{2}{c}{UB} & \multicolumn{2}{c}{Time} & \multicolumn{2}{c}{LB} & \multicolumn{2}{c}{UB} & \multicolumn{2}{c}{Time} & \multicolumn{2}{c}{LB} & \multicolumn{2}{c}{UB} & \multicolumn{2}{c}{Time} \\ \cmidrule(lr){2-3}\cmidrule(lr){4-5}\cmidrule(lr){6-7}\cmidrule(lr){8-9}\cmidrule(lr){10-11}\cmidrule(lr){12-13}\cmidrule(lr){14-15}\cmidrule(lr){16-17}\cmidrule(lr){18-19}
      Strategy  & Best & Gap & Best & Gap & Mean & Gap & Best & Gap & Best & Gap & Mean & Gap & Best & Gap & Best & Gap & Mean & \multicolumn{1}{c}{Gap} \\
      \midrule
      REFERENCE  & $\phantom{00}49$ & $0$ & $\phantom{00}49$ & $0$ & $0.04$ & $123.66$ & $2708$ & $0$ & $2708$ & $0$ & $1.13$ & $\phantom{0}90.94$ & $2430$ & $0$ & $2430$ & $0$ & $1.03$ & $\phantom{0}39.55$ \\
      \midrule
      - SRK  & $\phantom{00}49$ & $0$ & $\phantom{00}49$ & $0$ & $0.04$ & $119.35$ & $2708$ & $0$ & $2708$ & $0$ & $1.21$ & $104.70$ & $2430$ & $0$ & $2430$ & $0$ & $1.06$ & $\phantom{0}42.60$ \\
      SRK=C1C2S3  & $\phantom{00}49$ & $0$ & $\phantom{00}49$ & $0$ & $0.04$ & $111.83$ & $2708$ & $0$ & $2708$ & $0$ & $1.38$ & $133.98$ & $2430$ & $0$ & $2430$ & $0$ & $0.90$ & $\phantom{0}21.84$ \\
      - CC STRATS  & $\phantom{00}49$ & $0$ & $\phantom{00}49$ & $0$ & $0.04$ & $124.73$ & $2708$ & $0$ & $2708$ & $0$ & $1.13$ & $\phantom{0}90.57$ & $2430$ & $0$ & $2430$ & $0$ & $1.04$ & $\phantom{0}39.74$ \\
      \midrule
      - EPH BLOSSOM  & $\phantom{00}49$ & $0$ & $\phantom{00}49$ & $0$ & $0.03$ & $\phantom{0}64.52$ & $2708$ & $0$ & $2708$ & $0$ & $1.44$ & $143.58$ & $2430$ & $0$ & $2430$ & $0$ & $0.74$ & $\phantom{00}0.00$ \\
      - EGH BLOSSOM  & $\phantom{00}49$ & $0$ & $\phantom{00}49$ & $0$ & $0.02$ & $\phantom{00}0.00$ & $2708$ & $0$ & $2708$ & $0$ & $1.22$ & $106.29$ & $2430$ & $0$ & $2430$ & $0$ & $0.97$ & $\phantom{0}30.56$ \\
      + FST BLOSSOM  & $\phantom{00}49$ & $0$ & $\phantom{00}49$ & $0$ & $0.09$ & $398.92$ & $2708$ & $0$ & $2708$ & $0$ & $1.32$ & $123.29$ & $2430$ & $0$ & $2430$ & $0$ & $0.85$ & $\phantom{0}14.47$ \\
      \midrule
      - EDGE COVER  & $\phantom{00}49$ & $0$ & $\phantom{00}49$ & $0$ & $0.03$ & $\phantom{0}83.87$ & $2708$ & $0$ & $2708$ & $0$ & $1.33$ & $125.56$ & $2430$ & $0$ & $2430$ & $0$ & $1.76$ & $136.91$ \\
      - CYCLE COVER  & $\phantom{00}49$ & $0$ & $\phantom{00}49$ & $0$ & $0.05$ & $174.19$ & $2708$ & $0$ & $2708$ & $0$ & $1.20$ & $103.38$ & $2430$ & $0$ & $2430$ & $0$ & $0.97$ & $\phantom{0}30.70$ \\
      - PATH  & $\phantom{00}49$ & $0$ & $\phantom{00}49$ & $0$ & $0.04$ & $116.13$ & $2708$ & $0$ & $2708$ & $0$ & $1.39$ & $135.40$ & $2430$ & $0$ & $2430$ & $0$ & $0.79$ & $\phantom{00}7.02$ \\
      + VERTEX COVER  & $\phantom{00}49$ & $0$ & $\phantom{00}49$ & $0$ & $0.04$ & $104.30$ & $2708$ & $0$ & $2708$ & $0$ & $1.11$ & $\phantom{0}87.02$ & $2430$ & $0$ & $2430$ & $0$ & $0.98$ & $\phantom{0}31.94$ \\
      \midrule
      SEP: TWO SUBLOOPS  & $\phantom{00}49$ & $0$ & $\phantom{00}49$ & $0$ & $0.07$ & $266.67$ & $2708$ & $0$ & $2708$ & $0$ & $0.95$ & $\phantom{0}60.62$ & $2430$ & $0$ & $2430$ & $0$ & $1.00$ & $\phantom{0}34.99$ \\
      \midrule
      BRANCH HEUR=PB  & $\phantom{00}49$ & $0$ & $\phantom{00}49$ & $0$ & $0.05$ & $175.27$ & $2708$ & $0$ & $2708$ & $0$ & $0.59$ & $\phantom{00}0.00$ & $2430$ & $0$ & $2430$ & $0$ & $1.41$ & $\phantom{0}90.47$ \\
      BRANCH HEUR=VP - EA4OP  & $\phantom{00}49$ & $0$ & $\phantom{00}49$ & $0$ & $0.04$ & $119.35$ & $2708$ & $0$ & $2708$ & $0$ & $0.66$ & $\phantom{0}11.22$ & $2430$ & $0$ & $2430$ & $0$ & $0.96$ & $\phantom{0}29.35$ \\
      \bottomrule
    \end{tabular}
    \caption{pr76.}\label{tab:pr76-1}
  \end{table}

  \begin{table}[htb!]
    \centering
    \tiny
    \begin{tabular}{lcccccccccccccccccc}
      \toprule
 & \multicolumn{18}{c}{Gen} \\ \cmidrule(lr){2-19}
 & \multicolumn{6}{c}{Gen1} & \multicolumn{6}{c}{Gen2} & \multicolumn{6}{c}{Gen3} \\ \cmidrule(lr){2-7}\cmidrule(lr){8-13}\cmidrule(lr){14-19}
 & \multicolumn{2}{c}{LB} & \multicolumn{2}{c}{UB} & \multicolumn{2}{c}{Time} & \multicolumn{2}{c}{LB} & \multicolumn{2}{c}{UB} & \multicolumn{2}{c}{Time} & \multicolumn{2}{c}{LB} & \multicolumn{2}{c}{UB} & \multicolumn{2}{c}{Time} \\ \cmidrule(lr){2-3}\cmidrule(lr){4-5}\cmidrule(lr){6-7}\cmidrule(lr){8-9}\cmidrule(lr){10-11}\cmidrule(lr){12-13}\cmidrule(lr){14-15}\cmidrule(lr){16-17}\cmidrule(lr){18-19}
      Strategy  & Best & Gap & Best & Gap & Mean & Gap & Best & Gap & Best & Gap & Mean & Gap & Best & Gap & Best & Gap & Mean & \multicolumn{1}{c}{Gap} \\
      \midrule
      REFERENCE  & $\phantom{00}363$ & $0.00$ & $\phantom{00}363$ & $0.00$ & $\phantom{00}359.51$ & $1031.58$ & $19633$ & $0.06$ & $19801$ & $0.01$ & $18000.00$ & $\phantom{000}0.00$ & $15498$ & $0.00$ & $15498$ & $0.00$ & $\phantom{00}166.80$ & $\phantom{00}29.99$ \\
      \midrule
      - SRK  & $\phantom{00}363$ & $0.00$ & $\phantom{00}363$ & $0.00$ & $\phantom{00}643.50$ & $1925.50$ & $19635$ & $0.05$ & $19800$ & $0.01$ & $18000.00$ & $\phantom{000}0.00$ & $15498$ & $0.00$ & $15498$ & $0.00$ & $\phantom{00}219.86$ & $\phantom{00}71.34$ \\
      SRK=C1C2S3  & $\phantom{00}363$ & $0.00$ & $\phantom{00}363$ & $0.00$ & $\phantom{00}120.89$ & $\phantom{0}280.53$ & $19634$ & $0.05$ & $19800$ & $0.01$ & $18000.00$ & $\phantom{000}0.00$ & $15498$ & $0.00$ & $15498$ & $0.00$ & $\phantom{00}284.01$ & $\phantom{0}121.34$ \\
      - CC STRATS  & $\phantom{00}363$ & $0.00$ & $\phantom{00}363$ & $0.00$ & $\phantom{00}118.09$ & $\phantom{0}271.70$ & $19633$ & $0.06$ & $19802$ & $0.02$ & $18000.00$ & $\phantom{000}0.00$ & $15498$ & $0.00$ & $15498$ & $0.00$ & $\phantom{0}2696.49$ & $2001.40$ \\
      \midrule
      - EPH BLOSSOM  & $\phantom{00}363$ & $0.00$ & $\phantom{00}363$ & $0.00$ & $\phantom{000}31.77$ & $\phantom{000}0.00$ & $19643$ & $0.01$ & $19801$ & $0.01$ & $18000.00$ & $\phantom{000}0.00$ & $15498$ & $0.00$ & $15498$ & $0.00$ & $\phantom{00}316.47$ & $\phantom{0}146.63$ \\
      - EGH BLOSSOM  & $\phantom{00}363$ & $0.00$ & $\phantom{00}363$ & $0.00$ & $\phantom{00}420.83$ & $1224.61$ & $19634$ & $0.05$ & $19801$ & $0.01$ & $18000.00$ & $\phantom{000}0.00$ & $15498$ & $0.00$ & $15498$ & $0.00$ & $\phantom{00}252.53$ & $\phantom{00}96.80$ \\
      + FST BLOSSOM  & $\phantom{00}363$ & $0.00$ & $\phantom{00}363$ & $0.00$ & $\phantom{00}423.05$ & $1231.61$ & $19644$ & $0.00$ & $19801$ & $0.01$ & $18000.00$ & $\phantom{000}0.00$ & $15498$ & $0.00$ & $15498$ & $0.00$ & $\phantom{00}210.91$ & $\phantom{00}64.36$ \\
      \midrule
      - EDGE COVER  & $\phantom{00}363$ & $0.00$ & $\phantom{00}363$ & $0.00$ & $\phantom{00}176.79$ & $\phantom{0}456.47$ & $19636$ & $0.04$ & $19800$ & $0.01$ & $18000.00$ & $\phantom{000}0.00$ & $15498$ & $0.00$ & $15498$ & $0.00$ & $\phantom{00}180.40$ & $\phantom{00}40.59$ \\
      - CYCLE COVER  & $\phantom{00}363$ & $0.00$ & $\phantom{00}363$ & $0.00$ & $\phantom{00}110.11$ & $\phantom{0}246.59$ & $19642$ & $0.01$ & $19801$ & $0.01$ & $18000.00$ & $\phantom{000}0.00$ & $15498$ & $0.00$ & $15498$ & $0.00$ & $\phantom{00}221.88$ & $\phantom{00}72.91$ \\
      - PATH  & $\phantom{00}363$ & $0.00$ & $\phantom{00}363$ & $0.00$ & $\phantom{00}252.18$ & $\phantom{0}693.77$ & $19629$ & $0.08$ & $19801$ & $0.01$ & $18000.00$ & $\phantom{000}0.00$ & $15498$ & $0.00$ & $15498$ & $0.00$ & $\phantom{00}212.04$ & $\phantom{00}65.25$ \\
      + VERTEX COVER  & $\phantom{00}363$ & $0.00$ & $\phantom{00}363$ & $0.00$ & $\phantom{000}81.69$ & $\phantom{0}157.14$ & $19637$ & $0.04$ & $19799$ & $0.00$ & $18000.00$ & $\phantom{000}0.00$ & $15498$ & $0.00$ & $15498$ & $0.00$ & $\phantom{00}305.62$ & $\phantom{0}138.18$ \\
      \midrule
      SEP: TWO SUBLOOPS  & $\phantom{00}363$ & $0.00$ & $\phantom{00}363$ & $0.00$ & $\phantom{00}300.17$ & $\phantom{0}844.81$ & $19631$ & $0.07$ & $19801$ & $0.01$ & $18000.00$ & $\phantom{000}0.00$ & $15498$ & $0.00$ & $15498$ & $0.00$ & $\phantom{00}146.51$ & $\phantom{00}14.18$ \\
      \midrule
      BRANCH HEUR=PB  & $\phantom{00}363$ & $0.00$ & $\phantom{00}363$ & $0.00$ & $\phantom{00}190.93$ & $\phantom{0}500.97$ & $19611$ & $0.17$ & $19800$ & $0.01$ & $18000.00$ & $\phantom{000}0.00$ & $15498$ & $0.00$ & $15498$ & $0.00$ & $\phantom{00}194.63$ & $\phantom{00}51.68$ \\
      BRANCH HEUR=VP - EA4OP  & $\phantom{00}363$ & $0.00$ & $\phantom{00}363$ & $0.00$ & $\phantom{00}270.75$ & $\phantom{0}752.20$ & $19619$ & $0.13$ & $19801$ & $0.01$ & $18000.00$ & $\phantom{000}0.00$ & $15498$ & $0.00$ & $15498$ & $0.00$ & $\phantom{0}1000.74$ & $\phantom{0}679.89$ \\
      \bottomrule
    \end{tabular}
    \caption{att532.}\label{tab:att532-1}
  \end{table}

  \begin{table}[htb!]
    \centering
    \tiny
    \begin{tabular}{lcccccccccccccccccc}
      \toprule
 & \multicolumn{18}{c}{Gen} \\ \cmidrule(lr){2-19}
 & \multicolumn{6}{c}{Gen1} & \multicolumn{6}{c}{Gen2} & \multicolumn{6}{c}{Gen3} \\ \cmidrule(lr){2-7}\cmidrule(lr){8-13}\cmidrule(lr){14-19}
 & \multicolumn{2}{c}{LB} & \multicolumn{2}{c}{UB} & \multicolumn{2}{c}{Time} & \multicolumn{2}{c}{LB} & \multicolumn{2}{c}{UB} & \multicolumn{2}{c}{Time} & \multicolumn{2}{c}{LB} & \multicolumn{2}{c}{UB} & \multicolumn{2}{c}{Time} \\ \cmidrule(lr){2-3}\cmidrule(lr){4-5}\cmidrule(lr){6-7}\cmidrule(lr){8-9}\cmidrule(lr){10-11}\cmidrule(lr){12-13}\cmidrule(lr){14-15}\cmidrule(lr){16-17}\cmidrule(lr){18-19}
      Strategy  & Best & Gap & Best & Gap & Mean & Gap & Best & Gap & Best & Gap & Mean & Gap & Best & Gap & Best & Gap & Mean & \multicolumn{1}{c}{Gap} \\
      \midrule
      REFERENCE  & $\phantom{00}777$ & $0.00$ & $\phantom{00}777$ & $0.00$ & $\phantom{0}5378.5$ & $144.79$ & $40770$ & $0.02$ & $40954$ & $0.02$ & $18000.0$ & $\phantom{00}0.00$ & $37669$ & $0.00$ & $37669$ & $0.00$ & $\phantom{0}4735.9$ & $150.57$ \\
      \midrule
      - SRK  & $\phantom{00}777$ & $0.00$ & $\phantom{00}777$ & $0.00$ & $\phantom{0}9969.9$ & $353.75$ & $40777$ & $0.00$ & $40952$ & $0.01$ & $18000.0$ & $\phantom{00}0.00$ & $37669$ & $0.00$ & $37669$ & $0.00$ & $12469.7$ & $559.74$ \\
      SRK=C1C2S3  & $\phantom{00}777$ & $0.00$ & $\phantom{00}777$ & $0.00$ & $\phantom{0}2731.9$ & $\phantom{0}24.34$ & $40765$ & $0.03$ & $40953$ & $0.02$ & $18000.0$ & $\phantom{00}0.00$ & $37669$ & $0.00$ & $37669$ & $0.00$ & $\phantom{0}6725.9$ & $255.85$ \\
      - CC STRATS  & $\phantom{00}777$ & $0.00$ & $\phantom{00}777$ & $0.00$ & $\phantom{0}4937.8$ & $124.73$ & $40772$ & $0.01$ & $40953$ & $0.02$ & $18000.0$ & $\phantom{00}0.00$ & $37669$ & $0.00$ & $37669$ & $0.00$ & $\phantom{0}5828.3$ & $208.36$ \\
      \midrule
      - EPH BLOSSOM  & $\phantom{00}777$ & $0.00$ & $\phantom{00}777$ & $0.00$ & $13669.6$ & $522.14$ & $40777$ & $0.00$ & $41006$ & $0.15$ & $18000.0$ & $\phantom{00}0.00$ & $37665$ & $0.01$ & $37758$ & $0.24$ & $18000.0$ & $852.33$ \\
      - EGH BLOSSOM  & $\phantom{00}777$ & $0.00$ & $\phantom{00}777$ & $0.00$ & $\phantom{0}7073.6$ & $221.94$ & $40773$ & $0.01$ & $40948$ & $0.00$ & $18000.0$ & $\phantom{00}0.00$ & $37669$ & $0.00$ & $37669$ & $0.00$ & $\phantom{0}8161.1$ & $331.78$ \\
      + FST BLOSSOM  & $\phantom{00}777$ & $0.00$ & $\phantom{00}777$ & $0.00$ & $\phantom{0}2197.2$ & $\phantom{00}0.00$ & $40775$ & $0.00$ & $40946$ & $0.00$ & $18000.0$ & $\phantom{00}0.00$ & $37669$ & $0.00$ & $37669$ & $0.00$ & $\phantom{0}6688.2$ & $253.86$ \\
      \midrule
      - EDGE COVER  & $\phantom{00}777$ & $0.00$ & $\phantom{00}777$ & $0.00$ & $\phantom{0}3303.3$ & $\phantom{0}50.34$ & $40773$ & $0.01$ & $40954$ & $0.02$ & $18000.0$ & $\phantom{00}0.00$ & $37669$ & $0.00$ & $37669$ & $0.00$ & $\phantom{0}1890.1$ & $\phantom{00}0.00$ \\
      - CYCLE COVER  & $\phantom{00}777$ & $0.00$ & $\phantom{00}777$ & $0.00$ & $\phantom{0}4072.0$ & $\phantom{0}85.33$ & $40775$ & $0.00$ & $40950$ & $0.01$ & $18000.0$ & $\phantom{00}0.00$ & $37669$ & $0.00$ & $37669$ & $0.00$ & $\phantom{0}4485.4$ & $137.31$ \\
      - PATH  & $\phantom{00}777$ & $0.00$ & $\phantom{00}777$ & $0.00$ & $\phantom{0}4103.7$ & $\phantom{0}86.77$ & $40775$ & $0.00$ & $40952$ & $0.01$ & $18000.0$ & $\phantom{00}0.00$ & $37669$ & $0.00$ & $37669$ & $0.00$ & $\phantom{0}7045.8$ & $272.77$ \\
      + VERTEX COVER  & $\phantom{00}777$ & $0.00$ & $\phantom{00}777$ & $0.00$ & $\phantom{0}3165.5$ & $\phantom{0}44.07$ & $40777$ & $0.00$ & $40953$ & $0.02$ & $18000.0$ & $\phantom{00}0.00$ & $37669$ & $0.00$ & $37669$ & $0.00$ & $\phantom{0}8580.9$ & $353.99$ \\
      \midrule
      SEP: TWO SUBLOOPS  & $\phantom{00}777$ & $0.00$ & $\phantom{00}777$ & $0.00$ & $\phantom{0}5145.1$ & $134.17$ & $40773$ & $0.01$ & $40950$ & $0.01$ & $18000.0$ & $\phantom{00}0.00$ & $37669$ & $0.00$ & $37669$ & $0.00$ & $16501.5$ & $773.05$ \\
      \midrule
      BRANCH HEUR=PB  & $\phantom{00}777$ & $0.00$ & $\phantom{00}777$ & $0.00$ & $\phantom{0}2596.3$ & $\phantom{0}18.16$ & $40767$ & $0.02$ & $40955$ & $0.02$ & $18000.0$ & $\phantom{00}0.00$ & $37669$ & $0.00$ & $37669$ & $0.00$ & $\phantom{0}5133.0$ & $171.57$ \\
      BRANCH HEUR=VP - EA4OP  & $\phantom{00}777$ & $0.00$ & $\phantom{00}777$ & $0.00$ & $\phantom{0}3767.8$ & $\phantom{0}71.48$ & $40763$ & $0.03$ & $40956$ & $0.02$ & $18000.0$ & $\phantom{00}0.00$ & $37669$ & $0.00$ & $37669$ & $0.00$ & $\phantom{0}4421.5$ & $133.93$ \\
      \bottomrule
    \end{tabular}
    \caption{vm1084.}\label{vm1084-1}
  \end{table}

  \begin{table}[htb!]
    \centering
    \tiny
    \begin{tabular}{lcccccccccccccccccc}
      \toprule
 & \multicolumn{18}{c}{Gen} \\ \cmidrule(lr){2-19}
 & \multicolumn{6}{c}{Gen1} & \multicolumn{6}{c}{Gen2} & \multicolumn{6}{c}{Gen3} \\ \cmidrule(lr){2-7}\cmidrule(lr){8-13}\cmidrule(lr){14-19}
 & \multicolumn{2}{c}{LB} & \multicolumn{2}{c}{UB} & \multicolumn{2}{c}{Time} & \multicolumn{2}{c}{LB} & \multicolumn{2}{c}{UB} & \multicolumn{2}{c}{Time} & \multicolumn{2}{c}{LB} & \multicolumn{2}{c}{UB} & \multicolumn{2}{c}{Time} \\ \cmidrule(lr){2-3}\cmidrule(lr){4-5}\cmidrule(lr){6-7}\cmidrule(lr){8-9}\cmidrule(lr){10-11}\cmidrule(lr){12-13}\cmidrule(lr){14-15}\cmidrule(lr){16-17}\cmidrule(lr){18-19}
      Strategy  & Best & Gap & Best & Gap & Mean & Gap & Best & Gap & Best & Gap & Mean & Gap & Best & Gap & Best & Gap & Mean & \multicolumn{1}{c}{Gap} \\
      \midrule
      REFERENCE  & $\phantom{00}814$ & $0.00$ & $\phantom{00}814$ & $0.00$ & $\phantom{0}3565.7$ & $\phantom{0}10.26$ & $43377$ & $0.00$ & $43454$ & $0.18$ & $18000.0$ & $\phantom{0}24.62$ & $47162$ & $0.11$ & $47373$ & $0.00$ & $18000.0$ & $\phantom{00}0.00$ \\
      \midrule
      - SRK  & $\phantom{00}814$ & $0.00$ & $\phantom{00}814$ & $0.00$ & $11747.3$ & $263.25$ & $43378$ & $0.00$ & $43452$ & $0.17$ & $18000.0$ & $\phantom{0}24.62$ & $47195$ & $0.04$ & $47408$ & $0.08$ & $18000.0$ & $\phantom{00}0.00$ \\
      SRK=C1C2S3  & $\phantom{00}814$ & $0.00$ & $\phantom{00}814$ & $0.00$ & $\phantom{0}4039.8$ & $\phantom{0}24.92$ & $43378$ & $0.00$ & $43457$ & $0.18$ & $18000.0$ & $\phantom{0}24.62$ & $47212$ & $0.01$ & $47382$ & $0.02$ & $18000.0$ & $\phantom{00}0.00$ \\
      - CC STRATS  & $\phantom{00}814$ & $0.00$ & $\phantom{00}814$ & $0.00$ & $\phantom{0}5121.9$ & $\phantom{0}58.38$ & $43378$ & $0.00$ & $43378$ & $0.00$ & $17145.6$ & $\phantom{0}18.71$ & $47213$ & $0.00$ & $47386$ & $0.03$ & $18000.0$ & $\phantom{00}0.00$ \\
      \midrule
      - EPH BLOSSOM  & $\phantom{00}814$ & $0.00$ & $\phantom{00}819$ & $0.61$ & $18000.0$ & $456.60$ & $43371$ & $0.02$ & $43543$ & $0.38$ & $18000.0$ & $\phantom{0}24.62$ & $47075$ & $0.30$ & $47698$ & $0.69$ & $18000.0$ & $\phantom{00}0.00$ \\
      - EGH BLOSSOM  & $\phantom{00}814$ & $0.00$ & $\phantom{00}814$ & $0.00$ & $\phantom{0}4431.3$ & $\phantom{0}37.02$ & $43377$ & $0.00$ & $43455$ & $0.18$ & $18000.0$ & $\phantom{0}24.62$ & $47190$ & $0.05$ & $47394$ & $0.05$ & $18000.0$ & $\phantom{00}0.00$ \\
      + FST BLOSSOM  & $\phantom{00}814$ & $0.00$ & $\phantom{00}814$ & $0.00$ & $\phantom{0}6341.4$ & $\phantom{0}96.09$ & $43378$ & $0.00$ & $43378$ & $0.00$ & $15722.3$ & $\phantom{00}8.85$ & $47200$ & $0.03$ & $47371$ & $0.00$ & $18000.0$ & $\phantom{00}0.00$ \\
      \midrule
      - EDGE COVER  & $\phantom{00}814$ & $0.00$ & $\phantom{00}814$ & $0.00$ & $\phantom{0}6401.6$ & $\phantom{0}97.95$ & $43273$ & $0.24$ & $43456$ & $0.18$ & $18000.0$ & $\phantom{0}24.62$ & $47109$ & $0.22$ & $47381$ & $0.02$ & $18000.0$ & $\phantom{00}0.00$ \\
      - CYCLE COVER  & $\phantom{00}814$ & $0.00$ & $\phantom{00}814$ & $0.00$ & $\phantom{0}7045.5$ & $117.86$ & $43378$ & $0.00$ & $43449$ & $0.16$ & $18000.0$ & $\phantom{0}24.62$ & $47193$ & $0.05$ & $47385$ & $0.03$ & $18000.0$ & $\phantom{00}0.00$ \\
      - PATH  & $\phantom{00}814$ & $0.00$ & $\phantom{00}814$ & $0.00$ & $\phantom{0}3965.2$ & $\phantom{0}22.61$ & $43378$ & $0.00$ & $43446$ & $0.16$ & $18000.0$ & $\phantom{0}24.62$ & $47201$ & $0.03$ & $47379$ & $0.02$ & $18000.0$ & $\phantom{00}0.00$ \\
      + VERTEX COVER  & $\phantom{00}814$ & $0.00$ & $\phantom{00}814$ & $0.00$ & $\phantom{0}3233.9$ & $\phantom{00}0.00$ & $43377$ & $0.00$ & $43450$ & $0.17$ & $18000.0$ & $\phantom{0}24.62$ & $47171$ & $0.09$ & $47379$ & $0.02$ & $18000.0$ & $\phantom{00}0.00$ \\
      \midrule
      SEP: TWO SUBLOOPS  & $\phantom{00}814$ & $0.00$ & $\phantom{00}814$ & $0.00$ & $13939.5$ & $331.04$ & $43373$ & $0.01$ & $43451$ & $0.17$ & $18000.0$ & $\phantom{0}24.62$ & $47196$ & $0.04$ & $47378$ & $0.01$ & $18000.0$ & $\phantom{00}0.00$ \\
      \midrule
      BRANCH HEUR=PB  & $\phantom{00}814$ & $0.00$ & $\phantom{00}814$ & $0.00$ & $\phantom{0}9743.9$ & $201.30$ & $43378$ & $0.00$ & $43378$ & $0.00$ & $16153.4$ & $\phantom{0}11.84$ & $47215$ & $0.00$ & $47387$ & $0.03$ & $18000.0$ & $\phantom{00}0.00$ \\
      BRANCH HEUR=VP - EA4OP  & $\phantom{00}814$ & $0.00$ & $\phantom{00}814$ & $0.00$ & $\phantom{0}8707.5$ & $169.25$ & $43378$ & $0.00$ & $43449$ & $0.16$ & $18000.0$ & $\phantom{0}24.62$ & $47195$ & $0.04$ & $47376$ & $0.01$ & $18000.0$ & $\phantom{00}0.00$ \\
      \bottomrule
    \end{tabular}
    \caption{rl1323.}\label{rl1323-1}
  \end{table}

  \begin{table}[htb!]
    \centering
    \tiny
    \begin{tabular}{lcccccccccccccccccc}
      \toprule
 & \multicolumn{18}{c}{Gen} \\ \cmidrule(lr){2-19}
 & \multicolumn{6}{c}{Gen1} & \multicolumn{6}{c}{Gen2} & \multicolumn{6}{c}{Gen3} \\ \cmidrule(lr){2-7}\cmidrule(lr){8-13}\cmidrule(lr){14-19}
 & \multicolumn{2}{c}{LB} & \multicolumn{2}{c}{UB} & \multicolumn{2}{c}{Time} & \multicolumn{2}{c}{LB} & \multicolumn{2}{c}{UB} & \multicolumn{2}{c}{Time} & \multicolumn{2}{c}{LB} & \multicolumn{2}{c}{UB} & \multicolumn{2}{c}{Time} \\ \cmidrule(lr){2-3}\cmidrule(lr){4-5}\cmidrule(lr){6-7}\cmidrule(lr){8-9}\cmidrule(lr){10-11}\cmidrule(lr){12-13}\cmidrule(lr){14-15}\cmidrule(lr){16-17}\cmidrule(lr){18-19}
      Strategy  & Best & Gap & Best & Gap & Mean & Gap & Best & Gap & Best & Gap & Mean & Gap & Best & Gap & Best & Gap & Mean & \multicolumn{1}{c}{Gap} \\
      \midrule
      REFERENCE  & $\phantom{0}1276$ & $0.23$ & $\phantom{0}1282$ & $0.00$ & $18000$ & $0$ & $68013$ & $0.16$ & $68305$ & $0.01$ & $18000$ & $0$ & $71903$ & $0.01$ & $72018$ & $0.02$ & $18000$ & $0$ \\
      \midrule
      - SRK  & $\phantom{0}1271$ & $0.63$ & $\phantom{0}1282$ & $0.00$ & $18000$ & $0$ & $67812$ & $0.45$ & $68306$ & $0.01$ & $18000$ & $0$ & $71853$ & $0.08$ & $72012$ & $0.01$ & $18000$ & $0$ \\
      SRK=C1C2S3  & $\phantom{0}1278$ & $0.08$ & $\phantom{0}1282$ & $0.00$ & $18000$ & $0$ & $67863$ & $0.38$ & $68306$ & $0.01$ & $18000$ & $0$ & $71887$ & $0.03$ & $72010$ & $0.01$ & $18000$ & $0$ \\
      - CC STRATS  & $\phantom{0}1278$ & $0.08$ & $\phantom{0}1282$ & $0.00$ & $18000$ & $0$ & $68016$ & $0.15$ & $68304$ & $0.01$ & $18000$ & $0$ & $71894$ & $0.02$ & $72012$ & $0.01$ & $18000$ & $0$ \\
      \midrule
      - EPH BLOSSOM  & $\phantom{0}1273$ & $0.47$ & $\phantom{0}1284$ & $0.16$ & $18000$ & $0$ & $67735$ & $0.57$ & $68460$ & $0.23$ & $18000$ & $0$ & $71755$ & $0.21$ & $72118$ & $0.16$ & $18000$ & $0$ \\
      - EGH BLOSSOM  & $\phantom{0}1278$ & $0.08$ & $\phantom{0}1282$ & $0.00$ & $18000$ & $0$ & $68029$ & $0.14$ & $68311$ & $0.02$ & $18000$ & $0$ & $71854$ & $0.08$ & $72016$ & $0.02$ & $18000$ & $0$ \\
      + FST BLOSSOM  & $\phantom{0}1279$ & $0.00$ & $\phantom{0}1282$ & $0.00$ & $18000$ & $0$ & $67986$ & $0.20$ & $68300$ & $0.00$ & $18000$ & $0$ & $71773$ & $0.19$ & $72003$ & $0.00$ & $18000$ & $0$ \\
      \midrule
      - EDGE COVER  & $\phantom{0}1272$ & $0.55$ & $\phantom{0}1282$ & $0.00$ & $18000$ & $0$ & $67877$ & $0.36$ & $68306$ & $0.01$ & $18000$ & $0$ & $71873$ & $0.05$ & $72017$ & $0.02$ & $18000$ & $0$ \\
      - CYCLE COVER  & $\phantom{0}1275$ & $0.31$ & $\phantom{0}1282$ & $0.00$ & $18000$ & $0$ & $68055$ & $0.10$ & $68302$ & $0.00$ & $18000$ & $0$ & $71845$ & $0.09$ & $72014$ & $0.02$ & $18000$ & $0$ \\
      - PATH  & $\phantom{0}1274$ & $0.39$ & $\phantom{0}1282$ & $0.00$ & $18000$ & $0$ & $67831$ & $0.43$ & $68309$ & $0.01$ & $18000$ & $0$ & $71808$ & $0.14$ & $72013$ & $0.01$ & $18000$ & $0$ \\
      + VERTEX COVER  & $\phantom{0}1276$ & $0.23$ & $\phantom{0}1282$ & $0.00$ & $18000$ & $0$ & $68032$ & $0.13$ & $68300$ & $0.00$ & $18000$ & $0$ & $71883$ & $0.04$ & $72016$ & $0.02$ & $18000$ & $0$ \\
      \midrule
      SEP: TWO SUBLOOPS  & $\phantom{0}1276$ & $0.23$ & $\phantom{0}1282$ & $0.00$ & $18000$ & $0$ & $67967$ & $0.23$ & $68314$ & $0.02$ & $18000$ & $0$ & $71830$ & $0.11$ & $72017$ & $0.02$ & $18000$ & $0$ \\
      \midrule
      BRANCH HEUR=PB  & $\phantom{0}1274$ & $0.39$ & $\phantom{0}1282$ & $0.00$ & $18000$ & $0$ & $67830$ & $0.43$ & $68300$ & $0.00$ & $18000$ & $0$ & $71779$ & $0.18$ & $72017$ & $0.02$ & $18000$ & $0$ \\
      BRANCH HEUR=VP - EA4OP  & $\phantom{0}1278$ & $0.08$ & $\phantom{0}1282$ & $0.00$ & $18000$ & $0$ & $67981$ & $0.21$ & $68307$ & $0.01$ & $18000$ & $0$ & $71890$ & $0.03$ & $72016$ & $0.02$ & $18000$ & $0$ \\
      \bottomrule
    \end{tabular}
    \caption{vm1748.}\label{vm1748-1}
  \end{table}
\end{landscape}

\section{Comparison with state-of-the-art Algorithms: Detailed Results}\label{appendix:comp}

In this appendix, we detail the experimental results for the four algorithms (FST B\&C, EA4OP, ALNS and RB\&C). Table \ref{tab:best-II-1-medium} shows the results for medium-sized instances of generation 1, Table \ref{tab:best-II-1-big} for large-sized instances of  generation 1, Table \ref{tab:best-II-2-medium} for medium-sized instances of  generation 2, Table \ref{tab:best-II-2-big} for  large-sized instances of generation 2, Table \ref{tab:best-II-3-medium} for medium-sized instances of generation 3 and Table \ref{tab:best-II-3-big} for large-sized instances of generation 3.

In the Best column, we show the global best-known lower and upper-bound values. For each algorithm, we detail the best LB, the goodness gap GGap, the best UB, and the meantime (in seconds). The GGap represents the relative distance between the algorithm's best LB and the global best-known LB. For the RB\&C algorithm we also detail the optimality gap OGap which represents the relative distance between the obtained LB and UB by RB\&C.

For each algorithm, generation and size, we have calculated the average gap and running time over the instances where a feasible solution was obtained by the algorithm. In those instances where the time limit was reached, a running time of 5 hours has been used. These averages are shown in the last row of the tables.
The symbols in the tables mean the following:
\begin{itemize}
  \item[\;$*$\;:] best-known solution achieved
  \item[$-$\;:] not comparable result
  \item[\;.\;:] the code finished unexpectedly
\end{itemize}

\begin{landscape}

  \begin{table}[p!]
    \centering
    \caption{Generation 1, $n\le 400$}
    \label{tab:best-II-1-medium}
    \begin{scriptsize}
\begin{tabular}{rrrrrrrrrrrrrrrrrr}
  \toprule \multicolumn{1}{c}{} &\multicolumn{2}{c}{Best} &
                        \multicolumn{4}{c}{FST} &
                        \multicolumn{3}{c}{EA4OP} &
                        \multicolumn{3}{c}{ALNS} &
                        \multicolumn{5}{c}{RB\&C} \\
                       \cmidrule(lr){2-3}\cmidrule(lr){4-7}\cmidrule(lr){8-10}\cmidrule(lr){11-13}\cmidrule(lr){14-18}\   Instance & LB & UB
                          & LB & GGap & UB & Time
                          & LB & GGap & Time
                          & LB & GGap & Time
                          & LB & GGap & UB & OGap & Time\\
                          \midrule att48 & 31 & 31 & \textbf{31} & * & \textbf{31} & \textbf{0.00} & \textbf{31} & * & 0.25 & \textbf{31} & * & 6.77 & \textbf{31} & * & \textbf{31} & * & \textbf{0.03} \\ 
  gr48 & 31 & 31 & \textbf{31} & * & \textbf{31} & \textbf{0.00} & \textbf{31} & * & 0.13 & \textbf{31} & * & 9.99 & \textbf{31} & * & \textbf{31} & * & \textbf{0.02} \\ 
  hk48 & 30 & 30 & \textbf{30} & * & \textbf{30} & \textbf{0.00} & \textbf{30} & * & 0.24 & \textbf{30} & * & 7.20 & \textbf{30} & * & \textbf{30} & * & \textbf{0.01} \\ 
  eil51 & 29 & 29 & \textbf{29} & * & \textbf{29} & \textbf{0.00} & \textbf{29} & * & 0.24 & \textbf{29} & * & 9.51 & \textbf{29} & * & \textbf{29} & * & \textbf{0.01} \\ 
  berlin52 & 37 & 37 & \textbf{37} & * & \textbf{37} & \textbf{0.00} & \textbf{37} & * & 0.30 & \textbf{37} & * & 9.42 & \textbf{37} & * & \textbf{37} & * & \textbf{0.02} \\ 
  brazil58 & 46 & 46 & \textbf{46} & * & \textbf{46} & \textbf{0.00} & \textbf{46} & * & 1.00 & \textbf{46} & * & 9.13 & \textbf{46} & * & \textbf{46} & * & 0.07 \\ 
  st70 & 43 & 43 & \textbf{43} & * & \textbf{43} & 0.10 & \textbf{43} & * & 0.32 & \textbf{43} & * & 15.99 & \textbf{43} & * & \textbf{43} & * & \textbf{0.05} \\ 
  eil76 & 47 & 47 & \textbf{47} & * & \textbf{47} & 0.10 & 46 & 2.13 & 0.33 & \textbf{47} & * & 20.51 & \textbf{47} & * & \textbf{47} & * & \textbf{0.04} \\ 
  pr76 & 49 & 49 & \textbf{49} & * & \textbf{49} & 0.10 & \textbf{49} & * & 0.61 & \textbf{49} & * & 18.64 & \textbf{49} & * & \textbf{49} & * & \textbf{0.06} \\ 
  gr96 & 64 & 64 & \textbf{64} & * & \textbf{64} & 0.10 & \textbf{64} & * & 1.44 & \textbf{64} & * & 20.31 & \textbf{64} & * & \textbf{64} & * & \textbf{0.08} \\ 
  rat99 & 52 & 52 & \textbf{52} & * & \textbf{52} & \textbf{0.40} & \textbf{52} & * & 0.66 & \textbf{52} & * & 27.75 & \textbf{52} & * & \textbf{52} & * & 0.47 \\ 
  kroA100 & 56 & 56 & \textbf{56} & * & \textbf{56} & \textbf{0.40} & 55 & 1.79 & 0.34 & \textbf{56} & * & 34.75 & \textbf{56} & * & \textbf{56} & * & \textbf{0.41} \\ 
  kroB100 & 58 & 58 & \textbf{58} & * & \textbf{58} & 95.40 & 57 & 1.72 & 0.63 & \textbf{58} & * & 43.06 & \textbf{58} & * & \textbf{58} & * & \textbf{0.27} \\ 
  kroC100 & 56 & 56 & \textbf{56} & * & \textbf{56} & 0.40 & \textbf{56} & * & 0.48 & \textbf{56} & * & 34.32 & \textbf{56} & * & \textbf{56} & * & \textbf{0.25} \\ 
  kroD100 & 59 & 59 & \textbf{59} & * & \textbf{59} & 0.10 & 58 & 1.69 & 0.65 & \textbf{59} & * & 34.61 & \textbf{59} & * & \textbf{59} & * & \textbf{0.09} \\ 
  kroE100 & 57 & 57 & \textbf{57} & * & \textbf{57} & 159.20 & \textbf{57} & * & 0.50 & \textbf{57} & * & 32.26 & \textbf{57} & * & \textbf{57} & * & \textbf{5.53} \\ 
  rd100 & 61 & 61 & \textbf{61} & * & \textbf{61} & 0.20 & \textbf{61} & * & 0.74 & \textbf{61} & * & 29.49 & \textbf{61} & * & \textbf{61} & * & \textbf{0.12} \\ 
  eil101 & 64 & 64 & \textbf{64} & * & \textbf{64} & 0.10 & \textbf{64} & * & 0.79 & \textbf{64} & * & 31.73 & \textbf{64} & * & \textbf{64} & * & \textbf{0.06} \\ 
  lin105 & 66 & 66 & \textbf{66} & * & \textbf{66} & \textbf{0.30} & \textbf{66} & * & 1.42 & \textbf{66} & * & 32.11 & \textbf{66} & * & \textbf{66} & * & 0.48 \\ 
  pr107 & 54 & 54 & \textbf{54} & * & \textbf{54} & 0.30 & \textbf{54} & * & 0.93 & \textbf{54} & * & 78.46 & \textbf{54} & * & \textbf{54} & * & \textbf{0.08} \\ 
  gr120 & 75 & 75 & \textbf{75} & * & \textbf{75} & \textbf{0.10} & 74 & 1.33 & 1.20 & \textbf{75} & * & 29.58 & \textbf{75} & * & \textbf{75} & * & 0.28 \\ 
  pr124 & 75 & 75 & \textbf{75} & * & \textbf{75} & \textbf{0.30} & \textbf{75} & * & 1.11 & \textbf{75} & * & 49.64 & \textbf{75} & * & \textbf{75} & * & 0.35 \\ 
  bier127 & 103 & 103 & \textbf{103} & * & \textbf{103} & \textbf{0.30} & \textbf{103} & * & 1.18 & \textbf{103} & * & 40.84 & \textbf{103} & * & \textbf{103} & * & 0.38 \\ 
  pr136 & 71 & 71 & \textbf{71} & * & \textbf{71} & \textbf{1.40} & \textbf{71} & * & 0.96 & \textbf{71} & * & 29.97 & \textbf{71} & * & \textbf{71} & * & 1.75 \\ 
  gr137 & 81 & 81 & \textbf{81} & * & \textbf{81} & 1.50 & 78 & 3.70 & 3.44 & \textbf{81} & * & 59.21 & \textbf{81} & * & \textbf{81} & * & \textbf{0.24} \\ 
  pr144 & 77 & 77 & \textbf{77} & * & \textbf{77} & \textbf{1.30} & \textbf{77} & * & 2.61 & \textbf{77} & * & 87.82 & \textbf{77} & * & \textbf{77} & * & 1.46 \\ 
  kroA150 & 86 & 86 & \textbf{86} & * & \textbf{86} & 175.40 & \textbf{86} & * & 1.17 & \textbf{86} & * & 82.79 & \textbf{86} & * & \textbf{86} & * & \textbf{33.87} \\ 
  kroB150 & 87 & 87 & \textbf{87} & * & \textbf{87} & \textbf{1.20} & 86 & 1.15 & 1.00 & \textbf{87} & * & 61.64 & \textbf{87} & * & \textbf{87} & * & 2.21 \\ 
  pr152 & 77 & 77 & \textbf{77} & * & \textbf{77} & 1.40 & \textbf{77} & * & 3.64 & \textbf{77} & * & 91.38 & \textbf{77} & * & \textbf{77} & * & \textbf{1.29} \\ 
  u159 & 93 & 93 & \textbf{93} & * & \textbf{93} & 3.40 & 92 & 1.08 & 1.11 & \textbf{93} & * & 99.63 & \textbf{93} & * & \textbf{93} & * & \textbf{1.82} \\ 
  rat195 & 102 & 102 & \textbf{102} & * & \textbf{102} & \textbf{2.60} & 99 & 2.94 & 1.78 & \textbf{102} & * & 195.57 & \textbf{102} & * & \textbf{102} & * & 3.71 \\ 
  d198 & 123 & 123 & \textbf{123} & * & \textbf{123} & \textbf{3.20} & \textbf{123} & * & 6.68 & \textbf{123} & * & 65.57 & \textbf{123} & * & \textbf{123} & * & 5.28 \\ 
  kroA200 & 117 & 117 & \textbf{117} & * & \textbf{117} & \textbf{1.20} & \textbf{117} & * & 1.74 & \textbf{117} & * & 114.75 & \textbf{117} & * & \textbf{117} & * & 2.50 \\ 
  kroB200 & 119 & 119 & \textbf{119} & * & \textbf{119} & 14.10 & \textbf{119} & * & 1.67 & \textbf{119} & * & 86.58 & \textbf{119} & * & \textbf{119} & * & \textbf{9.91} \\ 
  gr202 & 145 & 145 & \textbf{145} & * & \textbf{145} & 12.70 & \textbf{145} & * & 6.89 & \textbf{145} & * & 187.56 & \textbf{145} & * & \textbf{145} & * & \textbf{2.71} \\ 
  ts225 & 124 & 124 & \textbf{124} & * & \textbf{124} & \textbf{10216.30} & \textbf{124} & * & 1.28 & \textbf{124} & * & 279.52 & \textbf{124} & * & 126 & 1.59 & 18000.00 \\ 
  tsp225 & 129 & 129 & \textbf{129} & * & \textbf{129} & 94.40 & 127 & 1.55 & 2.29 & 128 & 0.78 & 198.47 & \textbf{129} & * & \textbf{129} & * & \textbf{4.31} \\ 
  pr226 & 126 & 126 & \textbf{126} & * & \textbf{126} & 166.20 & \textbf{126} & * & 6.61 & \textbf{126} & * & 181.94 & \textbf{126} & * & \textbf{126} & * & \textbf{107.69} \\ 
  gr229 & 176 & 176 & \textbf{176} & * & \textbf{176} & 0.90 & \textbf{176} & * & 8.81 & 173 & 1.70 & 108.27 & \textbf{176} & * & \textbf{176} & * & \textbf{0.32} \\ 
  gil262 & 158 & 158 & \textbf{158} & * & \textbf{158} & 0.90 & 156 & 1.27 & 2.83 & \textbf{158} & * & 240.02 & \textbf{158} & * & \textbf{158} & * & \textbf{0.35} \\ 
  pr264 & 132 & 132 & \textbf{132} & * & \textbf{132} & 21.20 & \textbf{132} & * & 5.62 & \textbf{132} & * & 314.29 & \textbf{132} & * & \textbf{132} & * & \textbf{3.92} \\ 
  a280 & 147 & 147 & \textbf{147} & * & \textbf{147} & \textbf{13.60} & 143 & 2.72 & 3.00 & 144 & 2.04 & 239.06 & \textbf{147} & * & \textbf{147} & * & 40.65 \\ 
  pr299 & 162 & 162 & \textbf{162} & * & \textbf{162} & 111.50 & 160 & 1.23 & 3.12 & \textbf{162} & * & 410.90 & \textbf{162} & * & \textbf{162} & * & \textbf{48.85} \\ 
  lin318 & 205 & 205 & \textbf{205} & * & \textbf{205} & 22.40 & 202 & 1.46 & 7.15 & 203 & 0.98 & 294.23 & \textbf{205} & * & \textbf{205} & * & \textbf{5.49} \\ 
  rd400 & 239 & 239 & \textbf{239} & * & \textbf{239} & 37.40 & 234 & 2.09 & 6.59 & 237 & 0.84 & 422.56 & \textbf{239} & * & \textbf{239} & * & \textbf{36.71} \\ 
   \midrule\ average &  &  &  & * &  & \textbf{248.05} &  & 0.62 & 2.12 &  & 0.14 & 99.51 &  & * &  & 0.04 & 407.20 \\ 
   \bottomrule\end{tabular}

    \end{scriptsize}
  \end{table}

  \begin{table}[p!]
    \centering
    \caption{Generation 1, $n>400$}
    \label{tab:best-II-1-big}
    \begin{scriptsize}
\begin{tabular}{rrrrrrrrrrrrrrrrrr}
  \toprule \multicolumn{1}{c}{} &\multicolumn{2}{c}{Best} &
                        \multicolumn{4}{c}{FST} &
                        \multicolumn{3}{c}{EA4OP} &
                        \multicolumn{3}{c}{ALNS} &
                        \multicolumn{5}{c}{RB\&C} \\
                       \cmidrule(lr){2-3}\cmidrule(lr){4-7}\cmidrule(lr){8-10}\cmidrule(lr){11-13}\cmidrule(lr){14-18}\   Instance & LB & UB
                          & LB & GGap & UB & Time
                          & LB & GGap & Time
                          & LB & GGap & Time
                          & LB & GGap & UB & OGap & Time\\
                          \midrule fl417 & 228 & 230 & \textbf{228} & * & \textbf{230} & \textbf{18000.00} & 224 & 1.75 & 11.84 & \textbf{228} & * & 1056.07 & \textbf{228} & * & 231 & 1.30 & \textbf{18000.00} \\ 
  gr431 & 350 & 350 & \textbf{350} & * & \textbf{350} & 139.90 & 349 & 0.29 & 32.84 & 347 & 0.86 & 533.55 & \textbf{350} & * & \textbf{350} & * & \textbf{29.05} \\ 
  pr439 & 313 & 313 & \textbf{313} & * & \textbf{313} & 833.30 & 310 & 0.96 & 9.92 & 307 & 1.92 & 1263.74 & \textbf{313} & * & \textbf{313} & * & \textbf{414.00} \\ 
  pcb442 & 251 & 251 & \textbf{251} & * & \textbf{251} & 14.90 & 244 & 2.79 & 6.94 & 249 & 0.80 & 1328.72 & \textbf{251} & * & \textbf{251} & * & \textbf{7.21} \\ 
  d493 & 320 & 320 & \textbf{320} & * & \textbf{320} & 347.30 & 315 & 1.56 & 19.10 & 317 & 0.94 & 1291.93 & \textbf{320} & * & \textbf{320} & * & \textbf{13.37} \\ 
  att532 & 363 & 363 & \textbf{363} & * & \textbf{363} & 593.00 & 347 & 4.41 & 23.14 & 359 & 1.10 & 1380.54 & \textbf{363} & * & \textbf{363} & * & \textbf{312.50} \\ 
  ali535 & 425 & 426 & . & . & . & . & 424 & 0.24 & 73.03 & 422 & 0.71 & 1846.10 & \textbf{425} & * & \textbf{426} & 0.23 & \textbf{18000.00} \\ 
  pa561 & 357 & 357 & 356 & 0.28 & - & 2103.60 & 348 & 2.52 & 23.18 & 346 & 3.08 & 1605.42 & \textbf{357} & * & \textbf{357} & * & \textbf{245.42} \\ 
  u574 & 354 & 354 & \textbf{354} & * & \textbf{354} & 61.40 & 344 & 2.82 & 17.93 & 347 & 1.98 & 1204.18 & \textbf{354} & * & \textbf{354} & * & \textbf{24.00} \\ 
  rat575 & 322 & 322 & \textbf{322} & * & \textbf{322} & 59.50 & 309 & 4.04 & 13.76 & 317 & 1.55 & 3109.65 & \textbf{322} & * & \textbf{322} & * & \textbf{42.82} \\ 
  p654 & 343 & 396 & 327 & 4.66 & 553 & \textbf{18000.00} & 336 & 2.04 & 28.89 & \textbf{343} & * & 10866.70 & 342 & 0.29 & \textbf{396} & 13.64 & \textbf{18000.00} \\ 
  d657 & 386 & 386 & \textbf{386} & * & \textbf{386} & 715.70 & 377 & 2.33 & 23.24 & 380 & 1.55 & 3152.17 & \textbf{386} & * & \textbf{386} & * & \textbf{92.48} \\ 
  gr666 & 503 & 503 & \textbf{503} & * & \textbf{503} & 634.20 & 497 & 1.19 & 109.54 & 486 & 3.38 & 660.30 & \textbf{503} & * & \textbf{503} & * & \textbf{400.56} \\ 
  u724 & 439 & 439 & \textbf{439} & * & \textbf{439} & 1077.10 & 429 & 2.28 & 27.77 & 434 & 1.14 & 4157.30 & \textbf{439} & * & \textbf{439} & * & \textbf{188.61} \\ 
  rat783 & 438 & 438 & \textbf{438} & * & \textbf{438} & 594.30 & 422 & 3.65 & 34.59 & 428 & 2.28 & 2962.52 & \textbf{438} & * & \textbf{438} & * & \textbf{514.68} \\ 
  dsj1000 & 656 & 656 & . & . & . & . & 632 & 3.66 & 81.20 & 630 & 3.96 & 17284.30 & \textbf{656} & * & \textbf{656} & * & \textbf{3828.50} \\ 
  pr1002 & 606 & 606 & 604 & 0.33 & 608 & 18000.00 & 572 & 5.61 & 45.92 & 581 & 4.13 & 18000.00 & \textbf{606} & * & \textbf{606} & * & \textbf{4483.81} \\ 
  u1060 & 660 & 660 & . & . & . & . & 627 & 5.00 & 90.04 & 644 & 2.42 & 18000.00 & \textbf{660} & * & \textbf{660} & * & \textbf{16716.01} \\ 
  vm1084 & 777 & 777 & \textbf{777} & * & \textbf{777} & \textbf{4927.40} & 770 & 0.90 & 56.29 & 765 & 1.54 & 18000.00 & \textbf{777} & * & \textbf{777} & * & 5012.60 \\ 
  pcb1173 & 675 & 675 & . & . & . & . & 633 & 6.22 & 60.65 & 652 & 3.41 & 18000.00 & \textbf{675} & * & \textbf{675} & * & \textbf{6819.83} \\ 
  d1291 & 715 & 715 & . & . & . & . & 646 & 9.65 & 434.87 & 699 & 2.24 & 18000.00 & \textbf{715} & * & \textbf{715} & * & \textbf{7916.85} \\ 
  rl1304 & 802 & 802 & . & . & . & . & 766 & 4.49 & 102.45 & 788 & 1.75 & 18000.00 & \textbf{802} & * & \textbf{802} & * & \textbf{6269.39} \\ 
  rl1323 & 814 & 814 & 811 & 0.37 & 846 & 18000.00 & 782 & 3.93 & 89.68 & 785 & 3.56 & 14585.10 & \textbf{814} & * & \textbf{814} & * & \textbf{7740.17} \\ 
  nrw1379 & 815 & 817 & . & . & . & . & 771 & 5.40 & 106.97 & 790 & 3.07 & 18000.00 & \textbf{815} & * & \textbf{817} & 0.24 & \textbf{18000.00} \\ 
  fl1400 & 1048 & 1084 & 909 & 13.26 & 1230 & \textbf{18000.00} & 1043 & 0.48 & 518.25 & \textbf{1048} & * & 18000.00 & 1003 & 4.29 & \textbf{1084} & 7.47 & \textbf{18000.00} \\ 
  u1432 & 754 & 764 & . & . & . & . & 738 & 2.12 & 121.46 & 749 & 0.66 & 14573.50 & \textbf{754} & * & \textbf{764} & 1.31 & \textbf{18000.00} \\ 
  fl1577 & 897 & 900 & . & . & . & . & 880 & 1.90 & 286.47 & 748 & 16.61 & 18000.00 & \textbf{897} & * & \textbf{900} & 0.33 & \textbf{18000.00} \\ 
  d1655 & 922 & 924 & . & . & . & . & 846 & 8.24 & 757.70 & 890 & 3.47 & 18000.00 & \textbf{922} & * & \textbf{924} & 0.22 & \textbf{18000.00} \\ 
  vm1748 & 1276 & 1282 & 873 & 31.58 & . & \textbf{18000.00} & 1246 & 2.35 & 178.50 & 1252 & 1.88 & 16959.80 & \textbf{1276} & * & \textbf{1282} & 0.47 & \textbf{18000.00} \\ 
  u1817 & 983 & 983 & . & . & . & . & 879 & 10.58 & 975.58 & 947 & 3.66 & 18000.00 & \textbf{983} & * & \textbf{983} & * & \textbf{11226.88} \\ 
  rl1889 & 1226 & 1226 & 890 & 27.41 & 1296 & 18000.00 & 1167 & 4.81 & 269.81 & 1156 & 5.71 & 18000.00 & \textbf{1226} & * & \textbf{1226} & * & \textbf{17010.43} \\ 
  d2103 & 1200 & 1200 & . & . & . & . & 1069 & 10.92 & 951.27 & 1171 & 2.42 & 18000.00 & \textbf{1200} & * & \textbf{1200} & * & \textbf{15855.62} \\ 
  u2152 & 1151 & 1151 & . & . & . & . & 1048 & 8.95 & 1350.23 & 1111 & 3.48 & 18000.00 & \textbf{1151} & * & \textbf{1151} & * & \textbf{14703.25} \\ 
  u2319 & 1170 & 1171 & . & . & . & . & 1167 & 0.26 & 423.26 & \textbf{1170} & * & 6088.42 & \textbf{1170} & * & \textbf{1171} & 0.09 & \textbf{18000.00} \\ 
  pr2392 & 1316 & 1415 & 1140 & 13.37 & . & \textbf{18000.00} & 1292 & 1.82 & 402.29 & 1294 & 1.67 & 18000.00 & \textbf{1316} & * & \textbf{1415} & 7.00 & \textbf{18000.00} \\ 
  pcb3038 & 1727 & 1730 & . & . & . & . & 1572 & 8.98 & 681.94 & 1626 & 5.85 & 18000.00 & \textbf{1727} & * & \textbf{1730} & 0.17 & \textbf{18000.00} \\ 
  fl3795 & 1965 & 2249 & . & . & . & . & 1815 & 7.63 & 2994.90 & 1818 & 7.48 & 18000.00 & \textbf{1965} & * & \textbf{2249} & 12.63 & \textbf{18000.00} \\ 
  fnl4461 & 2541 & 2570 & . & . & . & . & 2350 & 7.52 & 2462.65 & 2342 & 7.83 & 18000.00 & \textbf{2541} & * & \textbf{2570} & 1.13 & \textbf{18000.00} \\ 
  rl5915 & 3593 & 3786 & . & . & . & . & 3358 & 6.54 & 5361.54 & 3328 & 7.38 & 18000.00 & \textbf{3593} & * & \textbf{3786} & 5.10 & \textbf{18000.00} \\ 
  rl5934 & 3632 & 3752 & . & . & . & . & 3145 & 13.41 & 5382.25 & 3276 & 9.80 & 18000.00 & \textbf{3632} & * & \textbf{3752} & 3.20 & \textbf{18000.00} \\ 
  pla7397 & 5289 & 5657 & . & . & . & . & 5141 & 2.80 & 15981.78 & 5140 & 2.82 & 18000.00 & \textbf{5289} & * & \textbf{5657} & 6.51 & \textbf{18000.00} \\ 
   \midrule\ average &  &  &  & 4.35 &  & \textbf{7433.41} &  & 4.32 & 990.82 &  & 3.12 & 11802.68 &  & 0.11 &  & 1.49 & 10387.02 \\ 
   \bottomrule\end{tabular}

    \end{scriptsize}
  \end{table}

  \begin{table}[p!]
    \centering
    \caption{Generation 2, $n\le 400$}
    \label{tab:best-II-2-medium}
    \begin{scriptsize}
\begin{tabular}{rrrrrrrrrrrrrrrrrr}
  \toprule \multicolumn{1}{c}{} &\multicolumn{2}{c}{Best} &
                        \multicolumn{4}{c}{FST} &
                        \multicolumn{3}{c}{EA4OP} &
                        \multicolumn{3}{c}{ALNS} &
                        \multicolumn{5}{c}{RB\&C} \\
                       \cmidrule(lr){2-3}\cmidrule(lr){4-7}\cmidrule(lr){8-10}\cmidrule(lr){11-13}\cmidrule(lr){14-18}\   Instance & LB & UB
                          & LB & GGap & UB & Time
                          & LB & GGap & Time
                          & LB & GGap & Time
                          & LB & GGap & UB & OGap & Time\\
                          \midrule att48 & 1717 & 1717 & \textbf{1717} & * & \textbf{1717} & \textbf{0.00} & \textbf{1717} & * & 0.32 & \textbf{1717} & * & 6.77 & \textbf{1717} & * & \textbf{1717} & * & \textbf{0.04} \\ 
  gr48 & 1761 & 1761 & \textbf{1761} & * & \textbf{1761} & \textbf{0.20} & 1749 & 0.68 & 0.20 & \textbf{1761} & * & 7.87 & \textbf{1761} & * & \textbf{1761} & * & 1.32 \\ 
  hk48 & 1614 & 1614 & \textbf{1614} & * & \textbf{1614} & \textbf{0.10} & \textbf{1614} & * & 0.15 & \textbf{1614} & * & 7.19 & \textbf{1614} & * & \textbf{1614} & * & \textbf{0.10} \\ 
  eil51 & 1674 & 1674 & \textbf{1674} & * & \textbf{1674} & \textbf{0.40} & 1668 & 0.36 & 0.18 & \textbf{1674} & * & 10.13 & \textbf{1674} & * & \textbf{1674} & * & 0.96 \\ 
  berlin52 & 1897 & 1897 & \textbf{1897} & * & \textbf{1897} & 93.40 & \textbf{1897} & * & 0.35 & \textbf{1897} & * & 10.74 & \textbf{1897} & * & \textbf{1897} & * & \textbf{3.23} \\ 
  brazil58 & 2220 & 2220 & \textbf{2220} & * & \textbf{2220} & \textbf{0.10} & 2218 & 0.09 & 1.52 & \textbf{2220} & * & 12.32 & \textbf{2220} & * & \textbf{2220} & * & 0.46 \\ 
  st70 & 2286 & 2286 & \textbf{2286} & * & \textbf{2286} & 19.40 & 2285 & 0.04 & 0.31 & \textbf{2286} & * & 21.65 & \textbf{2286} & * & \textbf{2286} & * & \textbf{1.77} \\ 
  eil76 & 2550 & 2550 & \textbf{2550} & * & \textbf{2550} & \textbf{0.10} & \textbf{2550} & * & 0.43 & \textbf{2550} & * & 16.06 & \textbf{2550} & * & \textbf{2550} & * & 0.62 \\ 
  pr76 & 2708 & 2708 & \textbf{2708} & * & \textbf{2708} & \textbf{0.40} & \textbf{2708} & * & 0.48 & \textbf{2708} & * & 19.48 & \textbf{2708} & * & \textbf{2708} & * & 1.46 \\ 
  gr96 & 3396 & 3396 & \textbf{3396} & * & \textbf{3396} & \textbf{1.70} & 3394 & 0.06 & 1.44 & 3394 & 0.06 & 31.98 & \textbf{3396} & * & \textbf{3396} & * & 9.50 \\ 
  rat99 & 2944 & 2944 & \textbf{2944} & * & \textbf{2944} & \textbf{0.90} & \textbf{2944} & * & 0.49 & \textbf{2944} & * & 32.08 & \textbf{2944} & * & \textbf{2944} & * & 3.25 \\ 
  kroA100 & 3212 & 3212 & \textbf{3212} & * & \textbf{3212} & 0.90 & \textbf{3212} & * & 0.57 & \textbf{3212} & * & 32.85 & \textbf{3212} & * & \textbf{3212} & * & \textbf{0.70} \\ 
  kroB100 & 3241 & 3241 & \textbf{3241} & * & \textbf{3241} & \textbf{6.70} & 3238 & 0.09 & 0.52 & 3239 & 0.06 & 48.39 & \textbf{3241} & * & \textbf{3241} & * & 13.28 \\ 
  kroC100 & 2947 & 2947 & \textbf{2947} & * & \textbf{2947} & 85.60 & 2931 & 0.54 & 0.60 & \textbf{2947} & * & 39.27 & \textbf{2947} & * & \textbf{2947} & * & \textbf{2.22} \\ 
  kroD100 & 3307 & 3307 & \textbf{3307} & * & \textbf{3307} & 45.00 & \textbf{3307} & * & 0.65 & \textbf{3307} & * & 30.52 & \textbf{3307} & * & \textbf{3307} & * & \textbf{3.62} \\ 
  kroE100 & 3090 & 3090 & \textbf{3090} & * & \textbf{3090} & 230.10 & 3082 & 0.26 & 0.50 & \textbf{3090} & * & 39.57 & \textbf{3090} & * & \textbf{3090} & * & \textbf{11.31} \\ 
  rd100 & 3359 & 3359 & \textbf{3359} & * & \textbf{3359} & \textbf{0.20} & \textbf{3359} & * & 0.50 & \textbf{3359} & * & 30.80 & \textbf{3359} & * & \textbf{3359} & * & 0.36 \\ 
  eil101 & 3655 & 3655 & \textbf{3655} & * & \textbf{3655} & 153.00 & \textbf{3655} & * & 0.82 & \textbf{3655} & * & 26.19 & \textbf{3655} & * & \textbf{3655} & * & \textbf{4.15} \\ 
  lin105 & 3544 & 3544 & \textbf{3544} & * & \textbf{3544} & 67.30 & 3530 & 0.40 & 1.10 & \textbf{3544} & * & 36.22 & \textbf{3544} & * & \textbf{3544} & * & \textbf{2.51} \\ 
  pr107 & 2667 & 2667 & \textbf{2667} & * & \textbf{2667} & 0.60 & \textbf{2667} & * & 1.05 & \textbf{2667} & * & 69.67 & \textbf{2667} & * & \textbf{2667} & * & \textbf{0.20} \\ 
  gr120 & 4371 & 4371 & \textbf{4371} & * & \textbf{4371} & 35.80 & 4356 & 0.34 & 1.37 & \textbf{4371} & * & 40.41 & \textbf{4371} & * & \textbf{4371} & * & \textbf{6.57} \\ 
  pr124 & 3917 & 3917 & \textbf{3917} & * & \textbf{3917} & \textbf{0.50} & 3899 & 0.46 & 1.34 & \textbf{3917} & * & 55.25 & \textbf{3917} & * & \textbf{3917} & * & 1.07 \\ 
  bier127 & 5383 & 5383 & \textbf{5383} & * & \textbf{5383} & 58.80 & 5381 & 0.04 & 1.71 & 5366 & 0.32 & 23.01 & \textbf{5383} & * & \textbf{5383} & * & \textbf{0.96} \\ 
  pr136 & 4309 & 4309 & \textbf{4309} & * & \textbf{4309} & 2.10 & \textbf{4309} & * & 1.15 & \textbf{4309} & * & 35.63 & \textbf{4309} & * & \textbf{4309} & * & \textbf{1.25} \\ 
  gr137 & 4286 & 4286 & \textbf{4286} & * & \textbf{4286} & 196.90 & 4099 & 4.36 & 3.09 & \textbf{4286} & * & 639.80 & \textbf{4286} & * & \textbf{4286} & * & \textbf{10.65} \\ 
  pr144 & 4003 & 4003 & \textbf{4003} & * & \textbf{4003} & 90.40 & 3965 & 0.95 & 3.02 & 3969 & 0.85 & 100.20 & \textbf{4003} & * & \textbf{4003} & * & \textbf{32.23} \\ 
  kroA150 & 4918 & 4918 & \textbf{4918} & * & \textbf{4918} & 241.40 & 4902 & 0.33 & 1.26 & \textbf{4918} & * & 80.06 & \textbf{4918} & * & \textbf{4918} & * & \textbf{60.43} \\ 
  kroB150 & 4869 & 4869 & \textbf{4869} & * & \textbf{4869} & 24.80 & \textbf{4869} & * & 1.19 & \textbf{4869} & * & 61.96 & \textbf{4869} & * & \textbf{4869} & * & \textbf{16.94} \\ 
  pr152 & 4279 & 4279 & \textbf{4279} & * & \textbf{4279} & 2.20 & 4245 & 0.79 & 3.47 & \textbf{4279} & * & 67.41 & \textbf{4279} & * & \textbf{4279} & * & \textbf{1.85} \\ 
  u159 & 4960 & 4960 & \textbf{4960} & * & \textbf{4960} & 192.20 & 4941 & 0.38 & 1.44 & 4950 & 0.20 & 109.59 & \textbf{4960} & * & \textbf{4960} & * & \textbf{14.96} \\ 
  rat195 & 5791 & 5791 & \textbf{5791} & * & \textbf{5791} & 128.80 & 5703 & 1.52 & 1.55 & 5782 & 0.16 & 263.23 & \textbf{5791} & * & \textbf{5791} & * & \textbf{46.09} \\ 
  d198 & 6670 & 6670 & \textbf{6670} & * & \textbf{6670} & \textbf{74.20} & 6660 & 0.15 & 7.33 & 6661 & 0.13 & 88.47 & \textbf{6670} & * & \textbf{6670} & * & 298.24 \\ 
  kroA200 & 6547 & 6547 & \textbf{6547} & * & \textbf{6547} & 68.70 & 6534 & 0.20 & 1.71 & \textbf{6547} & * & 116.11 & \textbf{6547} & * & \textbf{6547} & * & \textbf{16.18} \\ 
  kroB200 & 6419 & 6419 & \textbf{6419} & * & \textbf{6419} & 34.70 & 6278 & 2.20 & 1.97 & 6413 & 0.09 & 189.98 & \textbf{6419} & * & \textbf{6419} & * & \textbf{20.62} \\ 
  gr202 & 7789 & 7789 & \textbf{7789} & * & \textbf{7789} & \textbf{85.70} & \textbf{7789} & * & 8.77 & 7719 & 0.90 & 188.27 & \textbf{7789} & * & \textbf{7789} & * & 139.90 \\ 
  ts225 & 6834 & 6834 & \textbf{6834} & * & \textbf{6834} & \textbf{6.60} & 6819 & 0.22 & 1.47 & 6782 & 0.76 & 394.00 & \textbf{6834} & * & \textbf{6834} & * & 95.22 \\ 
  tsp225 & 6987 & 6987 & \textbf{6987} & * & \textbf{6987} & 174.50 & 6936 & 0.73 & 1.87 & 6980 & 0.10 & 299.73 & \textbf{6987} & * & \textbf{6987} & * & \textbf{54.09} \\ 
  pr226 & 6662 & 6662 & \textbf{6662} & * & \textbf{6662} & \textbf{74.10} & 6658 & 0.06 & 7.29 & \textbf{6662} & * & 201.68 & \textbf{6662} & * & \textbf{6662} & * & 2894.81 \\ 
  gr229 & 9177 & 9177 & \textbf{9177} & * & \textbf{9177} & 182.60 & 9174 & 0.03 & 13.19 & \textbf{9177} & * & 1379.35 & \textbf{9177} & * & \textbf{9177} & * & \textbf{16.67} \\ 
  gil262 & 8321 & 8321 & \textbf{8321} & * & \textbf{8321} & 89.60 & 8175 & 1.75 & 3.47 & 8269 & 0.62 & 487.41 & \textbf{8321} & * & \textbf{8321} & * & \textbf{64.63} \\ 
  pr264 & 6654 & 6654 & \textbf{6654} & * & \textbf{6654} & 23.00 & 6173 & 7.23 & 5.94 & \textbf{6654} & * & 314.27 & \textbf{6654} & * & \textbf{6654} & * & \textbf{13.33} \\ 
  a280 & 8428 & 8428 & \textbf{8428} & * & \textbf{8428} & \textbf{103.80} & 8304 & 1.47 & 2.85 & 8404 & 0.28 & 215.31 & \textbf{8428} & * & \textbf{8428} & * & 519.95 \\ 
  pr299 & 9182 & 9182 & \textbf{9182} & * & \textbf{9182} & \textbf{426.50} & 9112 & 0.76 & 3.23 & 9147 & 0.38 & 393.12 & \textbf{9182} & * & \textbf{9182} & * & 623.34 \\ 
  lin318 & 10923 & 10923 & \textbf{10923} & * & \textbf{10923} & 862.40 & 10866 & 0.52 & 8.29 & 10801 & 1.12 & 370.64 & \textbf{10923} & * & \textbf{10923} & * & \textbf{367.53} \\ 
  rd400 & 13652 & 13652 & \textbf{13652} & * & \textbf{13652} & \textbf{293.50} & 13442 & 1.54 & 6.80 & 13562 & 0.66 & 1174.91 & \textbf{13652} & * & \textbf{13652} & * & 769.66 \\ 
   \midrule\ average &  &  &  & * &  & \textbf{92.89} &  & 0.63 & 2.38 &  & 0.15 & 173.77 &  & * &  & * & 136.63 \\ 
   \bottomrule\end{tabular}

    \end{scriptsize}
  \end{table}

  \begin{table}[p!]
    \centering
    \caption{Generation 2, $n>400$}
    \label{tab:best-II-2-big}
    \begin{scriptsize}
\begin{tabular}{rrrrrrrrrrrrrrrrrr}
  \toprule \multicolumn{1}{c}{} &\multicolumn{2}{c}{Best} &
                        \multicolumn{4}{c}{FST} &
                        \multicolumn{3}{c}{EA4OP} &
                        \multicolumn{3}{c}{ALNS} &
                        \multicolumn{5}{c}{RB\&C} \\
                       \cmidrule(lr){2-3}\cmidrule(lr){4-7}\cmidrule(lr){8-10}\cmidrule(lr){11-13}\cmidrule(lr){14-18}\   Instance & LB & UB
                          & LB & GGap & UB & Time
                          & LB & GGap & Time
                          & LB & GGap & Time
                          & LB & GGap & UB & OGap & Time\\
                          \midrule fl417 & 11933 & 12294 & 11894 & 0.33 & \textbf{12294} & \textbf{18000.00} & 11787 & 1.22 & 16.73 & 11923 & 0.08 & 2144.94 & \textbf{11933} & * & 12387 & 3.67 & \textbf{18000.00} \\ 
  gr431 & 18318 & 18318 & \textbf{18318} & * & \textbf{18318} & \textbf{969.50} & 18287 & 0.17 & 51.38 & \textbf{18318} & * & 2740.82 & \textbf{18318} & * & \textbf{18318} & * & 2809.41 \\ 
  pr439 & 16171 & 16171 & \textbf{16171} & * & \textbf{16171} & \textbf{1298.30} & 16085 & 0.53 & 11.77 & 16128 & 0.27 & 629.44 & \textbf{16171} & * & \textbf{16171} & * & 3765.86 \\ 
  pcb442 & 14484 & 14484 & \textbf{14484} & * & \textbf{14484} & \textbf{6259.10} & 14273 & 1.46 & 6.83 & 14411 & 0.50 & 4410.74 & \textbf{14484} & * & \textbf{14484} & * & 13760.94 \\ 
  d493 & 16995 & 17007 & . & . & . & . & 16729 & 1.57 & 17.15 & 16820 & 1.03 & 6231.42 & \textbf{16995} & * & \textbf{17007} & 0.07 & \textbf{18000.00} \\ 
  att532 & 19635 & 19800 & 19598 & 0.19 & \textbf{19800} & \textbf{18000.00} & 19265 & 1.88 & 23.43 & 19465 & 0.87 & 1564.89 & \textbf{19635} & * & \textbf{19800} & 0.83 & \textbf{18000.00} \\ 
  ali535 & 21954 & 21954 & \textbf{21954} & * & \textbf{21954} & \textbf{2099.70} & 21910 & 0.20 & 95.05 & 21761 & 0.88 & 1537.87 & \textbf{21954} & * & 21973 & 0.09 & 18000.00 \\ 
  pa561 & 19576 & 19576 & \textbf{19576} & * & \textbf{19576} & \textbf{1487.10} & 18894 & 3.48 & 23.45 & 19092 & 2.47 & 790.31 & \textbf{19576} & * & \textbf{19576} & * & 1961.95 \\ 
  u574 & 19351 & 19351 & \textbf{19351} & * & \textbf{19351} & \textbf{612.50} & 18966 & 1.99 & 16.33 & 19028 & 1.67 & 5389.10 & \textbf{19351} & * & \textbf{19351} & * & 1026.82 \\ 
  rat575 & 18251 & 18251 & \textbf{18251} & * & \textbf{18251} & \textbf{931.10} & 17705 & 2.99 & 14.97 & 17984 & 1.46 & 2089.02 & \textbf{18251} & * & \textbf{18251} & * & 9616.70 \\ 
  p654 & 17900 & 21566 & 17160 & 4.13 & \textbf{21566} & \textbf{18000.00} & 17821 & 0.44 & 42.82 & \textbf{17900} & * & 18000.00 & 17753 & 0.82 & 22248 & 20.20 & \textbf{18000.00} \\ 
  d657 & 21503 & 21503 & \textbf{21503} & * & \textbf{21503} & 2682.40 & 21162 & 1.59 & 22.90 & 21231 & 1.26 & 4161.44 & \textbf{21503} & * & \textbf{21503} & * & \textbf{554.67} \\ 
  gr666 & 26514 & 26569 & . & . & . & . & 26336 & 0.67 & 136.48 & 25971 & 2.05 & 1024.22 & \textbf{26514} & * & \textbf{26569} & 0.21 & \textbf{18000.00} \\ 
  u724 & 24223 & 24223 & \textbf{24223} & * & \textbf{24223} & \textbf{5830.50} & 23793 & 1.78 & 28.71 & 23878 & 1.42 & 5755.06 & \textbf{24223} & * & \textbf{24223} & * & 9829.42 \\ 
  rat783 & 25474 & 25474 & . & . & . & . & 24861 & 2.41 & 32.36 & 24987 & 1.91 & 6622.62 & \textbf{25474} & * & \textbf{25474} & * & \textbf{12246.90} \\ 
  dsj1000 & 35835 & 35915 & 35772 & 0.18 & 35917 & \textbf{18000.00} & 34463 & 3.83 & 83.34 & 34641 & 3.33 & 18000.00 & \textbf{35835} & * & \textbf{35915} & 0.22 & \textbf{18000.00} \\ 
  pr1002 & 33030 & 33092 & 27066 & 18.06 & . & \textbf{18000.00} & 31746 & 3.89 & 46.19 & 32120 & 2.76 & 18000.00 & \textbf{33030} & * & \textbf{33092} & 0.19 & \textbf{18000.00} \\ 
  u1060 & 36151 & 36291 & . & . & . & . & 35110 & 2.88 & 77.78 & 35284 & 2.40 & 18000.00 & \textbf{36151} & * & \textbf{36291} & 0.39 & \textbf{18000.00} \\ 
  vm1084 & 40777 & 40952 & 40687 & 0.22 & 40954 & \textbf{18000.00} & 40308 & 1.15 & 55.67 & 40240 & 1.32 & 18000.00 & \textbf{40777} & * & \textbf{40952} & 0.43 & \textbf{18000.00} \\ 
  pcb1173 & 37035 & 37100 & . & . & . & . & 35826 & 3.26 & 69.94 & 35946 & 2.94 & 18000.00 & \textbf{37035} & * & \textbf{37100} & 0.18 & \textbf{18000.00} \\ 
  d1291 & 37778 & 37854 & . & . & . & . & 35153 & 6.95 & 289.25 & 36815 & 2.55 & 18000.00 & \textbf{37778} & * & \textbf{37854} & 0.20 & \textbf{18000.00} \\ 
  rl1304 & 42275 & 42359 & . & . & . & . & 40561 & 4.05 & 97.68 & 40893 & 3.27 & 12853.40 & \textbf{42275} & * & \textbf{42359} & 0.20 & \textbf{18000.00} \\ 
  rl1323 & 43377 & 43450 & 43347 & 0.07 & \textbf{43450} & \textbf{18000.00} & 41459 & 4.42 & 89.78 & 41210 & 5.00 & 18000.00 & \textbf{43377} & * & \textbf{43450} & 0.17 & \textbf{18000.00} \\ 
  nrw1379 & 46676 & 46787 & . & . & . & . & 45602 & 2.30 & 117.51 & 45576 & 2.36 & 18000.00 & \textbf{46676} & * & \textbf{46787} & 0.24 & \textbf{18000.00} \\ 
  fl1400 & 56692 & 64298 & 53222 & 6.12 & 64726 & \textbf{18000.00} & 56258 & 0.77 & 794.15 & \textbf{56692} & * & 18000.00 & 54124 & 4.53 & \textbf{64298} & 15.82 & \textbf{18000.00} \\ 
  u1432 & 46946 & 47018 & . & . & . & . & 44810 & 4.55 & 100.91 & 44982 & 4.18 & 18000.00 & \textbf{46946} & * & \textbf{47018} & 0.15 & \textbf{18000.00} \\ 
  fl1577 & 45505 & 50154 & . & . & . & . & \textbf{45505} & * & 334.28 & 41148 & 9.57 & 18000.00 & 45326 & 0.39 & \textbf{50154} & 9.63 & \textbf{18000.00} \\ 
  d1655 & 49319 & 53083 & . & . & . & . & 47211 & 4.27 & 683.17 & \textbf{49319} & * & 18000.00 & 46158 & 6.41 & \textbf{53083} & 13.05 & \textbf{18000.00} \\ 
  vm1748 & 68042 & 68303 & . & . & . & . & 66685 & 1.99 & 195.85 & 66636 & 2.07 & 18000.00 & \textbf{68042} & * & \textbf{68303} & 0.38 & \textbf{18000.00} \\ 
  u1817 & 54245 & 54554 & . & . & . & . & 50366 & 7.15 & 734.39 & 51676 & 4.74 & 18000.00 & \textbf{54245} & * & \textbf{54554} & 0.57 & \textbf{18000.00} \\ 
  rl1889 & 63308 & 64425 & 52047 & 17.79 & . & \textbf{18000.00} & 60084 & 5.09 & 286.07 & 60928 & 3.76 & 18000.00 & \textbf{63308} & * & \textbf{64425} & 1.73 & \textbf{18000.00} \\ 
  d2103 & 63426 & 63426 & . & . & . & . & 57202 & 9.81 & 682.28 & 61636 & 2.82 & 18000.00 & \textbf{63426} & * & \textbf{63426} & * & \textbf{16593.51} \\ 
  u2152 & 64649 & 64775 & 53976 & 16.51 & . & \textbf{18000.00} & 60211 & 6.86 & 1164.38 & 61052 & 5.56 & 18000.00 & \textbf{64649} & * & \textbf{64775} & 0.19 & \textbf{18000.00} \\ 
  u2319 & 80914 & 81139 & 72790 & 10.04 & . & \textbf{18000.00} & 78102 & 3.48 & 447.06 & 77610 & 4.08 & 18000.00 & \textbf{80914} & * & \textbf{81139} & 0.28 & \textbf{18000.00} \\ 
  pr2392 & 72843 & 78237 & 64577 & 11.35 & . & \textbf{18000.00} & 71018 & 2.51 & 440.57 & 71851 & 1.36 & 18000.00 & \textbf{72843} & * & \textbf{78237} & 6.89 & \textbf{18000.00} \\ 
  pcb3038 & 97902 & 97995 & 83951 & 14.25 & . & \textbf{18000.00} & 91842 & 6.19 & 820.37 & 91457 & 6.58 & 18000.00 & \textbf{97902} & * & \textbf{97995} & 0.09 & \textbf{18000.00} \\ 
  fl3795 & 103397 & 142895 & . & . & . & . & \textbf{103397} & * & 4788.96 & 102642 & 0.73 & 18000.00 & 98998 & 4.25 & \textbf{142895} & 30.72 & \textbf{18000.00} \\ 
  fnl4461 & 147109 & 150189 & . & . & . & . & 140424 & 4.54 & 2618.15 & 135515 & 7.88 & 18000.00 & \textbf{147109} & * & \textbf{150189} & 2.05 & \textbf{18000.00} \\ 
  rl5915 & 184424 & 197729 & . & . & . & . & 176678 & 4.20 & 5512.40 & 173500 & 5.92 & 18000.00 & \textbf{184424} & * & \textbf{197729} & 6.73 & \textbf{18000.00} \\ 
  rl5934 & 187034 & 196805 & . & . & . & . & 171649 & 8.23 & 5757.80 & 166368 & 11.05 & 18000.00 & \textbf{187034} & * & \textbf{196805} & 4.96 & \textbf{18000.00} \\ 
  pla7397 & 281977 & 297246 & . & . & . & . & 272452 & 3.38 & 18000.00 & 266038 & 5.65 & 18000.00 & \textbf{281977} & * & \textbf{297246} & 5.14 & \textbf{18000.00} \\ 
   \midrule\ average &  &  &  & 4.51 &  & \textbf{11644.10} &  & 3.13 & 1093.37 &  & 2.87 & 12827.93 &  & 0.40 &  & 3.06 & 15369.91 \\ 
   \bottomrule\end{tabular}

    \end{scriptsize}
  \end{table}

  \begin{table}[p!]
    \centering
    \caption{Generation 3, $n\le 400$}
    \label{tab:best-II-3-medium}
    \begin{scriptsize}

\begin{tabular}{rrrrrrrrrrrrrrrrrr}
  \toprule \multicolumn{1}{c}{} &\multicolumn{2}{c}{Best} &
                        \multicolumn{4}{c}{FST} &
                        \multicolumn{3}{c}{EA4OP} &
                        \multicolumn{3}{c}{ALNS} &
                        \multicolumn{5}{c}{RB\&C} \\
                       \cmidrule(lr){2-3}\cmidrule(lr){4-7}\cmidrule(lr){8-10}\cmidrule(lr){11-13}\cmidrule(lr){14-18}\   Instance & LB & UB
                          & LB & GGap & UB & Time
                          & LB & GGap & Time
                          & LB & GGap & Time
                          & LB & GGap & UB & OGap & Time\\
                          \midrule att48 & 1049 & 1049 & \textbf{1049} & * & \textbf{1049} & 38.50 & \textbf{1049} & * & 0.259 & \textbf{1049} & * & 7.18 & \textbf{1049} & * & \textbf{1049} & * & \textbf{1.17} \\ 
  gr48 & 1480 & 1480 & \textbf{1480} & * & \textbf{1480} & \textbf{0.20} & \textbf{1480} & * & 0.13 & \textbf{1480} & * & 8.87 & \textbf{1480} & * & \textbf{1480} & * & 0.72 \\ 
  hk48 & 1764 & 1764 & \textbf{1764} & * & \textbf{1764} & \textbf{0.00} & \textbf{1764} & * & 0.215 & \textbf{1764} & * & 8.51 & \textbf{1764} & * & \textbf{1764} & * & 0.06 \\ 
  eil51 & 1399 & 1399 & \textbf{1399} & * & \textbf{1399} & \textbf{0.20} & 1398 & 0.07 & 0.222 & \textbf{1399} & * & 6.87 & \textbf{1399} & * & \textbf{1399} & * & 1.46 \\ 
  berlin52 & 1036 & 1036 & \textbf{1036} & * & \textbf{1036} & 124.70 & 1034 & 0.19 & 0.637 & \textbf{1036} & * & 12.84 & \textbf{1036} & * & \textbf{1036} & * & \textbf{4.61} \\ 
  brazil58 & 1702 & 1702 & \textbf{1702} & * & \textbf{1702} & \textbf{0.00} & \textbf{1702} & * & 0.711 & \textbf{1702} & * & 11.09 & \textbf{1702} & * & \textbf{1702} & * & \textbf{0.02} \\ 
  st70 & 2108 & 2108 & \textbf{2108} & * & \textbf{2108} & \textbf{0.40} & \textbf{2108} & * & 0.308 & \textbf{2108} & * & 9.65 & \textbf{2108} & * & \textbf{2108} & * & 0.49 \\ 
  eil76 & 2467 & 2467 & \textbf{2467} & * & \textbf{2467} & \textbf{0.40} & \textbf{2467} & * & 0.362 & \textbf{2467} & * & 20.48 & \textbf{2467} & * & \textbf{2467} & * & 2.96 \\ 
  pr76 & 2430 & 2430 & \textbf{2430} & * & \textbf{2430} & \textbf{0.20} & \textbf{2430} & * & 0.568 & \textbf{2430} & * & 20.43 & \textbf{2430} & * & \textbf{2430} & * & 1.07 \\ 
  gr96 & 3170 & 3170 & \textbf{3170} & * & \textbf{3170} & 61.50 & 3166 & 0.13 & 1.408 & 3166 & 0.13 & 15.22 & \textbf{3170} & * & \textbf{3170} & * & \textbf{5.66} \\ 
  rat99 & 2908 & 2908 & \textbf{2908} & * & \textbf{2908} & 4.90 & - & - & - & - & - & - & \textbf{2908} & * & \textbf{2908} & * & \textbf{3.01} \\ 
  kroA100 & 3211 & 3211 & \textbf{3211} & * & \textbf{3211} & 63.30 & 3180 & 0.97 & 0.379 & \textbf{3211} & * & 32.31 & \textbf{3211} & * & \textbf{3211} & * & \textbf{1.81} \\ 
  kroB100 & 2804 & 2804 & \textbf{2804} & * & \textbf{2804} & 0.60 & 2785 & 0.68 & 0.51 & \textbf{2804} & * & 35.83 & \textbf{2804} & * & \textbf{2804} & * & \textbf{0.35} \\ 
  kroC100 & 3155 & 3155 & \textbf{3155} & * & \textbf{3155} & \textbf{1.50} & \textbf{3155} & * & 0.439 & \textbf{3155} & * & 34.67 & \textbf{3155} & * & \textbf{3155} & * & 1.82 \\ 
  kroD100 & 3167 & 3167 & \textbf{3167} & * & \textbf{3167} & 10.70 & 3141 & 0.82 & 0.58 & \textbf{3167} & * & 31.08 & \textbf{3167} & * & \textbf{3167} & * & \textbf{0.70} \\ 
  kroE100 & 3049 & 3049 & \textbf{3049} & * & \textbf{3049} & 1.50 & \textbf{3049} & * & 0.471 & \textbf{3049} & * & 31.96 & \textbf{3049} & * & \textbf{3049} & * & \textbf{1.36} \\ 
  rd100 & 2926 & 2926 & \textbf{2926} & * & \textbf{2926} & 113.20 & 2923 & 0.10 & 0.482 & \textbf{2926} & * & 16.35 & \textbf{2926} & * & \textbf{2926} & * & \textbf{23.20} \\ 
  eil101 & 3345 & 3345 & \textbf{3345} & * & \textbf{3345} & 29.80 & \textbf{3345} & * & 0.564 & \textbf{3345} & * & 28.61 & \textbf{3345} & * & \textbf{3345} & * & \textbf{1.37} \\ 
  lin105 & 2986 & 2986 & \textbf{2986} & * & \textbf{2986} & 51.90 & 2973 & 0.44 & 2.094 & \textbf{2986} & * & 38.24 & \textbf{2986} & * & \textbf{2986} & * & \textbf{16.02} \\ 
  pr107 & 1877 & 1877 & \textbf{1877} & * & \textbf{1877} & \textbf{660.90} & 1802 & 4.00 & 0.816 & \textbf{1877} & * & 65.16 & \textbf{1877} & * & \textbf{1877} & * & 3297.37 \\ 
  gr120 & 3779 & 3779 & \textbf{3779} & * & \textbf{3779} & \textbf{1.50} & 3748 & 0.82 & 1.358 & 3777 & 0.05 & 37.94 & \textbf{3779} & * & \textbf{3779} & * & 2.65 \\ 
  pr124 & 3557 & 3557 & \textbf{3557} & * & \textbf{3557} & \textbf{1021.50} & 3455 & 2.87 & 0.882 & \textbf{3557} & * & 99.87 & \textbf{3557} & * & \textbf{3557} & * & 4507.38 \\ 
  bier127 & 2365 & 2365 & \textbf{2365} & * & \textbf{2365} & 79.90 & 2361 & 0.17 & 2.619 & 2361 & 0.17 & 49.9 & \textbf{2365} & * & \textbf{2365} & * & \textbf{40.07} \\ 
  pr136 & 4390 & 4390 & \textbf{4390} & * & \textbf{4390} & 86.70 & \textbf{4390} & * & 1.126 & \textbf{4390} & * & 61.84 & \textbf{4390} & * & \textbf{4390} & * & \textbf{30.50} \\ 
  gr137 & 3954 & 3954 & \textbf{3954} & * & \textbf{3954} & \textbf{8.60} & \textbf{3954} & * & 1.884 & \textbf{3954} & * & 637.09 & \textbf{3954} & * & \textbf{3954} & * & 14.01 \\ 
  pr144 & 3745 & 3745 & \textbf{3745} & * & \textbf{3745} & \textbf{112.60} & 3700 & 1.20 & 2.411 & 3744 & 0.03 & 112.92 & \textbf{3745} & * & \textbf{3745} & * & 116.68 \\ 
  kroA150 & 5039 & 5039 & \textbf{5039} & * & \textbf{5039} & 330.70 & 5019 & 0.40 & 1.07 & 5037 & 0.04 & 104.23 & \textbf{5039} & * & \textbf{5039} & * & \textbf{46.43} \\ 
  kroB150 & 5314 & 5314 & \textbf{5314} & * & \textbf{5314} & 107.60 & \textbf{5314} & * & 1.044 & \textbf{5314} & * & 63.05 & \textbf{5314} & * & \textbf{5314} & * & \textbf{28.53} \\ 
  pr152 & 3905 & 3905 & \textbf{3905} & * & \textbf{3905} & 1122.40 & 3902 & 0.08 & 3.625 & 3539 & 9.37 & 184.38 & \textbf{3905} & * & \textbf{3905} & * & \textbf{83.51} \\ 
  u159 & 5272 & 5272 & \textbf{5272} & * & \textbf{5272} & 52.20 & \textbf{5272} & * & 0.945 & \textbf{5272} & * & 94.27 & \textbf{5272} & * & \textbf{5272} & * & \textbf{8.59} \\ 
  rat195 & 6195 & 6195 & \textbf{6195} & * & \textbf{6195} & 49.90 & - & - & - & - & - & - & \textbf{6195} & * & \textbf{6195} & * & \textbf{33.56} \\ 
  d198 & 6320 & 6320 & \textbf{6320} & * & \textbf{6320} & \textbf{286.10} & 6290 & 0.47 & 7.145 & \textbf{6320} & * & 105.7 & \textbf{6320} & * & \textbf{6320} & * & 461.18 \\ 
  kroA200 & 6123 & 6123 & \textbf{6123} & * & \textbf{6123} & 122.30 & 6114 & 0.15 & 1.717 & 6118 & 0.08 & 232.2 & \textbf{6123} & * & \textbf{6123} & * & \textbf{92.41} \\ 
  kroB200 & 6266 & 6266 & \textbf{6266} & * & \textbf{6266} & 40.10 & 6213 & 0.85 & 1.775 & \textbf{6266} & * & 188.77 & \textbf{6266} & * & \textbf{6266} & * & \textbf{3.87} \\ 
  gr202 & 8616 & 8616 & \textbf{8616} & * & \textbf{8616} & \textbf{224.80} & 8605 & 0.13 & 10.452 & 8564 & 0.60 & 57.88 & \textbf{8616} & * & \textbf{8616} & * & 315.26 \\ 
  ts225 & 7575 & 7575 & \textbf{7575} & * & \textbf{7575} & 171.20 & \textbf{7575} & * & 1.136 & \textbf{7575} & * & 450.25 & \textbf{7575} & * & \textbf{7575} & * & \textbf{6.62} \\ 
  tsp225 & 7740 & 7740 & \textbf{7740} & * & \textbf{7740} & 150.30 & - & - & - & - & - & - & \textbf{7740} & * & \textbf{7740} & * & \textbf{38.61} \\ 
  pr226 & 6993 & 6993 & \textbf{6993} & * & \textbf{6993} & \textbf{32.60} & 6908 & 1.22 & 8.013 & \textbf{6993} & * & 177.59 & \textbf{6993} & * & \textbf{6993} & * & 1170.00 \\ 
  gr229 & 6328 & 6328 & \textbf{6328} & * & \textbf{6328} & \textbf{10.20} & 6297 & 0.49 & 11.655 & \textbf{6328} & * & 1298.8 & \textbf{6328} & * & \textbf{6328} & * & 42.63 \\ 
  gil262 & 9246 & 9246 & \textbf{9246} & * & \textbf{9246} & 133.40 & 9094 & 1.64 & 3.937 & 9210 & 0.39 & 649.54 & \textbf{9246} & * & \textbf{9246} & * & \textbf{83.29} \\ 
  pr264 & 8137 & 8137 & \textbf{8137} & * & \textbf{8137} & \textbf{20.70} & 8068 & 0.85 & 3.625 & \textbf{8137} & * & 357.8 & \textbf{8137} & * & \textbf{8137} & * & 186.59 \\ 
  a280 & 9774 & 9774 & \textbf{9774} & * & \textbf{9774} & 213.30 & 8684 & 11.15 & 3.22 & 8789 & 10.08 & 378.8 & \textbf{9774} & * & \textbf{9774} & * & \textbf{126.80} \\ 
  pr299 & 10343 & 10343 & \textbf{10343} & * & \textbf{10343} & \textbf{363.60} & 9959 & 3.71 & 3.952 & 10233 & 1.06 & 549.11 & \textbf{10343} & * & \textbf{10343} & * & 913.13 \\ 
  lin318 & 10368 & 10368 & \textbf{10368} & * & \textbf{10368} & 534.80 & 10273 & 0.92 & 6.327 & 10337 & 0.30 & 528.2 & \textbf{10368} & * & \textbf{10368} & * & \textbf{327.58} \\ 
  rd400 & 13223 & 13223 & \textbf{13223} & * & \textbf{13223} & 293.20 & 13088 & 1.02 & 7.738 & 13122 & 0.76 & 727.58 & \textbf{13223} & * & \textbf{13223} & * & \textbf{214.40} \\ 
   \midrule\ average &  &  &  & * &  & \textbf{149.66} &  & 0.85 & 2.35 &  & 0.55 & 180.55 &  & * &  & * & 272.43 \\ 
   \bottomrule\end{tabular}

    \end{scriptsize}
  \end{table}

  \begin{table}[p!]
    \centering
    \caption{Generation 3, $n>400$}
    \label{tab:best-II-3-big}
    \begin{scriptsize}
\begin{tabular}{rrrrrrrrrrrrrrrrrr}
  \toprule \multicolumn{1}{c}{} &\multicolumn{2}{c}{Best} &
                        \multicolumn{4}{c}{FST} &
                        \multicolumn{3}{c}{EA4OP} &
                        \multicolumn{3}{c}{ALNS} &
                        \multicolumn{5}{c}{RB\&C} \\
                       \cmidrule(lr){2-3}\cmidrule(lr){4-7}\cmidrule(lr){8-10}\cmidrule(lr){11-13}\cmidrule(lr){14-18}\   Instance & LB & UB
                          & LB & GGap & UB & Time
                          & LB & GGap & Time
                          & LB & GGap & Time
                          & LB & GGap & UB & OGap & Time\\
                          \midrule fl417 & 14220 & 14220 & \textbf{14220} & * & \textbf{14220} & \textbf{6227.60} & 14186 & 0.24 & 12.449 & \textbf{14220} & * & 1131.05 & 14219 & 0.01 & 14387 & 1.17 & 18000.00 \\ 
  gr431 & 10911 & 10911 & \textbf{10911} & * & \textbf{10911} & \textbf{1046.90} & 10817 & 0.86 & 54.504 & 10907 & 0.04 & 2411.45 & \textbf{10911} & * & \textbf{10911} & * & 7814.17 \\ 
  pr439 & 15176 & 15296 & 15160 & 0.11 & \textbf{15296} & \textbf{18000.00} & 15097 & 0.52 & 10.96 & 15080 & 0.63 & 1328.74 & \textbf{15176} & * & 15331 & 1.01 & \textbf{18000.00} \\ 
  pcb442 & 14819 & 14819 & \textbf{14819} & * & 14839 & 18000.00 & 14522 & 2.00 & 6.578 & 14695 & 0.84 & 1192.19 & \textbf{14819} & * & \textbf{14819} & * & \textbf{11574.76} \\ 
  d493 & 25167 & 25188 & \textbf{25167} & * & \textbf{25188} & \textbf{18000.00} & 24981 & 0.74 & 19.182 & 24849 & 1.26 & 3829.32 & \textbf{25167} & * & 25195 & 0.11 & \textbf{18000.00} \\ 
  att532 & 15498 & 15498 & \textbf{15498} & * & \textbf{15498} & 933.20 & 15342 & 1.01 & 22.747 & 15335 & 1.05 & 4533.36 & \textbf{15498} & * & \textbf{15498} & * & \textbf{318.44} \\ 
  ali535 & 9414 & 9472 & . & . & . & . & 9328 & 0.91 & 94.089 & 9308 & 1.13 & 13313.5 & \textbf{9414} & * & \textbf{9472} & 0.61 & \textbf{18000.00} \\ 
  pa561 & 14482 & 14482 & \textbf{14482} & * & \textbf{14482} & 10543.80 & - & - & - & - & - & - & \textbf{14482} & * & \textbf{14482} & * & \textbf{2539.41} \\ 
  u574 & 20064 & 20064 & \textbf{20064} & * & \textbf{20064} & \textbf{1409.30} & 19691 & 1.86 & 19.766 & 19841 & 1.11 & 1671.01 & \textbf{20064} & * & \textbf{20064} & * & 2693.59 \\ 
  rat575 & 20109 & 20109 & \textbf{20109} & * & \textbf{20109} & 1426.50 & - & - & - & - & - & - & \textbf{20109} & * & \textbf{20109} & * & \textbf{929.99} \\ 
  p654 & 24492 & 24518 & \textbf{24492} & * & 31914 & \textbf{18000.00} & 24130 & 1.48 & 18.541 & 24427 & 0.27 & 7543.02 & \textbf{24492} & * & \textbf{24518} & 0.11 & \textbf{18000.00} \\ 
  d657 & 24562 & 24562 & \textbf{24562} & * & \textbf{24562} & \textbf{4053.30} & 23772 & 3.22 & 21.887 & 23829 & 2.98 & 4600.87 & \textbf{24562} & * & \textbf{24562} & * & 8777.39 \\ 
  gr666 & 17023 & 17048 & 17020 & 0.02 & \textbf{17048} & \textbf{18000.00} & 16902 & 0.71 & 143.868 & 16709 & 1.84 & 2734.75 & \textbf{17023} & * & 17060 & 0.22 & \textbf{18000.00} \\ 
  u724 & 28348 & 28348 & \textbf{28348} & * & \textbf{28348} & \textbf{5870.60} & 27932 & 1.47 & 29.263 & 28033 & 1.11 & 12058.6 & \textbf{28348} & * & \textbf{28348} & * & 10332.54 \\ 
  rat783 & 27566 & 27566 & \textbf{27566} & * & \textbf{27566} & 7232.30 & - & - & - & - & - & - & \textbf{27566} & * & \textbf{27566} & * & \textbf{3812.98} \\ 
  dsj1000 & 31434 & 31454 & . & . & . & . & 30943 & 1.56 & 79.179 & 31040 & 1.25 & 15962 & \textbf{31434} & * & \textbf{31454} & 0.06 & \textbf{18000.00} \\ 
  pr1002 & 39526 & 39526 & 39449 & 0.19 & 39545 & 18000.00 & 38762 & 1.93 & 47.303 & 38502 & 2.59 & 18000 & \textbf{39526} & * & \textbf{39526} & * & \textbf{13955.69} \\ 
  u1060 & 37492 & 37569 & . & . & . & . & 36570 & 2.46 & 75.876 & 36598 & 2.38 & 18000 & \textbf{37492} & * & \textbf{37569} & 0.20 & \textbf{18000.00} \\ 
  vm1084 & 37669 & 37669 & 37653 & 0.04 & 37694 & 18000.00 & 37508 & 0.43 & 54.207 & 37178 & 1.30 & 3286.89 & \textbf{37669} & * & \textbf{37669} & * & \textbf{8710.50} \\ 
  pcb1173 & 41257 & 41257 & . & . & . & . & 40069 & 2.88 & 66.158 & 40513 & 1.80 & 18000 & \textbf{41257} & * & \textbf{41257} & * & \textbf{15133.74} \\ 
  d1291 & 41509 & 42153 & 30106 & 27.47 & . & \textbf{18000.00} & 38132 & 8.14 & 299.865 & 39919 & 3.83 & 18000 & \textbf{41509} & * & \textbf{42153} & 1.53 & \textbf{18000.00} \\ 
  rl1304 & 41881 & 42075 & 40478 & 3.35 & . & \textbf{18000.00} & 41214 & 1.59 & 81.109 & 41679 & 0.48 & 18000 & \textbf{41881} & * & \textbf{42075} & 0.46 & \textbf{18000.00} \\ 
  rl1323 & 47213 & 47384 & 44458 & 5.84 & . & \textbf{18000.00} & 46641 & 1.21 & 93.526 & 45500 & 3.63 & 8544.44 & \textbf{47213} & * & \textbf{47384} & 0.36 & \textbf{18000.00} \\ 
  nrw1379 & 42920 & 42975 & . & . & . & . & - & - & - & - & - & - & \textbf{42920} & * & \textbf{42975} & 0.13 & \textbf{18000.00} \\ 
  fl1400 & 57470 & 59491 & 54792 & 4.66 & 67053 & \textbf{18000.00} & 57226 & 0.42 & 599.811 & \textbf{57470} & * & 18000 & 54661 & 4.89 & \textbf{59491} & 8.12 & \textbf{18000.00} \\ 
  u1432 & 47778 & 47895 & . & . & . & . & 46657 & 2.35 & 138.016 & 47242 & 1.12 & 18000 & \textbf{47778} & * & \textbf{47895} & 0.24 & \textbf{18000.00} \\ 
  fl1577 & 45935 & 48809 & . & . & . & . & 45692 & 0.53 & 295.615 & \textbf{45935} & * & 18000 & 45768 & 0.36 & \textbf{48809} & 6.23 & \textbf{18000.00} \\ 
  d1655 & 62048 & 62945 & 51168 & 17.53 & . & \textbf{18000.00} & 58728 & 5.35 & 674.247 & 60956 & 1.76 & 18000 & \textbf{62048} & * & \textbf{62945} & 1.43 & \textbf{18000.00} \\ 
  vm1748 & 71885 & 72010 & 68979 & 4.04 & . & \textbf{18000.00} & 70958 & 1.29 & 225.29 & 71244 & 0.89 & 18000 & \textbf{71885} & * & \textbf{72010} & 0.17 & \textbf{18000.00} \\ 
  u1817 & 63639 & 67670 & 52186 & 18.00 & . & \textbf{18000.00} & \textbf{63639} & * & 1302.347 & 63016 & 0.98 & 18000 & 63618 & 0.03 & \textbf{67670} & 5.99 & \textbf{18000.00} \\ 
  rl1889 & 70065 & 71106 & 43374 & 38.09 & . & \textbf{18000.00} & 68422 & 2.34 & 244.973 & 68096 & 2.81 & 18000 & \textbf{70065} & * & \textbf{71106} & 1.46 & \textbf{18000.00} \\ 
  d2103 & 82787 & 82973 & 76035 & 8.16 & . & \textbf{18000.00} & 77333 & 6.59 & 1168.899 & 81081 & 2.06 & 18000 & \textbf{82787} & * & \textbf{82973} & 0.22 & \textbf{18000.00} \\ 
  u2152 & 74007 & 78066 & 52091 & 29.61 & . & \textbf{18000.00} & 73400 & 0.82 & 1619.609 & 72733 & 1.72 & 18000 & \textbf{74007} & * & \textbf{78066} & 5.20 & \textbf{18000.00} \\ 
  u2319 & 79351 & 81050 & \textbf{79351} & * & 81619 & \textbf{18000.00} & 78113 & 1.56 & 569.758 & 79130 & 0.28 & 18000 & 79343 & 0.01 & \textbf{81050} & 2.11 & \textbf{18000.00} \\ 
  pr2392 & 85409 & 90261 & 60225 & 29.49 & . & \textbf{18000.00} & 84094 & 1.54 & 422.734 & 85084 & 0.38 & 18000 & \textbf{85409} & * & \textbf{90261} & 5.38 & \textbf{18000.00} \\ 
  pcb3038 & 106928 & 112006 & 96356 & 9.89 & . & \textbf{18000.00} & 104667 & 2.11 & 917.386 & 105337 & 1.49 & 18000 & \textbf{106928} & * & \textbf{112006} & 4.53 & \textbf{18000.00} \\ 
  fl3795 & 97707 & 116792 & . & . & . & . & \textbf{97707} & * & 3158.887 & 95580 & 2.18 & 18000 & 89218 & 8.69 & \textbf{116792} & 23.61 & \textbf{18000.00} \\ 
  fnl4461 & 146995 & 152562 & . & . & . & . & - & - & - & - & - & - & \textbf{146995} & * & \textbf{152562} & 3.65 & \textbf{18000.00} \\ 
  rl5915 & 203695 & 217366 & . & . & . & . & 199336 & 2.14 & 5593.23 & 201814 & 0.92 & 18000 & \textbf{203695} & * & \textbf{217366} & 6.29 & \textbf{18000.00} \\ 
  rl5934 & 212021 & 229405 & . & . & . & . & 207385 & 2.19 & 5881.87 & 203667 & 3.94 & 18000 & \textbf{212021} & * & \textbf{229405} & 7.58 & \textbf{18000.00} \\ 
  pla7397 & 322285 & 334885 & . & . & . & . & 320744 & 0.48 & 18000 & 312645 & 2.99 & 18000 & \textbf{322285} & * & \textbf{334885} & 3.76 & \textbf{18000.00} \\ 
   \midrule\ average &  &  &  & 6.78 &  & \textbf{13749.78} &  & 1.80 & 1168.44 &  & 1.47 & 12837.26 &  & 0.34 &  & 2.24 & 14843.74 \\ 
   \bottomrule\end{tabular}

    \end{scriptsize}
  \end{table}

\end{landscape}

\end{document}